\newtheorem{theorem}{Theorem}[section]
\newtheorem{lemma}[theorem]{Lemma}
\newtheorem{proposition}[theorem]{Proposition}
\newtheorem{corollary}[theorem]{Corollary}
\newcommand{\eqdef}{\stackrel{\scriptscriptstyle\rm def}{=}}
\DeclareMathOperator{\diam}{diam}
\DeclareMathOperator{\Per}{Per}
\DeclareMathOperator{\llangle}{\langle\hspace{-0.05cm}\langle}
\DeclareMathOperator{\rrangle}{\rangle\hspace{-0.05cm}\rangle}
\def\bR{\mathbb{R}}
\def\cK{\EuScript{K}}
\DeclareMathOperator{\Int}{Int}
\def\cP{\EuScript{P}}
\def\cR{\EuScript{R}}
\def\cT{\mathscr{T}}
\def\cO{\mathscr{O}}
\def\ccO{\EuScript{O}}
\def\cH{\EuScript{H}}
\def\cM{\EuScript{M}}
\def\cE{\mathcal{E}}
\def\cD{\mathcal{D}}
\def\cF{\mathcal{F}}
\def\cL{\EuScript{L}}
\DeclareMathSymbol{\varnothing}{\mathord}{AMSb}{"3F}
\renewcommand{\emptyset}{\varnothing}
\author[K.~Burns]{Keith Burns}
\address{Department of Mathematics, Northwestern University Evanston, IL 60208-2730, USA}
\email{burns@math.northwestern.edu}
\urladdr{}
\author[K.~Gelfert]{Katrin Gelfert} \address{Instituto de Matem\'atica, UFRJ,
Cidade Universit\'aria - Ilha do Fund\~ao, Rio de Janeiro 21945-909,  Brazil}
\email{gelfert@im.ufrj.br}
\begin{document}

\title[Thermodynamics for geodesic flows]{Thermodynamics for geodesic flows\\ of rank 1 surfaces}

\begin{abstract}
We investigate the spectrum of Lyapunov exponents for the geodesic flow of a compact rank 1 surface.
\end{abstract}

\begin{thanks}
{We would like to thank Todd Fisher, Micha\l\ Rams, and Rafael Ruggiero for discussions. KG has been supported by the Alexander von Humboldt Foundation Germany and by the CNPq Brazil. KB received support from NSF grant DMS-0701140 and Faperj Brazil.}
\end{thanks}

\keywords{}
\subjclass[2000]{Primary:%
%37E05, % Maps of the interval (piecewise continuous, continuous, smooth)
37D25, %Nonuniformly hyperbolic systems (Lyapunov exponents, Pesin theory, etc.)
37D35, % Thermodynamic formalism, variational principles, equilibrium states
28D20, % Entropy and other invariants
37C45 % Dimension theory of dynamical systems
%28D99 % Measure-theoretic ergodic theory
}
\maketitle
%\tableofcontents

%--------------------------------------------------------------------------------------------------
\section{Introduction}\label{s:1}
%--------------------------------------------------------------------------------------------------
In this paper we consider a $C^\infty$ compact connected surface $M$ of negative Euler characteristic equipped with a Riemannian metric of nonpositive curvature. Let ${G}=\{g^t\}_{t\in\bR}$  be the geodesic flow on the unit tangent bundle $T^1M$. This flow is a natural and much studied example of nonuniform hyperbolicity. It preserves a natural smooth measure on $T^1M$ known as the Liouville measure. As we explain in Section~\ref{sec:prelim-g},
the geometry determines  two continuous one dimensional subbundles of $TT^1M$ that are invariant under the derivative of the flow $G$ and are everywhere transverse to the subbundle $F^0$ that is tangent to the orbit foliation. We denote these bundles by $F^u$ and $F^s$.

The {\em rank} of  a vector $v \in T^1M$ is the codimension of the space $F^u_v \oplus F^s_v$
%%G: changed  + into \oplus
 in $T_vT^1M$, which is three dimensional. The rank is $1$ on the {\em regular set} $\cR$ and $2$ on the {\em higher rank set} $\cH$. The set $\cR$ is obviously open and invariant.  It is also dense \cite{Bal:82}. A vector $v \in T^1M$ belongs to $\cR$ if and only if the geodesic $\gamma_v$ with $\dot\gamma_v(0) = v$ passes through a point at which the curvature is negative. The complementary set $\cH$ is closed, invariant and nowhere dense;  $v \in \cH$ if and only if the curvature at $\gamma_v(t)$ is $0$ for all $t$.  The Liouville measure of $\cH$ is $0$ in all known examples, but this has not been proved in general. The flow $G$ exhibits hyperbolic behavior on $\cR$ and
$F^u\oplus F^0 \oplus F^s$ is the hyperbolic splitting there. The vanishing of the curvature along the geodesics tangent to vectors in $\cH$ means that there is no hyperbolicity at all on $\cH$. The flow $G$ is Anosov if $\cH = \emptyset$.

Let $\chi$ denote the Lyapunov exponent associated to the subbundle $F^u$. Since $G$ preserves the Liouville measure,  it is easily seen that the Lyapunov exponent associated to $F^s$ must be $-\chi$ on the set $\cR$. Thus on $\cR$ the three exponents for the flow are $\chi$, $-\chi$ and the exponent $0$ associated to the bundle $F^0$ tangent to the flow direction. This is also true on $\cH$, because the lack of hyperbolicity means that all Lyapunov exponents are $0$ there. The Lyapunov exponents for $G$ are  therefore completely determined by the function $\chi$.

We study  the level sets of $\chi$. More precisely, we consider the sets
 $$
 \cL(\alpha) \eqdef \left\{ v \in T^1M\colon \text{$v$ is Lyapunov regular and $\chi(v) = \alpha$} \right\}.
 $$
The definition  of $\cL(\alpha)$ can be reformulated using the continuous function
\begin{equation}\label{phidef}
	\varphi^u(v) = -  \lim_{t\to0}\frac{1}{t}\log\,\lVert d{g}^t|_{ F^u_v}\rVert.
\end{equation}
The Lyapunov exponent $\chi$ is the Birkhoff average of $-\varphi^u$ and $\cL(\alpha)$ is the set where the forward and backward Birkhoff averages both exist and are both equal to $-\alpha$ (see also Section~\ref{sec:prelim-e} for further equivalent definitions).

For a set $Z \subset T^1M$, we denote by $\overline\chi(Z)$ and $\underline\chi(Z)$  the supremum and the infimum respectively of $\chi(v)$ as $v$ ranges over Lyapunov regular vectors in $Z$. If $Z = T^1M$,  we write simply $\overline\chi$ and $\underline\chi$.
We have $\underline\chi = 0$ if $\cH \neq \emptyset$ and $\underline\chi > 0$ otherwise.
We denote by $h(Z)$ the topological entropy of the flow $G$ on the set $Z$, and we write just $h$ if $Z = T^1M$.\footnote{Notice that the sets $\cL(\alpha)$ are in general non-compact and accordingly, we have to use the general concept of topological entropy of noncompact sets (see Section~\ref{sec:press-1} for details).}

\begin{proposition} \label{nonemptyprop}
$\cL(\alpha) \neq \emptyset$ for $\alpha \in [\underline\chi,\overline\chi]$.
\end{proposition}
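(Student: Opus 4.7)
My plan is to realise each $\alpha \in [\underline\chi,\overline\chi]$ as the common forward and backward Birkhoff average of $-\varphi^u$ along some orbit, and to treat the endpoints separately. Since $\varphi^u$ is continuous by~\eqref{phidef} and the space of $G$-invariant Borel probability measures is convex and weak$^*$-compact, the functional $\mu \mapsto \chi(\mu) = -\int\varphi^u\,d\mu$ is continuous and affine, hence attains its maximum; using the ergodic decomposition one checks that $\sup_{\mu \text{ ergodic}} \chi(\mu) = \overline\chi$ and that this supremum is attained by some ergodic measure, any generic point of which lies in $\cL(\overline\chi)$. The same reasoning handles $\underline\chi$ when $\cH = \emptyset$. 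If $\cH \neq \emptyset$ then $\underline\chi = 0$, and any $v \in \cH$ already lies in $\cL(0)$ because $\varphi^u$ vanishes identically on $\cH$.

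For intermediate $\alpha$, the continuous affine map $\mu\mapsto\chi(\mu)$ sends the convex weak$^*$-compact set of invariant measures onto an interval which must coincide with $[\underline\chi,\overline\chi]$. If $\alpha$ happens to be the exponent of some ergodic measure, a generic point of that measure realises $\alpha$ and we are done. Otherwise, I would choose ergodic measures $\mu_1,\mu_2$ with exponents $\alpha_1 < \alpha < \alpha_2$; since any ergodic measure with nonzero Lyapunov exponent cannot charge $\cH$ (where $\varphi^u\equiv 0$), both $\mu_i$ are supported in the non-uniformly hyperbolic set $\cR$.

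The core construction is then to assemble a single orbit whose forward and backward Birkhoff averages are both exactly $-\alpha$. I would invoke a Katok/specification scheme: approximate each $\mu_i$ in the weak$^*$ topology and in entropy by a compact invariant uniformly hyperbolic subset (a basic set) of $\cR$ on which the restricted flow has the specification property, and with exponent close to $\chi(\mu_i)$. Then, by shadowing, construct an orbit that alternates between the two basic sets in blocks of lengths $s_k N_k$ and $(1-s_k)N_k$ with $s_k\alpha_1+(1-s_k)\alpha_2\to\alpha$. If the $N_k$ grow rapidly enough that each new block dominates the cumulative length of its predecessors in both time directions, the Birkhoff sums of $-\varphi^u$ along the shadowing orbit converge to $-\alpha$ both forward and backward, producing the desired vector in $\cL(\alpha)$.

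The main obstacle is precisely this shadowing step: hyperbolicity on $\cR$ degenerates as one approaches $\cH$, so there is no uniform specification globally and one must both locate the basic sets deep inside $\cR$ and control the shadowing parameters so the constructed orbit remains bounded away from $\cH$. A companion technical point is verifying that hyperbolic ergodic measures supported in $\cR$ can indeed be approximated by basic sets whose exponents converge to the measure-theoretic ones; this should follow from a Katok horseshoe construction adapted to the geodesic flow of a rank $1$ surface, together with the density of hyperbolic periodic orbits in $\cR$.
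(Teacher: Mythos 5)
Your treatment of the endpoints is fine and essentially the paper's: an ergodic measure attaining $\overline\chi$ (resp.\ $\underline\chi$) exists by weak$^*$ compactness plus ergodic decomposition (the paper gets this in Corollary~\ref{expmeas}), and its generic points lie in the corresponding level set; when $\cH\neq\emptyset$ the vectors of $\cH$ lie in $\cL(0)$. For interior $\alpha$, however, your core construction has a genuine error: the block-length condition is exactly backwards. If each new block (or pair of blocks) dominates the cumulative length of its predecessors, the running Birkhoff average of $-\varphi^u$ does \emph{not} converge: at the end of the sub-block spent near the first horseshoe it is forced close to $\alpha_1$, while at the end of the full pair it is close to $s_k\alpha_1+(1-s_k)\alpha_2\approx\alpha$, so the forward average oscillates between (neighborhoods of) $\alpha_1$ and $\alpha$ and the point you build is irregular rather than in $\cL(\alpha)$; the same happens backwards. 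Rapidly growing blocks are what one uses to manufacture points with \emph{divergent} averages. What you need is the opposite requirement: each block must be asymptotically negligible compared with the elapsed time (e.g.\ $N_k=o\bigl(\sum_{j<k}N_j\bigr)$, or blocks of bounded length), so that the proportion of time spent in each horseshoe up to time $T$ converges to the weights $s,1-s$ in both time directions; with that correction, plus control of the shadowing error on $\varphi^u$, the scheme can be made to work. A secondary point: when $\cH\neq\emptyset$ you also need an ergodic measure with exponent strictly between $0$ and $\alpha$ (a measure supported on $\cH$ is not hyperbolic and cannot be Katok-approximated); this is true but requires the closed geodesics of arbitrarily small positive exponent produced by a shadowing argument (Proposition~\ref{lem:exist-1} in the paper), not just compactness.

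It is also worth comparing routes. The paper does not glue two horseshoes by hand: it builds (Theorem~\ref{cor:dense}) an increasing family of basic sets $\Lambda_\ell\subset\cR$ with $\underline\chi(\Lambda_\ell)\to\underline\chi$ and $\overline\chi(\Lambda_\ell)\to\overline\chi$, so any interior $\alpha$ lies in $(\underline\chi(\Lambda_\ell),\overline\chi(\Lambda_\ell))$ for some $\ell$, and then quotes the multifractal analysis on a conformal basic set (\cite[Corollary 5.1]{PesSad:01}) to conclude $\cL(\alpha)\cap\Lambda_\ell\neq\emptyset$. If you prefer to keep your two-measure setup, the cleanest repair is in the same spirit: use the Katok horseshoes for $\mu_1,\mu_2$ and the bridging construction (Proposition~\ref{p:p5.3}) to place both inside a single basic set whose exponent interval contains $\alpha$, and then either cite the multifractal analysis there or run your specification argument with the corrected block-length condition, where uniform hyperbolicity and uniform shadowing are available and the degeneration near $\cH$ that you worried about no longer enters.
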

Our main result gives  lower bounds for the Hausdorff dimension $\dim_{\rm H}\cL(\alpha)$ of $\cL(\alpha)$ and the entropy of $h(\cL(\alpha))$
of the geodesic flow on the set $\cL(\alpha)$ for $\alpha \in (\underline\chi,\overline\chi)$. We are primarily interested in the case when  $\cH \neq \emptyset$ and the flow $G$ is nonuniformly hyperbolic. Our result is a natural extension of what holds in the case when $\cH = \emptyset$ and the geodesic flow is Anosov (see Section~\ref{sec:3.1basicsets}).

Our estimates are obtained from  the topological pressure of multiples of the function $\varphi^u$. By the variational principle, the topological pressure $P(\varphi)$ of a continuous function $\varphi\colon T^1M \to \bR$  is the supremum over all measures $\mu$ that are invariant under the flow $G$ of
$$
 	P(\varphi,\mu) = h(\mu) + \int\varphi\,d\mu,
$$
 where $h(\mu)$ is the entropy of the time-$1$ map  $g^1$ with respect to the measure $\mu$. In particular
$$
  	P(q\varphi^u,\mu) = h(\mu) - q\,\chi(\mu),
$$
  where
$$
 	 \chi(\mu) = - \int \varphi^u\,d\mu.
$$
The function
$$
 	\cP\colon q \mapsto P(q\varphi^u)
$$
 is convex since it is the supremum of the linear functions $q \mapsto P(q\varphi^u,\mu)$ and  nonincreasing since $\varphi^u \leq 0$. Moreover, we have $\cP(0) = h$ and $\cP(1) = 0$.

 We show that the graph of $\cP$ has a supporting line $q \mapsto \cF_\alpha(q)$ with slope $-\alpha$ for each $\alpha \in [\underline\chi,\overline\chi]$. Let $\cD(\alpha)$ be the intercept of $\cF_\alpha$ with the horizontal axis and $\cE(\alpha)$  the intercept of $\cF_\alpha$ with the vertical axis. The function $\alpha\mapsto  - \cE(\alpha)$ is the convex conjugate of  
% \[
 %	-\cE(\alpha)=-\inf_q\big(P(q\varphi^u)+q\alpha\big)
%		=\sup_q\big(-P(q\varphi^u)-q\alpha\big)
%		=\sup_q\big(q\alpha-P(-q\varphi^u)\big)
%		=\cP^\ast(-q)
% \]
 $q\mapsto \cP(-q)$ under the Legendre-Fenchel transform; thus
  $$
  \cE(\alpha) = \inf_{q \in \bR}  \left( P(q\varphi^u) + q\alpha\right).
  $$
  Another characterization of $\cE$ is that $\cE(\alpha)$ is the maximum  of $h(\mu)$ for an invariant measure $\mu$ with $\chi(\mu) = \alpha$; see Lemma~\ref{Echarac}. Since $\cD(\alpha) = \cE(\alpha)/\alpha$, formulas involving $\cD$ are often expressed in terms of $\cE$.

\begin{theorem} \label{mainthm}
     For every $\alpha \in (\underline\chi,\overline\chi)$,
   the Hausdorff dimension of $\cL(\alpha)$ satisfies
    \begin{equation}\label{easy}
    	\dim_{\rm H}\cL(\alpha)
	\ge 1+2\,\cD(\alpha)
	= 1+2\,\frac{\cE(\alpha)}{\alpha},
    \end{equation}
    and the entropy on $\cL(\alpha)$ satisfies
    \begin{equation}\label{alsoeasy}
       	h(\cL(\alpha)) \geq \cE(\alpha).
   \end{equation}
\end{theorem}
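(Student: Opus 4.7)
The plan is to construct, for each $\alpha \in (\underline\chi,\overline\chi)$, a sequence of basic sets (compact, locally maximal, uniformly hyperbolic, topologically transitive) $\Lambda_n \subset \cR$ for the time-1 map $g^1$, together with ergodic $g^1$-invariant measures $\mu_n$ supported in $\Lambda_n$ satisfying $\chi(\mu_n) = \alpha$ for every $n$ and $h(\mu_n) \to \cE(\alpha)$. Both inequalities in the theorem will then fall out from the restriction of the dynamics to these uniformly hyperbolic pieces.

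Assume this family has been constructed. Since $\mu_n$ is ergodic with Lyapunov exponent $\alpha$, Birkhoff's ergodic theorem gives $\mu_n(\cL(\alpha))=1$. The Bowen--Pesin--Pitskel variational principle for topological entropy on noncompact sets then yields $h(\cL(\alpha)) \ge h(\mu_n)$, and passing to the limit proves \eqref{alsoeasy}. For \eqref{easy} I would apply the Ledrappier--Young dimension formula on $\Lambda_n$, which is uniformly hyperbolic with one-dimensional stable and unstable invariant subbundles transverse to the flow direction; for the ergodic hyperbolic measure $\mu_n$, with stable and unstable exponents $-\alpha$ and $\alpha$,
\[
\dim_{\rm H}\mu_n \;=\; 1 + \frac{h(\mu_n)}{\alpha} + \frac{h(\mu_n)}{\alpha} \;=\; 1 + 2\,\frac{h(\mu_n)}{\alpha},
\]
the leading $1$ accounting for the flow direction. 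Since $\mu_n$ is carried by $\cL(\alpha)$, one has $\dim_{\rm H}\cL(\alpha) \ge \dim_{\rm H}\mu_n$, and letting $h(\mu_n) \to \cE(\alpha)$ gives \eqref{easy}.

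It remains to produce the pairs $(\Lambda_n,\mu_n)$. By Lemma~\ref{Echarac} there is an invariant measure $\nu$ (or a sequence of such) with $\chi(\nu)=\alpha$ and $h(\nu)$ arbitrarily close to $\cE(\alpha)$; since $\alpha > 0$, the relevant ergodic components of $\nu$ are hyperbolic measures essentially concentrated on $\cR$. I would then invoke a Katok-type approximation theorem adapted to the geodesic flow of a rank 1 surface to obtain a basic set $\Lambda \subset \cR$ and an ergodic invariant measure supported in $\Lambda$ whose entropy and Lyapunov exponent are within $\varepsilon$ of those of $\nu$. Finally, on each such $\Lambda$ the analytic one-parameter family of equilibrium states for $q\varphi^u|_\Lambda$ has Lyapunov exponent $\chi(\cdot)$ varying continuously over a nontrivial open interval, so by an intermediate-value argument one can select $q_n$ so that the corresponding equilibrium measure has Lyapunov exponent exactly $\alpha$; this provides $\mu_n$.

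The main obstacle is this last horseshoe construction. The geodesic flow is only nonuniformly hyperbolic on $\cR$ and has no hyperbolicity at all on the closed invariant set $\cH$, so Katok's classical theorem for $C^{1+\text{H\"older}}$ surface diffeomorphisms does not apply directly. One must exploit the geometric structure specific to nonpositively curved surfaces---the globally defined continuous bundles $F^u,F^s$ and the density of $\cR$---to build uniformly hyperbolic approximants of $\nu$ that track both the entropy and the Lyapunov exponent. Once this approximation is available, the remainder of the argument is a routine combination of the thermodynamic formalism on the basic sets $\Lambda_n$ with standard hyperbolic dimension theory.
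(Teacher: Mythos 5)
Your overall strategy---exhausting the nonuniformly hyperbolic flow by basic sets in $\cR$ and pushing lower bounds up from ergodic measures carried by them---is in the same spirit as the paper, and your two final steps are sound in the uniformly hyperbolic, conformal setting: the Pesin--Pitskel inequality gives $h(\cL(\alpha))\ge h(\mu_n)$ once $\mu_n$ is ergodic with $\chi(\mu_n)=\alpha$, and the dimension formula $\dim_{\rm H}\mu_n=1+2h(\mu_n)/\alpha$ for measures on a basic set of a three-dimensional flow with one-dimensional invariant bundles yields \eqref{easy}. The genuine gap is exactly at the step you flag as the crux: producing basic sets $\Lambda_n$ and \emph{ergodic} measures $\mu_n$ on them with $\chi(\mu_n)=\alpha$ exactly and $h(\mu_n)\to\cE(\alpha)$. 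The maximizer in Lemma~\ref{Echarac} need not be ergodic; in the case of main interest ($\cH\neq\emptyset$ and $\alpha<\alpha_1$) the graph of $\cP$ has a corner at $(1,0)$, and the measure furnished by (\ref{P9}) of Proposition~\ref{convprop} and Lemma~\ref{cornerlemma} is a nontrivial convex combination of measures with \emph{different} exponents (one of them of exponent and entropy zero). Katok's approximation (Proposition~\ref{lem:horse}) applies to a single ergodic measure and produces a horseshoe all of whose exponents lie within $\varepsilon$ of the exponent of that component; if that exponent differs from $\alpha$, the interval $[\underline\chi(\Lambda),\overline\chi(\Lambda)]$ may miss $\alpha$ altogether and your intermediate-value selection among equilibrium states of $q\varphi^u|_\Lambda$ has nothing to select. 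Even when $\alpha$ does lie in the interior of that interval, the equilibrium state you pick has entropy $\cE_\Lambda(\alpha)$, the maximal entropy at exponent $\alpha$ \emph{among measures on that one horseshoe}, and nothing in your argument makes this close to $\cE(\alpha)$: the value $\cE(\alpha)$ may only be approached by mixing ergodic components of different exponents, which a horseshoe built from a single component does not see.

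To close this you must merge several hyperbolic pieces into one basic set---horseshoes approximating different ergodic components, or at least a high-entropy horseshoe together with closed orbits of exponent near $\underline\chi$ and near $\overline\chi$---so that $\alpha$ sits well inside the exponent interval of the merged set and convex combinations on it realize entropy at least $\cE(\alpha)-\varepsilon$ at exponent exactly $\alpha$. This is where the real work of the paper lies: a closed hyperbolic subset of $\cR$ is not in general contained in a locally maximal one, and the paper proves that here it is (Proposition~\ref{prop:exis2}, using one-dimensionality from Proposition~\ref{p:topone}) and then bridges basic sets into larger ones (Proposition~\ref{p:p5.3}, Theorem~\ref{cor:dense}); the needed convergence $\cE_{\Lambda_\ell}(\alpha)\to\cE(\alpha)$ is then obtained not measure by measure but through pointwise and locally uniform convergence of the pressure functions $\cP_{\Lambda_\ell}\to\cP$ and of their supporting lines (Theorem~\ref{theorem:3}, Propositions~\ref{unifconvprop} and~\ref{lineconv}), after which the Barreira--Doutor formulas on $\Lambda_\ell$ give the bounds. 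Without the locally-maximal extension and bridging step, and some substitute for this convergence of restricted spectra, your construction of the pairs $(\Lambda_n,\mu_n)$ does not go through.
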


Theorem~\ref{mainthm} extends  results of Pesin and Sadovskaya~\cite{PesSad:01} for conformal Axiom A flows and Barreira and Doutor \cite{BarDou:04} for compact locally maximal hyperbolic invariant sets on which the flow is conformal. In their situations the inequalities in the theorem are equalities. We believe that the same is true for Hausdorff dimension in our setting; this question will be studied in a sequel to this paper. In the  case of entropy, we apply a result of Bowen \cite{Bow:73b} to prove to following result.

\begin{theorem}\label{entupperboundthm}
  	$h(\cL(\alpha)) \leq \cE(\alpha)$  for $\alpha \in [\underline\chi,\overline\chi]$.
\end{theorem}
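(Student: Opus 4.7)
My plan is to invoke Bowen's entropy upper bound from~\cite{Bow:73b} for Birkhoff level sets of a continuous observable, translated from the time-one map $g^1$ to the flow $G$, and then to identify the resulting variational quantity with $\cE(\alpha)$ via Lemma~\ref{Echarac}.

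First, I would pass to discrete time by introducing the continuous function
\[
   \Phi(v)\eqdef \int_0^1 \varphi^u(g^s v)\,ds
\]
on $T^1M$. A telescoping argument shows that
\[
   \lim_{n\to\infty}\frac{1}{n}\sum_{k=0}^{n-1}\Phi(g^k v)
   =\lim_{t\to\infty}\frac{1}{t}\int_0^t\varphi^u(g^s v)\,ds
\]
whenever either limit exists, so $\cL(\alpha)$ is contained in the set $K_{-\alpha}$ of $v\in T^1M$ whose $g^1$-Birkhoff average of $\Phi$ equals $-\alpha$. Since the topological entropy of $G$ on an invariant set agrees with that of $g^1$ on the same set, and topological entropy for noncompact sets is monotone, this yields $h(\cL(\alpha))\leq h_{g^1}(K_{-\alpha})$.

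Next, I would apply Bowen's theorem from~\cite{Bow:73b} to the compact system $(T^1M,g^1)$ with observable $\Phi$:
\[
   h_{g^1}(K_{-\alpha})
   \leq \sup\bigl\{h_{g^1}(\mu):\mu\text{ is }g^1\text{-invariant},\ \textstyle\int\Phi\,d\mu=-\alpha\bigr\}.
\]
The final step is to replace $g^1$-invariant measures by flow-invariant ones. Given such a $\mu$, set $\widetilde\mu\eqdef\int_0^1 (g^s)_*\mu\,ds$. Then $\widetilde\mu$ is $G$-invariant; by Fubini and the $g^1$-invariance of $\mu$, $\int\varphi^u\,d\widetilde\mu=\int\Phi\,d\mu=-\alpha$, so $\chi(\widetilde\mu)=\alpha$; and since each $(g^s)_*\mu$ has the same $g^1$-entropy as $\mu$ (being conjugate to $\mu$ via $g^s$), affinity of entropy gives $h_{g^1}(\widetilde\mu)\geq h_{g^1}(\mu)$. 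Combining with the characterization of $\cE(\alpha)$ supplied by Lemma~\ref{Echarac} then produces the desired bound $h(\cL(\alpha))\leq \cE(\alpha)$.

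The main obstacle, in my view, is the careful verification of the two transfers: from time-$t$ Birkhoff averages of the flow to discrete $g^1$-averages of $\Phi$ (so that Bowen's theorem applies without loss), and from $g^1$-invariant to $G$-invariant measures without decreasing entropy or altering the constraint on $\chi$. At the endpoints $\alpha\in\{\underline\chi,\overline\chi\}$ one needs in addition that the constraint set of flow-invariant measures with $\chi(\mu)=\alpha$ is nonempty; this follows from Proposition~\ref{nonemptyprop} via an ergodic decomposition, so Lemma~\ref{Echarac} continues to identify the supremum with $\cE(\alpha)$.
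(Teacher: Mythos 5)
Your proposal is correct and follows essentially the same route as the paper: both arguments rest on Bowen's entropy bound from \cite{Bow:73b} together with the characterization of $\cE(\alpha)$ in Lemma~\ref{Echarac}. The only real difference is cosmetic: the paper applies Bowen's Theorem~2 directly to the set $QR(\cE(\alpha))$ using limits of flow-empirical measures (which are automatically $G$-invariant and integrate $\varphi^u$ to $-\alpha$), so your restatement of Bowen's result as a Birkhoff-level-set bound for $g^1$ (a corollary of his $QR$ theorem rather than its literal statement) and your subsequent time-averaging step from $g^1$-invariant to $G$-invariant measures are not needed.
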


 It follows that $h(\cL(\alpha)) = \cE(\alpha)$  for $\alpha \in (\underline\chi,\overline\chi)$.

  Our approach  to Theorem~\ref{mainthm} is to ``exhaust'' the non-uniformly hyperbolic set $T^1M$ by a sequence of basic sets. A {\em basic set} is a compact locally maximal hyperbolic set on which the flow is transitive.   We prove:

  \begin{theorem}\label{cor:dense}
	There is a family of basic sets $\Lambda_1\subset\Lambda_2\subset\cdots\subset \cR$ such that
	\begin{equation}\label{e:convergenceexpo}
		\lim_{\ell\to\infty}\underline\chi(\Lambda_\ell)=\underline\chi
		\quad\text{ and }\quad
		\lim_{\ell\to\infty}\overline\chi(\Lambda_\ell)=\overline\chi.	
	\end{equation}	
This family can be chosen so that $\bigcup_\ell\Lambda_\ell$ is dense in $T^1M$
%%G: changed into basic and $\Lambda\ne\cR$ added:
and for any basic set $\Lambda\subset\cR$, $\Lambda\ne\cR$, there exists $\ell\ge1$ such that $\Lambda\subset\Lambda_\ell$.
\end{theorem}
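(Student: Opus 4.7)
My plan is to exhaust $\cR$ by an increasing family of compact uniformly hyperbolic invariant sets and then refine each to a basic set via an amalgamation construction. First I define, for each $\ell\ge 1$, a compact $G$-invariant set $S_\ell\subset\cR$ by imposing a quantitative version of regularity, for example by requiring that every orbit segment of length $\ell$ visits the region where the sectional curvature is at most $-1/\ell$. Such a set $S_\ell$ carries uniform hyperbolicity constants (the Jacobi equation on $S_\ell$ admits a uniform spectral gap), so standard stable-manifold theory equips it with a local product structure. As $\ell\to\infty$ the sets $S_\ell$ increase, $\bigcup_\ell S_\ell$ is dense in $T^1M$, and every compact $G$-invariant subset of $\cR$ eventually lies inside some $S_\ell$.

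The second ingredient is an amalgamation lemma: given finitely many basic sets $\Lambda^{(1)},\ldots,\Lambda^{(k)}\subset\cR$, there is a single basic set $\Lambda\subset\cR$ containing them all. This will follow from the shadowing/closing lemma on any $S_\ell$ together with the existence of heteroclinic connections between arbitrary pairs of hyperbolic periodic orbits in $\cR$, itself a consequence of Ballmann's density results combined with the transitivity of the flow on $\cR$. The resulting $\Lambda$ is the closure of the homoclinic class of any of its periodic orbits, and I must verify that it remains locally maximal inside some $S_{\ell'}$.

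With these tools I construct the sequence $\Lambda_\ell$ inductively. By Katok's horseshoe theorem applied to ergodic measures whose Lyapunov exponent approaches $\overline\chi$ or $\underline\chi$, I produce periodic orbits $P_n^\pm\subset\cR$ with $\chi(P_n^\pm)\to\overline\chi$ and $\chi(P_n^-)\to\underline\chi$; in the case $\underline\chi=0$ one takes ergodic measures weakly converging to invariant measures on $\cH$ and extracts approximating periodic orbits lying in $\cR$. I also enumerate a countable family $\{o_n\}$ of periodic orbits in $\cR$ that is dense in $T^1M$. At stage $\ell$, let $\Lambda_\ell$ be the basic set produced by the amalgamation lemma from $\Lambda_{\ell-1}$, $P_\ell^+$, $P_\ell^-$, and $o_\ell$, arranged so that $\Lambda_\ell$ in addition absorbs every basic set already contained in $S_\ell$.

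Properties~\eqref{e:convergenceexpo} and the density of $\bigcup_\ell\Lambda_\ell$ are then immediate from the construction. For the final clause, given any basic set $\Lambda\subset\cR$ with $\Lambda\ne\cR$, compactness of $\Lambda$ inside the open set $\cR$ gives $\Lambda\subset S_\ell$ for some $\ell$, whence $\Lambda\subset\Lambda_\ell$ by the absorption property. The main obstacle is precisely this amalgamation/absorption step: one has to show that a single basic set can simultaneously contain an arbitrary pair of basic sets in $\cR$ while remaining transitive, hyperbolic, and locally maximal. This demands a careful construction of heteroclinic intersections using the hyperbolicity of $S_\ell$ and its local product structure, and then a verification that the closure of the resulting homoclinic class is itself a basic set staying inside $\cR$.
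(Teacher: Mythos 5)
Your overall architecture (an exhausting family of compact uniformly hyperbolic invariant sets, periodic orbits with exponents approaching $\underline\chi$ and $\overline\chi$ obtained by shadowing/Katok, and an inductive ``amalgamation'' into a nested sequence of basic sets) mirrors the paper's strategy, but there is a genuine gap at exactly the step the paper identifies as the main difficulty: local maximality. You assert that each $S_\ell$ ``carries uniform hyperbolicity constants \dots so standard stable-manifold theory equips it with a local product structure.'' This is false as stated: stable-manifold theory gives local stable and unstable manifolds, but a local product structure requires that the bracket $[x,y]$ of nearby points of the set again lies in the set, which for hyperbolic sets is equivalent to local maximality and is \emph{not} a consequence of uniform hyperbolicity. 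Indeed, by the examples of Crovisier and Fisher cited in the paper, a compact hyperbolic set need not even be contained in any locally maximal hyperbolic set, so no soft argument can supply this. The same gap reappears in your amalgamation step: you propose to take the closure of a homoclinic class and then ``verify that it remains locally maximal inside some $S_{\ell'}$,'' but homoclinic classes need not be locally maximal, and you give no mechanism for this verification. The paper's resolution is Proposition~\ref{prop:exis2} (Anosov's argument): one first shows (Proposition~\ref{p:topone}, using minimality of the horocycle foliations and the constancy of forward/backward exponents along leaves) that every compact invariant proper subset of $\cR$ has topological dimension $1$, and then builds an enveloping locally maximal hyperbolic set by coding a zero-dimensional cross-section with a subshift of finite type; the local product structure is imported from the shift, not deduced from hyperbolicity. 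Your proposal contains no substitute for this one-dimensionality input, which is what makes the whole construction possible.

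A secondary, fixable issue is the ``absorption'' clause: your amalgamation lemma combines only finitely many basic sets, yet at stage $\ell$ you ask $\Lambda_\ell$ to absorb \emph{every} basic set already contained in $S_\ell$, which is not a finite family and is not justified by the lemma. The paper avoids this by taking $\widehat\Lambda_\ell$ to be the closure of the periodic orbits outside a $1/\ell$-neighborhood of $\cH$ and enclosing that single hyperbolic set in a basic set $\Lambda_\ell$; then any basic set $\Lambda\subset\cR$, $\Lambda\ne\cR$, satisfies $\Lambda=\overline{\Per(\Lambda)}\subset\widehat\Lambda_\ell\subset\Lambda_\ell$ for large $\ell$, because periodic orbits are dense in a basic set and $\Lambda$ avoids a neighborhood of $\cH$. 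You could repair your scheme analogously by enclosing $S_\ell$ (or the closure of its periodic orbits) in a single basic set, but doing so again requires the enveloping locally maximal set, i.e.\ the missing Anosov/topological-dimension-one argument, together with the spectral decomposition and the bridging step of Proposition~\ref{p:p5.3} to restore transitivity.
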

The main difficulty in proving Theorem~\ref{cor:dense} is to show that any closed hyperbolic subset of $\cR$  is contained in a locally maximal hyperbolic set. This is not true in general, as is shown by examples of Crovisier~\cite{Cro:02} and Fisher~\cite{Fis:06}.
We use an argument suggested by Anosov, which exploits the fact that a closed hyperbolic subset of $\cR$ is one dimensional (unless it is the whole of $\cR$).

 \begin{figure}
\begin{minipage}[c]{\linewidth}
\centering
\begin{overpic}[scale=.60%,grid,tics=10
  ]{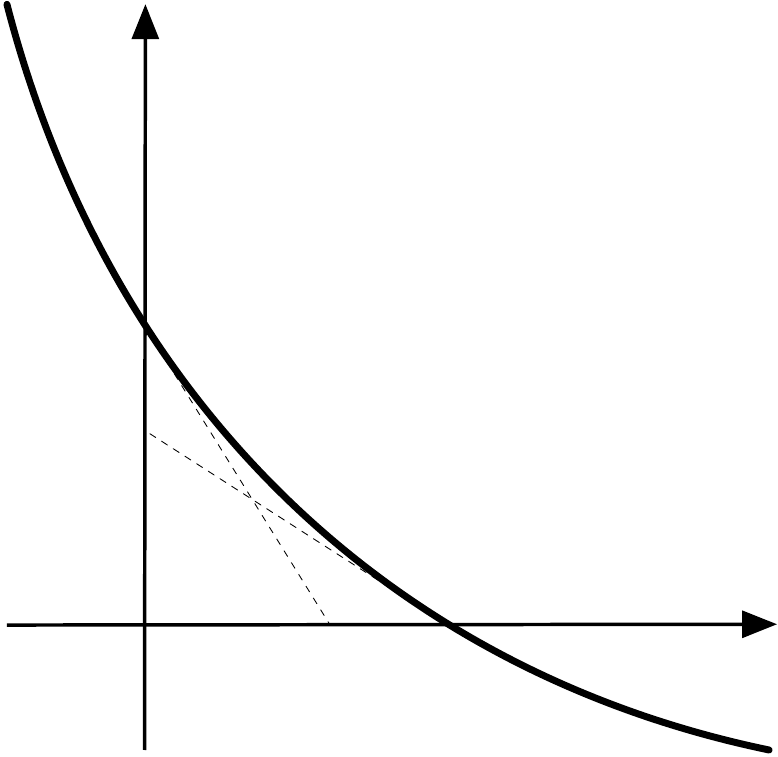}
      	\put(28,10){\tiny$\cD(\alpha_0)$}	
      	\put(50,10){\tiny$\cD(\alpha_1)$} \put(53,5){\tiny$=1$}	
      	\put(102,16){\small$q$}	
      	\put(25,90){\small$P(q\varphi^u)$}	
	\put(-10,55){\tiny$h({\cL(\alpha_0)})$}
	\put(-10,49){\tiny$=h$}
	\put(-10,40){\tiny$h({\cL(\alpha_1)})$}
\end{overpic}
\hspace{0.7cm}
\begin{overpic}[scale=.60%,grid,tics=10
  ]{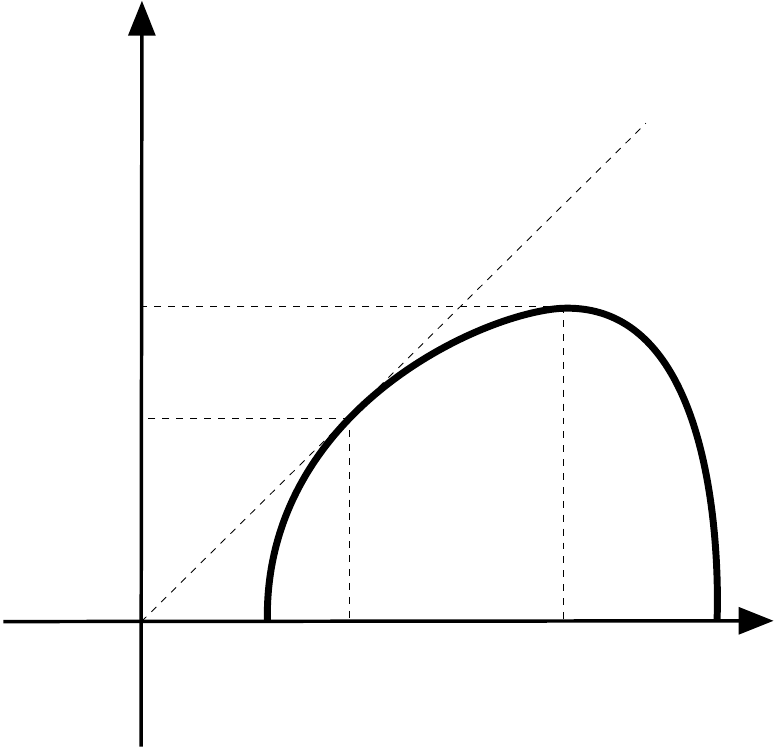}
      	\put(42,10){\tiny$\alpha_1$}
	\put(70,10){\tiny$\alpha_0$}	
      	\put(102,15){\small$\alpha$}	
      	\put(25,90){\small$\cE(\alpha)$}	
	\put(-10,56){\tiny$h$}
	\put(-10,42){\tiny$h({\cL(\alpha_1)})$}\end{overpic}
\end{minipage}
\caption{The pressure function $q\mapsto P(q\varphi^u)$ and its conjugate $\alpha\mapsto\cE(\alpha)$ and entropies and Hausdorff dimensions for the exponent $\alpha_0$ of the maximal entropy measure and the exponent $\alpha_1$ of the Liouville measure in the case that $G|_{T^1M}$ is Anosov.}
\label{fig.map6}
\end{figure}

If $\Lambda$ is basic, then $\dim_{\rm H}(\cL(\alpha) \cap \Lambda)$ and $h(\cL(\alpha) \cap \Lambda)$ are lower bounds for $\dim_{\rm H}\cL(\alpha)$ and $h(\cL(\alpha))$. One can  express $\dim_{\rm H}(\cL(\alpha) \cap \Lambda)$ and $h(\cL(\alpha) \cap \Lambda)$ using the functions $\cD_\Lambda$ and $\cE_\Lambda$, which are defined analogously to $\cD$ and $\cE$ starting from the function
   $$
   \cP_\Lambda\colon
   q \mapsto P(q\varphi^u,\Lambda).
   $$
   This function is real analytic and strictly convex.
  By results of Barreira and Doutor (see \cite[Corollary 15]{BarDou:04}),
  $$
    \text{$\dim_{\rm H}(\cL(\alpha) \cap \Lambda) =
    1 + 2\,\cD_\Lambda(\alpha)\quad$and\quad
    $h(\cL(\alpha) \cap \Lambda) = \cE_\Lambda(\alpha)$,}
    $$
for each $\alpha \in (\underline\chi(\Lambda),\overline\chi(\Lambda))$. Furthermore for each such $\alpha$ there is a unique ergodic measure $\mu$ supported on $\Lambda$ such that  $\chi(\mu) = \alpha$ and $q \mapsto P(q\varphi^u,\mu)$ is a supporting line for $\cP_\Lambda$.

   The key step in proving Theorem~\ref{mainthm} is then to show that $P(q\varphi^u,{\Lambda_\ell}) \to P(q\varphi^u)$ as $\ell \to \infty$ for each $q$ and that this convergence is uniform on compact subsets. This is accomplished using the analogue for flows in three dimensions of Katok's theorem that
 the entropy of a hyperbolic invariant measure for a surface diffeomorphism can be approximated by the entropy on a basic set (see~\cite{Kat:80,Kat:82,Kat:84} or~\cite[Supplement S.5]{KatHas:95}).

 Our methods also yield the following result, which may be of independent interest.

 \begin{theorem}\label{lem:lowe}
	The entropies $h(\Lambda)$ for the basic sets $\Lambda \subset \cR$ are dense in $[0,h]$.
	%%G: Moreover for every $h'\in[0,h)$ there exists a compact hyperbolic set $\Lambda \subset \cR$ such that $h(\Lambda)=h'$.
\end{theorem}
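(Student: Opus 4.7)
The plan is to combine Theorem~\ref{cor:dense} with a sub-basic-set approximation inside each $\Lambda_\ell$. Two ingredients suffice: (a) $h(\Lambda_\ell)\to h$ as $\ell\to\infty$, and (b) inside any basic set $\Lambda\subset\cR$ the entropies $h(\Lambda')$ of basic subsets $\Lambda'\subset\Lambda$ form a dense subset of $[0,h(\Lambda)]$. Granted these, for $s\in[0,h)$ and $\epsilon>0$ I choose $\ell$ with $h(\Lambda_\ell)>s$ and apply (b) inside $\Lambda_\ell$ to obtain a basic set $\Lambda'\subset\Lambda_\ell\subset\cR$ with $\lvert h(\Lambda')-s\rvert<\epsilon$; the value $s=h$ is covered by (a) itself, and the endpoint $s=0$ is trivially realized by any single periodic orbit contained in some $\Lambda_\ell$.

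Ingredient (a) is immediate from the key convergence $P(q\varphi^u,\Lambda_\ell)\to P(q\varphi^u)$ used in the proof of Theorem~\ref{mainthm}: evaluating at $q=0$ gives $h(\Lambda_\ell)=P(0,\Lambda_\ell)\to P(0)=h$.

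For ingredient (b) I would use a Markov section (Bowen--Ratner) to realize $\Lambda$ as a suspension flow over an irreducible subshift of finite type $\Sigma_A$ with a H\"older roof $\tau>0$ bounded away from $0$ and $\infty$. Any irreducible sub-matrix $B\le A$ (after, if needed, passing to an entropy-preserving higher block recoding of $A$) produces a sub-SFT $\Sigma_B\subset\Sigma_A$ whose suspension is a basic subset $\Lambda_B\subset\Lambda$ with flow entropy equal to the unique $c>0$ satisfying $P_{\Sigma_B}(-c\tau|_{\Sigma_B})=0$. Real-analyticity and continuous dependence of the pressure on its arguments make $c$ vary continuously between $h(\Lambda)$ (when $B=A$) and $0$ (when $\Sigma_B$ reduces to a single periodic loop), and a standard vertex/edge-removal argument on higher block presentations of $A$ produces a dense set of such $c$ in $[0,h(\Lambda)]$.

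The main obstacle is (b): although density of entropies among sub-SFTs is classical in symbolic dynamics, it must be transferred through the suspension to the flow, which requires controlling the effect of the roof function on the flow entropy. The uniform bounds on $\tau$ on the compact basic set $\Lambda$ together with the smoothness of the pressure in the potential make the map $B\mapsto c$ sufficiently regular for the classical density argument to adapt. Once (b) is in hand, combining with (a) yields Theorem~\ref{lem:lowe}.
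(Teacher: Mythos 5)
Your two-step scheme (exhaustion with $h(\Lambda_\ell)\to h$, plus density of entropies of basic subsets inside a fixed basic set) is a reasonable strategy, and step (a) is indeed immediate from Theorem~\ref{theorem:3} evaluated at $q=0$. The genuine gap is in ingredient (b). The classical density statements you appeal to (word/vertex removal in higher block presentations, or Krieger-type embeddings) concern \emph{shift} entropies of sub-SFTs, whereas you need density of the \emph{flow} entropies $c_B$ defined implicitly by $P_{\Sigma_B}(-c_B\tau)=0$, and the nonconstant roof does not come along for free: a sub-SFT whose shift entropy is prescribed to within $\varepsilon$ only has flow entropy pinned in $[h_B/\max\tau,\,h_B/\min\tau]$. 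Your bridge --- ``real-analyticity and continuous dependence of the pressure make $c$ vary continuously between $h(\Lambda)$ and $0$'' --- does not apply as stated, since the family of sub-SFTs is discrete; what is actually required is a quantitative estimate that deleting a single word of length $n$ changes the root of $P(-c\tau)=0$ by an amount going to $0$ with $n$, \emph{uniformly along the entire deletion process}, while preserving irreducibility. Pressure-perturbation (escape-rate) estimates of this kind are known for a fixed mixing SFT and a H\"older potential, but their constants depend on the mixing/Gibbs constants of the system currently being perturbed, and these degrade as more and more words are removed; transitivity of the remaining system is also not automatic. So the crucial step of (b) is precisely what is left unproved, and it is not a citable ``standard'' fact in the form you need. (A smaller point: when you declare the suspension of $\Sigma_B$ to be a basic subset of the flow, you must also deal with the non-injectivity of the Markov coding on the boundaries of the section.)

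For comparison, the paper avoids symbolic dynamics here. When $\cH\ne\emptyset$ it combines Katok's horseshoe approximation (Proposition~\ref{lem:horse}) with the bridging Proposition~\ref{p:p5.3} and the closed orbits of arbitrarily small exponent (Proposition~\ref{lem:exist-1}) to build one basic set $\widehat\Lambda$ with $h(\widehat\Lambda)\ge h-\delta$ and $\underline\chi(\widehat\Lambda)<\delta$; the entropy spectrum of $\widehat\Lambda$ then attains every value $h'\in(\delta,h-\delta)$ at an equilibrium state $\mu_q$ of $q\varphi^u$, and one further application of Proposition~\ref{lem:horse} to $\mu_q$ (its exponent-pinching clause (iii) being the extra control available there) produces the desired basic sets; when $\cH=\emptyset$ it runs a one-parameter family of equilibrium states for $q\varphi$ with $\varphi\le 0$ H\"older and vanishing on a single closed orbit. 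If you wish to keep your symbolic route, the honest work is exactly the uniform word-removal estimate for the implicit equation $P_{\Sigma_B}(-c\tau)=0$; alternatively, replace (b) by the paper's measure-level argument, in which case you must still explain how the approximating basic set's entropy is controlled from above, not only from below.
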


 There are some distinctive features that arise only when $\cH \neq \emptyset$. Let $\widetilde m$  be the measure obtained by restricting the Liouville measure to the invariant open set $\cR$ and normalizing to obtain a probability measure. In all known examples $\widetilde m$ is just  the Liouville measure, but this has not been proved in general.\footnote{The issue is, of course, whether the Liouville measure of $\cH$ must be $0$. Our result in Theorem~\ref{Hnonzerothm} that $h(\cL(0))  = 0$ is consistent with the hope that the Liouville measure of $\cH$ is always $0$. }
 Ergodicity of $\widetilde m$ was proved in~\cite{Pes:77}.
 Set $\alpha_1 \eqdef \chi(\widetilde m)$.

 \begin{theorem}\label{Hnonzerothm}
 	If  $\,\cH \neq \emptyset$, we have $\cP(q) = 0$ for $q \geq 1$. Moreover
$\cD(\alpha) = 1$ and $\cE(\alpha) = \alpha$ for $\alpha \in [0, \alpha_1]$. Hence
 $\dim_{\rm H}\cL(\alpha) = 3$ for $\alpha \in (0,\alpha_1]$ and $h(\cL(\alpha)) = \alpha$ for $\alpha \in [0,\alpha_1]$. In particular $h(\cL(0))  = 0$.
 \end{theorem}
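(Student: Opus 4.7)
The plan is to exploit two complementary sources of information: invariant measures on the nonempty closed set $\cH$ all have vanishing Lyapunov exponents and zero entropy, which forces $\cP$ to flatten at $0$; and Pesin's entropy formula for the smooth measure $\widetilde m$ provides the single affine lower bound that pins $\cP$ down below elsewhere. The rest is convex analysis. The first step, $\cP(q)=0$ for $q\ge 1$, splits into two inequalities. The upper bound $\cP(q)\le 0$ is immediate from $\cP$ being nonincreasing with $\cP(1)=0$. For the reverse, use Krylov--Bogolyubov to pick any $G$-invariant probability measure $\mu$ with $\mu(\cH)=1$. Every vector in $\cH$ has all Lyapunov exponents equal to $0$, so $\chi(\mu)=-\int\varphi^u\,d\mu=0$; the Ruelle inequality applied to $g^1$ then forces $h(\mu)=0$. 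Hence $P(q\varphi^u,\mu)=0$ for every $q$, and the variational principle gives $\cP(q)\ge 0$.

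Next I would compute $\cE$ on $[0,\alpha_1]$. Since $\widetilde m$ is a smooth $G$-invariant probability measure supported on the nonuniformly hyperbolic set $\cR$, Pesin's entropy formula yields $h(\widetilde m)=\chi(\widetilde m)=\alpha_1$, so $q\mapsto P(q\varphi^u,\widetilde m)=\alpha_1(1-q)$ is an affine lower support for $\cP$, giving $\cP(q)\ge\alpha_1(1-q)$ for all $q$. For $\alpha\in[0,\alpha_1]$, the quantity $\cP(q)+q\alpha$ is bounded below by $q\alpha\ge\alpha$ on $q\ge 1$ and, on $q\le 1$,
\[
\cP(q)+q\alpha\;\ge\;\alpha_1(1-q)+q\alpha\;=\;\alpha+(\alpha_1-\alpha)(1-q)\;\ge\;\alpha,
\]
with equality in both cases at $q=1$. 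Hence $\cE(\alpha)=\alpha$ and, for $\alpha>0$, $\cD(\alpha)=\cE(\alpha)/\alpha=1$.

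The dimension and entropy statements then follow by sandwich. For $\alpha\in(0,\alpha_1)\subset(\underline\chi,\overline\chi)$, Theorem~\ref{mainthm} gives $\dim_{\rm H}\cL(\alpha)\ge 1+2\cD(\alpha)=3$, matching the trivial upper bound $\dim T^1M=3$; at the right endpoint $\alpha=\alpha_1$, ergodicity of $\widetilde m$ from \cite{Pes:77} together with Birkhoff's theorem puts $\widetilde m$-a.e.\ point into $\cL(\alpha_1)$, so $\cL(\alpha_1)$ has positive Liouville measure and full Hausdorff dimension. Theorems~\ref{mainthm} and~\ref{entupperboundthm} sandwich $h(\cL(\alpha))=\cE(\alpha)=\alpha$ on $(0,\alpha_1)$, and the boundary case $\alpha=0$ uses only the upper bound of Theorem~\ref{entupperboundthm} (the bound $h\ge 0$ being trivial) to yield $h(\cL(0))=0$. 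The only subtlety worth flagging is the right endpoint $\alpha=\alpha_1$, which might coincide with $\overline\chi$ and so fall outside the range of Theorem~\ref{mainthm}; it is handled directly via ergodicity of $\widetilde m$, after which all remaining steps are routine bookkeeping.
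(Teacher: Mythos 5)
Your proof is correct and follows essentially the paper's route: an invariant measure supported on $\cH$ (zero exponent, hence zero entropy by Ruelle) forces $\cP\ge 0$, Pesin's formula for $\widetilde m$ supplies the affine minorant $q\mapsto\alpha_1(1-q)$, the explicit infimum computation gives $\cE(\alpha)=\alpha$ and $\cD(\alpha)=1$ on $[0,\alpha_1]$, and Theorems~\ref{mainthm} and~\ref{entupperboundthm} finish — the paper packages the corner at $(1,0)$ in Lemma~\ref{cornerlemma} via the one-sided derivatives $D_L\cP(1)\le-\alpha_1$, $D_R\cP(1)=0$, which is equivalent to your computation. Your direct handling of the endpoint $\alpha=\alpha_1$ through ergodicity of $\widetilde m$ is actually more careful than the paper's blanket appeal to Theorem~\ref{mainthm}; just note that the same device also gives the entropy statement there (which your sandwich on $(0,\alpha_1)$ leaves out), since $\widetilde m(\cL(\alpha_1))=1$ and the variational inequality~\eqref{e.pesinpitskel} yield $h(\cL(\alpha_1))\ge h(\widetilde m)=\alpha_1$, matching the upper bound $\cE(\alpha_1)=\alpha_1$ from Theorem~\ref{entupperboundthm}.
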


 We believe that  $\dim_{\rm H}\cL(0) = 3$ when $\cH \neq \emptyset$, but the methods of this paper  do not apply in that case.
 Since $\cH \subset \cL(0)$, Theorem~\ref{Hnonzerothm} gives $h(\cH) = 0$.  In higher dimensions, it is possible to have positive entropy on $\cH$; an example was given by Gromov~\cite{Gro:78}.

\begin{figure}
\begin{minipage}[c]{\linewidth}
\centering
a)\begin{overpic}[scale=.60%,grid,tics=10
  ]{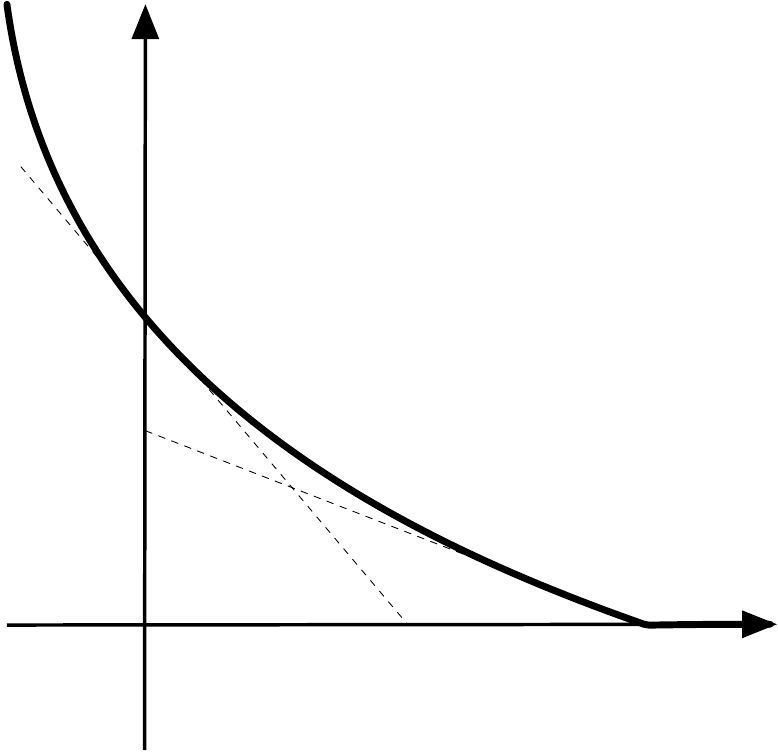}
      	\put(45,10){\tiny$\cD(\alpha_0)$}	
      	\put(70,10){\tiny$\cD(\alpha_1) = 1$} %\put(67,5){\tiny$=1$}	
      	\put(102,16){\small$q$}	
      	\put(25,90){\small$P(q\varphi^u)$}	
	\put(-10,55){\tiny$h(\cL(\alpha_0))$}
	\put(-10,49){\tiny$=h$}
	\put(-10,40){\tiny$h(\cL(\alpha_1))$}
\end{overpic}
\hspace{0.7cm}
b)\begin{overpic}[scale=.60%,grid,tics=10
  ]{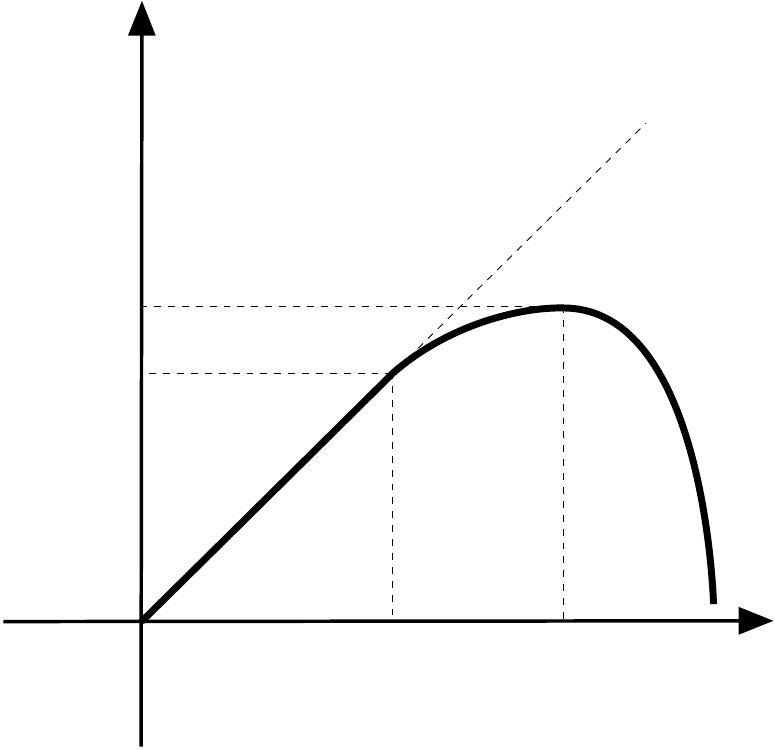}
      	\put(48,10){\tiny$\alpha_1$}
	\put(70,10){\tiny$\alpha_0$}	
      	\put(102,15){\small$\alpha$}	
      	\put(25,90){\small$\cE(\alpha)$}	
	\put(-10,56){\tiny$h$}
	\put(-10,47){\tiny$h(\cL(\alpha_1))$}\end{overpic}
\end{minipage}
\caption{As in Figure~\ref{fig.map6}, but in the case that $\cH\ne\emptyset$.}\label{fig.map5}
\end{figure}

 Figure~\ref{fig.map6}  shows the function $\cP\colon q \mapsto P(q\varphi^u)$ and its conjugate  $\alpha \mapsto \cE(\alpha)$ in the case when $\cH = \emptyset$ and  the flow $G$ is Anosov.
 The function $\cP$ is real analytic in this case. It is also strictly convex unless the curvature is constant, in which case it is linear (compare also Figure~\ref{fig.map33}).

 Figure~\ref{fig.map5} shows the same functions in the case when
 $\cH \neq \emptyset$. The results noted in Theorem~\ref{Hnonzerothm} are consequences of the corner that appears at $(1,0)$ in the graph of $\cP$ when $\cH \neq \emptyset$, as shown in Figure~\ref{fig.map5}.   We do not know if it is possible for
there to be other corners in the graph. Nor do we know if $\cP$ must
be strictly convex on $(-\infty,1)$ when $\cH \neq \emptyset$.

 In both figures, the graph of $\cP$ passes through $(0,h)$ and $(1,0)$. The function $\cP$ is differentiable at $q = 0$. The tangent line at $(0,h)$ is the graph of the function $q \mapsto
h - q\alpha_0$, where  $\alpha_0$ is the exponent for the measure of maximal entropy $\mu_{max}$.   Knieper~\cite{Kni:98} constructed this measure and showed that it is ergodic and the unique measure of maximal entropy.
The line $q \mapsto h(\widetilde m) - q\alpha_1$ passes through $(1,0)$ because the measure $\widetilde m$ is absolutely continuous with respect to the Liouville measure and thus $h(\widetilde m) = \chi(\widetilde m) = \alpha_1$ by Pesin's formula.

The organization of this paper is as follows. In Section~\ref{s:2} we collect some preliminary results on Lyapunov exponents and, in particular, discuss properties of the continuous invariant subbundle $F^u$ that gives rise to the \emph{continuous} potential $\varphi^u\colon T^1M\to\bR$.
In Section~\ref{s:3} we derive properties of the function $q\mapsto P(q\varphi^u)$ and of its conjugate $\alpha\mapsto \cE(\alpha)$. In particular, we prove Theorems~\ref{entupperboundthm} and~\ref{Hnonzerothm}.
In Section~\ref{s:4} we study uniformly hyperbolic subsystems.
In Section~\ref{s:5} we introduce a technique to bridge between uniformly hyperbolic Cantor sets and we provide all tools that are needed to prove Theorem~\ref{cor:dense}.
Section~\ref{s:6} collects spectral properties of uniformly hyperbolic sub-systems and contains the proofs of Proposition~\ref{nonemptyprop} and Theorems~\ref{mainthm} and~\ref{lem:lowe}.

%--------------------------------------------------------------------------------------------------
\section{Preliminaries}\label{s:2}
%--------------------------------------------------------------------------------------------------

%--------------------------------------------------------------------------------------------------
\subsection{Geometry}\label{sec:prelim-g}
%--------------------------------------------------------------------------------------------------

We consider a compact
%%G: changed closed into compact
 surface $M$ with a smooth Riemannian metric such that the curvature $K(p)$ is nonpositive for all $p \in M$.  Each vector $v\in TM$ determines a unique geodesic $\gamma_v(\cdot)$ such that $\dot\gamma_v(0)=v$. The geodesic flow $G=\{g^t\}_{t\in\bR}$ acts  by $g^t(v)=\dot\gamma_v(t)$. We study its restriction to the unit tangent tangent bundle $T^1M$, which is invariant under $G$.  A comprehensive reference for the material in this section is~\cite[Chaper 1]{Ebe:96}.

As usual, given $v\in T_pM$, we identify $T_vTM$ with  $T_pM\oplus T_pM$ via the isomorphism
\[
	\xi\mapsto (d\pi(\xi),C(\xi)),
\]
where $\pi: TM\to M$ denotes the canonical projection and $C\colon TTM\to TM$ denotes the connection map defined by the Levi Civita connection.
Under this isomorphism $T_vT^1M$ corresponds to $T_pM\oplus v^\perp$, where $v^\perp$ is the subspace of $T_pM$ orthogonal to $v$. The vector field that generates the geodesic flow is $V:v \mapsto (v,0)$.
The Riemannian metric on $M$ lifts to the {\em Sasaki metric} on $TM$ defined by
\[
 \llangle \xi,\eta\rrangle_v
 = \langle d\pi_v (\xi),d\pi _v(\eta)\rangle_{\pi(v)}
 	+ \langle C_v(\xi),C_v(\eta)\rangle_{\pi(v)}.
 \]

A Jacobi field $J$ along a geodesic $\gamma$ is a vector field along $\gamma$ that satisfies the Jacobi equation
\begin{equation}\label{e.j}
 J''(t) +R(J(t),\dot\gamma(t))\dot\gamma(t) = 0,
\end{equation}
where $R$ denotes the Riemannian curvature tensor of $M$ and $'$ denotes covariant differentiation along $\gamma$. It can be shown that if for some $t_0$ both $J(t_0)$ and $J'(t_0)$ are orthogonal to $\dot\gamma(t_0)$, then $J(t)$ and $J'(t)$ are orthogonal to $\dot\gamma(t)$ for all $t$. A Jacobi field with this property is said to be \emph{orthogonal}. If $J$ is an orthogonal Jacobi field, the Jacobi equation can be rewritten as
\begin{equation}\label{ej2}
 J''(t) + K(\gamma(t))J(t) = 0.
 \end{equation}
We can express an orthogonal Jacobi field as $J(t) = j(t)E(t)$, where $j$ is a scalar function and $E$ is a continuous unit vector field along $\gamma$ that is orthogonal to $\gamma$. It follows from \eqref{ej2} that
$u = j'/j$ satisfies the Riccati equation
 \begin{equation}\label{er}
 u'(t) + u(t)^2 +K(\gamma(t)) = 0.
  \end{equation}

Jacobi fields give a geometric description of the derivative of the geodesic flow.  Given $\xi\in T_v T^1M$, denote by $J_\xi$ the   Jacobi field along $\gamma_v$ with initial conditions $J_\xi(0)=d\pi_v(\xi)$ and $J_\xi'(0)=C_v(\xi)$. Then $dg^t_v(\xi)$  corresponds to $(J_\xi(t),J_\xi'(t))$. The orthogonal Jacobi fields correspond to an invariant  subbundle of $TT^1M$ whose fiber over $v$ is $v^\perp\oplus v^\perp$.

Nonpositivity of the curvature means that $\|J(t)\|$ is a convex function of $t$ for any Jacobi field $J$, because
 $$
 \frac12 \langle J,J \rangle'' =\langle J,J'\rangle'
	=\langle J',J'\rangle+\langle J'',J\rangle
	=\langle J',J'\rangle-\langle R(J,\dot\gamma_v),\dot\gamma_v,J\rangle) \geq 0.
$$
If $\|J\|$ is constant,  then $J'(t) = 0$ and $R(J(t),\dot\gamma(t)),\dot\gamma(t),J\rangle(t)) = 0$ for all $t$; in a surface, this means that the curvature vanishes everywhere along the geodesic.

We now describe the invariant subbundles $F^0$, $F^u$ and $F^s$, which were introduced in Section 1. The bundle $F^0$ is spanned by the vector field $V$ that generates the flow $G$. The other two bundles correspond to two special families of orthogonal Jacobi fields, the unstable Jacobi fields in the case of $F^u$ and the stable Jacobi fields in the case of $F^s$ . An orthogonal Jacobi field $J$ along a geodesic is called {\em unstable} (resp.~{\em stable}) if $\|J\|$ is nondecreasing (resp.~nonincreasing).
Since the length of a Jacobi  is a  convex function, it is easily seen that such Jacobi fields can be constructed as the limit as $T \to \infty$ of orthogonal Jacobi fields that vanish at $-T$ or at $T$. The one-dimensional distributions $F^u$ and $F^s$ obtained in this way are continuous
and invariant. The fact that elements of $F^u$ and $F^s$ correspond to orthogonal Jacobi fields ensures that $F^0$ is orthogonal to $F^u$ and $F^s$ in the Sasaki metric. %(see~\cite{Pes:77,Pes:81,Ebe:72})

Since the bundles $F^u$ and $F^s$ are both one dimensional,
%%G: we have $F^u_v \cap F^s_v  \neq \emptyset$
the intersection $F^u_v \cap F^s_v$ is non-trivial if and only if $F^u_v = F^s_v$.
An orthogonal Jacobi field $J$ that is both stable and unstable has $\|J(t)\| \leq \|J(0)\|$ for all $t$, and it follows from the convexity of $\|J\|$ that $\|J(t)\|$ is constant. This means that $J'(t) = 0$ for all $t$ and the curvature vanishes everywhere along the geodesic. On the other hand, if the curvature does vanish everywhere along the geodesic, any  Jacobi field with initial derivative $0$ has constant length. It follows from these remarks that $F^u_v = F^s_v$  if and only if the curvature along $\gamma_v$ is everywhere zero.

The distributions $F^u$ and $F^s$ are integrable, as are $F^{0u} = F^0\oplus F^u$ and $F^{0s} = F^0\oplus F^s$. The leaves of the integral foliations are closely related to the {\em horocycles}. If $\widetilde v$ is a lift of $v \in T^1M$ to a unit vector tangent to the universal cover $\widetilde M$, the stable horosphere $H^s(\widetilde v)$ and the unstable horosphere $H^u(\widetilde v)$ are the limits as $r \to \infty$ of the circles of radius $r$ with centers at $\gamma_{\widetilde v}(r)$ and $\gamma_{\widetilde v}(-r)$.  The limiting procedure used above to construct $F^u$ and $F^s$ is the infinitesimal version of this construction. We denote by $W^s(\widetilde v)$ (resp.~$W^u(\widetilde v)$) the set of unit normals to $H^s(\widetilde v)$ (resp.~$H^u(\widetilde v)$) pointing to the same side as $\widetilde v$. The projections $W^s(v)$ and $W^u(v)$ to $T^1M$ of these sets are independent of the choice of lift of $v$. The stable leaves $W^s$ are tangent to $F^s$ and the unstable leaves $W^u$ are tangent to $F^u$.
Flowing these leaves produces the weak stable manifold $W^{0s}(v) = \bigcup_{t \in \bR} g^tW^s(v)$ and the weak unstable manifold $W^{0u}(v) = \bigcup_{t \in \bR} g^tW^u(v)$, which are the leaves of the integral foliations for $F^{0s}$ and $F^{0u}$. Eberlein and O'Neill  in~\cite{EbeNei:73} constructed a natural boundary for the universal cover $\widetilde M$, which is analogous to the sphere at infinity for  the Poincar\'e disc. A lift to  $T^1\widetilde M$ of a leaf of the foliation $W^{0s}$ (resp.~$W^{0u}$) consists of all vectors that point forwards (resp.~backwards) to a common point in this boundary.

The foliations $W^s$ and $W^u$ are minimal; see e.g.~Theorem 6.1 in \cite{Ebe:73trans}.
%% G: gave reference to Eberlein 77

If $w$ and $w'$ are in the same leaf of the foliation $W^s$ and we choose compatible lifts $\widetilde w$ and $\widetilde w'$ to the universal cover, then the  distance between $\gamma_{\widetilde w}(t)$  and $\gamma_{\widetilde w'}(t)$ is a convex and nonincreasing function of $t$. 

\begin{lemma}\label{flatstriplemma}
	Suppose that $w$ and $w'$ are in the same leaf of the foliation $W^s$ and
$\Delta \eqdef  \lim_{t \to \infty} d(\gamma_{\widetilde w}(t),\gamma_{\widetilde w'}(t))> 0$. Then the geodesics $\gamma_w(t)$ and $\gamma_{w'}(t)$ converge to the edges of a flat strip of width $\Delta$ as $t \to \infty$.	
\end{lemma}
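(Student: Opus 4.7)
The plan is to use cocompactness of the deck action together with compactness of $T^1M$ to extract a pair of limit geodesics in $\widetilde M$ that are equidistant at distance $\Delta$, and then invoke the flat strip theorem for Hadamard manifolds.

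Write $f(t) := d(\gamma_{\widetilde w}(t), \gamma_{\widetilde w'}(t))$. By the convexity and nonincreasingness of $f$ recalled before the lemma, together with $\lim_{t \to \infty} f(t) = \Delta > 0$, one has $f(t) \geq \Delta$ for all $t$ and $f \searrow \Delta$ monotonically. For each $t$, let $\sigma_t \colon [0, f(t)] \to \widetilde M$ denote the unit-speed minimizing geodesic from $\gamma_{\widetilde w}(t)$ to $\gamma_{\widetilde w'}(t)$; its length is uniformly bounded by $f(0)$.

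Fix $t_n \to \infty$. By cocompactness of the action of $\pi_1(M)$ on $\widetilde M$, there exist deck transformations $\phi_n \in \pi_1(M)$ (isometries of $\widetilde M$ commuting with the geodesic flow) such that, along a subsequence, $\phi_n \cdot \dot\gamma_{\widetilde w}(t_n) \to \widetilde v_* \in T^1\widetilde M$. Since the transverse segments $\phi_n \cdot \sigma_{t_n}$ have uniformly bounded length, passing to a further subsequence yields $\phi_n \cdot \dot\gamma_{\widetilde w'}(t_n) \to \widetilde v_*' \in T^1\widetilde M$, and the segments $\phi_n \cdot \sigma_{t_n}$ converge to a geodesic segment $\widetilde\sigma_*$ of length $\Delta$ joining the footpoints of $\widetilde v_*$ and $\widetilde v_*'$. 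For each fixed $s \in \bR$, isometry invariance of distances gives
\[
  d\bigl(\gamma_{\widetilde v_*}(s), \gamma_{\widetilde v_*'}(s)\bigr) = \lim_{n \to \infty} d\bigl(\phi_n \cdot \gamma_{\widetilde w}(t_n + s), \phi_n \cdot \gamma_{\widetilde w'}(t_n + s)\bigr) = \lim_{n \to \infty} f(t_n + s) = \Delta.
\]

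By the flat strip theorem for Hadamard manifolds, two geodesics in $\widetilde M$ whose distance function is bounded on $\bR$ bound a flat strip whose width equals the (necessarily constant) common distance. Hence $\gamma_{\widetilde v_*}, \gamma_{\widetilde v_*'}$ bound a flat strip of width $\Delta$ in $\widetilde M$, which projects to a flat strip of the same width in $M$; the original geodesics $\gamma_w, \gamma_{w'}$ converge (subsequentially in $T^1 M$) to its edges via the vectors $\phi_n \cdot \dot\gamma_{\widetilde w}(t_n) \to \widetilde v_*$ and $\phi_n \cdot \dot\gamma_{\widetilde w'}(t_n) \to \widetilde v_*'$. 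The main subtlety is the deck-transformation bookkeeping that produces the equidistance identity displayed above; once this identity is in hand, the flat strip theorem yields the conclusion immediately.
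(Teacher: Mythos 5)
Your proof is correct and follows essentially the same route as the paper: extract a limit along times $t_n\to\infty$ (your cocompactness/deck-transformation argument in $T^1\widetilde M$ is just the universal-cover reformulation of the paper's use of compactness of $T^1M$), observe that the limiting pair of geodesics is at constant distance $\Delta$, and conclude with the Eberlein--O'Neill flat strip theorem.
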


\begin{proof}
	We can choose a sequence $t_n \to \infty$ such that the vectors $g^{t_n}(w)$ and $g^{t_n}(w')$ converge to a pair of vectors $v$ and $v'$ in the same leaf of $W^s$ such that $d(\gamma_v(t),\gamma_{v'}(t)) = \Delta$ for all $t$. The flat strip theorem (\cite[Proposition 5.1]{EbeNei:73})
	says that these geodesics bound a flat strip (a totally geodesic and isometric immersion of the product of $\bR$ with an interval).
\end{proof}

We will use the following version of a Shadowing Lemma (see~\cite[Closing Lemma 4.5.15]{Ebe:96} and~\cite[Theorem 7.1]{CouSha:} for proofs in the particular case of shadowing a recurring geodesic).

\begin{lemma}[Shadowing Lemma]\label{lem:dosha}
 Given $\kappa > 0$, $\tau > 0$ and $\varepsilon > 0$, there is $\delta > 0$ with the following property.

	Let $p_0$, $p_{\pm 1}$, $\ldots \in M$ be points at which the curvature of $M$ is at most $-\kappa^2$. 	Suppose $v_0$, $v_{\pm 1}$, $\ldots\in T^1M$ are such that the footpoint of $v_j$ is in $B(p_j,\delta)$ for all $j$. Suppose  that there are times $\ldots<T_{-1}<T_0<T_1<\ldots$ such that $T_{j+1}-T_j\ge\tau$ and $d(\dot\gamma_{v_j}(T_{j+1}-T_j),v_{j+1})<\delta$ for all $j$. Then there is a geodesic $\gamma$ such that for all $j$ we have $d(\dot\gamma(t),\dot\gamma_{v_j}(t-T_j))<\varepsilon$ for $T_j\le t\le T_{j+1}$.

The geodesic $\gamma$ is unique up to reparametrization. It is periodic if the sequence of vectors $v_j$  is periodic.
\end{lemma}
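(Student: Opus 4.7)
The plan is to work in the universal cover $\widetilde M$, lift the pseudo-orbit data consistently, and construct the shadowing geodesic as a limit of honest geodesic arcs joining endpoints of successive segments. First I would fix $\delta$ very small in terms of $\kappa$, $\tau$ and $\varepsilon$. Starting from an arbitrary lift $\widetilde v_0$ of $v_0$, I would inductively select a lift $\widetilde v_{j+1}$ of $v_{j+1}$ whose footpoint $q_{j+1}\in\widetilde M$ lies within $\delta$ of $\gamma_{\widetilde v_j}(T_{j+1}-T_j)$ and whose vector part is within $\delta$ of $\dot\gamma_{\widetilde v_j}(T_{j+1}-T_j)$ in the Sasaki metric. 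The concatenation of the arcs $\gamma_{\widetilde v_j}|_{[0,T_{j+1}-T_j]}$ is then a broken geodesic that serves as a pseudo-orbit in $\widetilde M$.

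For each $N\ge 1$, let $\eta_N$ be the unique (by Cartan--Hadamard) geodesic of $\widetilde M$ joining $q_{-N}$ to $q_N$, parametrized so that $\eta_N(T_{-N})=q_{-N}$ and $\eta_N(T_N)=q_N$. The key step is a quantitative convexity estimate: $d(\eta_N(t),\gamma_{\widetilde v_j}(t-T_j))<\varepsilon/2$ for every $j$ with $-N\le j<N$ and every $t\in[T_j,T_{j+1}]$, with constants depending only on $\kappa$ and $\tau$. This I would prove using the Jacobi equation~\eqref{ej2} and the Riccati equation~\eqref{er} to convert the curvature bound $K(p_j)\le -\kappa^2$ at the discrete footpoints into a definite multiplicative spreading of perpendicular Jacobi fields over each interval of length at least $\tau$, which in turn yields a geometric-series bound on the propagation of the $\delta$-errors. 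This is the quantitative input underlying the Closing Lemma in~\cite{Ebe:96}.

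The bi-infinite case follows from a diagonal compactness argument: after projecting to $T^1M$ and passing to a subsequence, the $\eta_N$ converge, uniformly on compact time intervals, to a geodesic $\gamma$ that shadows $(v_j)$ within $\varepsilon$. Uniqueness up to reparametrization is where Lemma~\ref{flatstriplemma} enters: two distinct shadowing geodesics would bound a flat strip in $\widetilde M$, but both pass within $\varepsilon+\delta$ of each point $p_j$ where $K\le -\kappa^2$, contradicting the fact that a flat strip consists of vectors tangent to geodesics along which the curvature vanishes identically. Finally, suppose the data are periodic in the sense that $v_{j+N}=v_j$ and $T_{j+N}-T_j$ is independent of $j$. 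Then shifting indices by $N$ produces the same pseudo-orbit translated in time, so by uniqueness the shadowing geodesic must coincide with its own time translate, i.e.\ it is periodic.

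The main obstacle is the quantitative spreading estimate in the finite case, because the hypothesis provides the curvature bound $K\le -\kappa^2$ only at the discrete footpoints $p_j$, while the arcs between them may traverse regions where $K$ vanishes and no hyperbolic spreading occurs. The role of the lower bound $T_{j+1}-T_j\ge\tau$ is decisive: combined with the pointwise curvature bound at $p_j$, it guarantees that each segment contributes a definite multiplicative factor of Jacobi-field growth, which is exactly what is needed to close the geometric series. Once this quantitative estimate is available, existence, uniqueness, and periodicity reduce to routine compactness and nonpositive-curvature convexity arguments, as in the references cited in the statement.
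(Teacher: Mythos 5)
Your architecture (lift to $\widetilde M$, join far endpoints of the broken geodesic by chords $\eta_N$, prove a uniform closeness estimate, take a limit, and get uniqueness from Lemma~\ref{flatstriplemma}) is a genuinely different route from the paper's, which obtains Lemma~\ref{lem:dosha} by iterating the local two-sided shadowing statement of Lemma~\ref{lem:regneigh}, following the closing lemma in \cite[4.5.15]{Ebe:96} and \cite{CouSha:}. The difficulty is that your key quantitative step is not just unproved but cannot hold as you formulated it, because of the flow direction. The Jacobi/Riccati spreading you invoke near the $p_j$ acts only on directions transverse to the geodesic; a $\delta$-error that is a slide \emph{along} the orbit is neither expanded nor contracted, and such errors accumulate. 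Concretely, take all $v_j=\dot\beta(S_j)$ on a single unit-speed geodesic $\beta$ that passes through the set $\{K\le-\kappa^2\}$ at times $S_j$ with gaps $\ge\tau+1$, and put $T_{j+1}-T_j=S_{j+1}-S_j-\delta/2$ for $j<0$ and $T_{j+1}-T_j=S_{j+1}-S_j$ for $j\ge0$: the hypotheses of the lemma are satisfied (each jump is a forward slide by $\delta/2$ along the orbit). But if $\varepsilon$ is below the injectivity radius and a unit-speed geodesic $\sigma$ satisfied $d(\dot\sigma(T_j),v_j)<\varepsilon$ for all $j$, then lifting coherently to $\widetilde M$ and comparing $d(\widetilde\sigma(T_j),\widetilde\sigma(T_k))=T_k-T_j$ with $d(\widetilde\beta(S_j),\widetilde\beta(S_k))=S_k-S_j$ gives $(k-j)\delta/2\le 2\varepsilon$ for all $j<k\le0$, which is absurd. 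Likewise your affinely reparametrized chord $\eta_N$ misplaces the middle junctions by an amount of order $N\delta$ when the slides are unevenly distributed, so the estimate $d(\eta_N(t),\gamma_{\widetilde v_j}(t-T_j))<\varepsilon/2$ with the same $t$ cannot hold uniformly in $N$, and no geometric-series bound on transverse Jacobi fields can rescue it. The conclusion has to be understood, and proved, up to a local time change; the missing idea in your write-up is the separation of the error into its longitudinal (time) and transverse components, with the hyperbolicity near the $p_j$ controlling only the latter and a reparametrization absorbing the former. This bookkeeping is exactly what iterating Lemma~\ref{lem:regneigh} provides, since each application re-anchors the time origin at a junction.

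Two smaller points. Your $\eta_N$, parametrized to reach $q_{\pm N}$ at times $T_{\pm N}$, is not unit speed, so $\dot\eta_N$ does not lie in $T^1M$; after the repair above you should work with unit-speed chords and carry the time change explicitly. In the uniqueness step, ``passes within $\varepsilon+\delta$ of $p_j$'' contradicts flatness of the strip only if $\varepsilon+\delta$ is smaller than a radius $r_0>0$ (uniform over $\{K\le-\kappa^2\}$ by compactness of $M$) on which the curvature remains negative, so $\varepsilon$ must be shrunk accordingly; with that caveat the flat strip argument and the deduction of periodicity from uniqueness are fine.
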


Lemma~\ref{lem:dosha} is proved by iterated applications of the following  lemma.

 \begin{lemma}\label{lem:regneigh}
	Given $\kappa > 0$ and $\varepsilon > 0$, there is $\delta > 0$ with the following property.
	Let $p\in M$ be a point with $K(p) \leq -\kappa^2$. If $v$, $w\in T^1M$ have footpoints in $B(p,\delta)$ and satisfy $d(v,w)<\delta$, then there is a unit speed geodesic $\gamma$ such that $d(\dot\gamma(t),\dot\gamma_v(t))<\varepsilon$ and $d(\dot\gamma(-t),\dot\gamma_w(-t))<\varepsilon$ for all $t\ge 0$. The geodesic $\gamma$ is unique up to reparamerization.
\end{lemma}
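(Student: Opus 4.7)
The plan is to construct $\gamma$ as the unique geodesic whose initial vector $u$ lies in the intersection of the strong stable leaf through $v$ and the weak unstable leaf through $w$, and then to derive the shadowing estimates from the nonincreasing behavior of distances within stable (resp.\ unstable) leaves under forward (resp.\ backward) flow. The key input is a uniform quantitative transversality of the continuous subbundles $F^0,F^s,F^u$ near $p$. As noted in the discussion preceding the lemma, $F^s_v=F^u_v$ forces the curvature along $\gamma_v$ to vanish identically, so $F^s_v\ne F^u_v$ whenever $K(\pi(v))<0$. By uniform continuity of $K$ on the compact manifold $M$, there exists $\delta_0=\delta_0(\kappa)>0$ with $K\le-\kappa^2/2$ on $B(p,\delta_0)$ whenever $K(p)\le-\kappa^2$. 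On the compact set $\{v\in T^1M\colon K(\pi(v))\le-\kappa^2/2\}$, continuity of $F^s,F^u$ and compactness yield a uniform lower bound $\theta(\kappa)>0$ on the pairwise angles between $F^0_v,F^s_v,F^u_v$.

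Using this uniform transversality and the continuous dependence of the integral leaves $W^s,W^u$ on the base vector, one obtains a local product structure. For $\delta$ sufficiently small (depending on $\kappa$ and $\varepsilon$), if $v,w\in T^1M$ have footpoints in $B(p,\delta)$ and $d(v,w)<\delta$, then the one-dimensional local strong stable leaf $W^s_r(v)$ (tangent to $F^s$) meets the two-dimensional local weak unstable leaf $W^{0u}_r(w)$ (tangent to $F^0\oplus F^u$) transversally in a single point $u\in T^1M$ with $d(u,v),d(u,w)<\varepsilon/2$. Moreover, one can write $u=g^\tau w_1$ with $w_1\in W^u(w)$ and $|\tau|<\varepsilon/2$, where $\tau\to 0$ as $d(v,w)\to 0$.

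Let $\gamma$ be the unit speed geodesic with $\dot\gamma(0)=u$. Since $u\in W^s(v)$, the vectors $\dot\gamma(t)=g^tu$ and $\dot\gamma_v(t)=g^tv$ lie on a common stable horocycle at each time, and in nonpositive curvature the Sasaki distance $d(g^tu,g^tv)$ is nonincreasing in $t$; this follows from the convexity and nonincreasing length of the stable Jacobi field representing the variation. Hence $d(\dot\gamma(t),\dot\gamma_v(t))<\varepsilon/2<\varepsilon$ for all $t\ge 0$. For the backward estimate, $\dot\gamma(-t)=g^{-t}u=g^{\tau-t}w_1=\dot\gamma_{w_1}(\tau-t)$, so
\[
   d(\dot\gamma(-t),\dot\gamma_w(-t))
   \le d(g^{\tau-t}w_1,g^{\tau-t}w)+d(g^{\tau-t}w,g^{-t}w).
\]
The first term is bounded by a constant multiple of $d(w_1,w)$ (equal to $d(w_1,w)$ once $\tau-t\le 0$, by nonincreasing length of the unstable Jacobi field in reverse time, with continuous control on the short forward interval $0<\tau-t\le\tau$), and the second term is at most $|\tau|$. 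Choosing $\delta$ small enough to make both pieces $<\varepsilon/2$ yields $d(\dot\gamma(-t),\dot\gamma_w(-t))<\varepsilon$ for $t\ge 0$.

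Uniqueness up to reparametrization follows from the flat strip theorem. Any other geodesic $\gamma'$ satisfying the conclusions stays within $\varepsilon$ of $\gamma_v$ forward and within $\varepsilon$ of $\gamma_w$ backward, hence $\gamma$ and $\gamma'$ are both forward and backward asymptotic to each other. By Lemma~\ref{flatstriplemma} (see also \cite[Proposition 5.1]{EbeNei:73}), $\gamma$ and $\gamma'$ would bound a flat strip of positive width; this strip would contain points arbitrarily close to $p$, contradicting $K(p)\le-\kappa^2<0$. The main obstacle is establishing the local product structure quantitatively, since the foliations $W^s,W^u$ are in general only continuous; this is overcome by exploiting the uniform transversality of the continuous subbundles together with continuous dependence of the leaves on their base vectors, all taking place in a fixed neighborhood of $p$ where the curvature is uniformly negative.
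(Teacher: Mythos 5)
Your construction is essentially correct, and it is worth noting that the paper does not prove Lemma~\ref{lem:regneigh} at all: it is the building block quoted for the Shadowing Lemma, with the reader referred to \cite{Ebe:96} and \cite{CouSha:}, where the argument runs through the ideal boundary (one joins $\gamma_w(-\infty)$ to $\gamma_v(+\infty)$ by a geodesic, using a visibility-type argument at the point of negative curvature, and gets the estimates from convexity of the distance between geodesics). Your route is the dynamical counterpart: intersect the local strong stable leaf $W^s_{\rm loc}(v)$ with the local weak unstable leaf $W^{0u}_{\rm loc}(w)$. The ingredients you invoke are the right ones and are available in this setting: $K(\pi(v))<0$ forces $v\in\cR$, hence $F^s_v\ne F^u_v$; since $F^0$ is Sasaki-orthogonal to $F^s$ and $F^u$, a uniform angle bound between $F^s$ and $F^{0u}$ on the compact set $\{K\circ\pi\le-\kappa^2/2\}$ follows by continuity and compactness; and the horospherical leaves are uniformly $C^1$ (they depend continuously on the base vector), so at scales small compared with the modulus of continuity of the bundles the curve and the disc are nearly affine and uniformly transverse, giving a unique intersection point $u$ close to $v$ and $w$. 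This local product structure step is the real content of the lemma and deserves the short chart (cone-field) argument to be written out, but as sketched it is sound.

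Two smaller points should be repaired. First, the Sasaki distance $d(g^tu,g^tv)$ for $u\in W^s(v)$ is not literally nonincreasing; what the Jacobi-field computation gives (the same calculation as in Lemma~\ref{nonneglemma}, with the sign reversed for stable fields, since there $jj'\le 0$) is that $\lVert dg^t\xi\rVert$ is nonincreasing in $t\ge0$ for $\xi\in F^s$. Applying this to a path from $v$ to $u$ inside the leaf bounds $d(g^tu,g^tv)$ by the intrinsic leaf distance from $v$ to $u$, which at small scales is comparable to (not equal to) $d(u,v)$; this costs only a uniform constant, absorbed by shrinking $\delta$, and the symmetric remark applies to the backward estimate along $W^u$. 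Second, the uniqueness argument via the flat strip theorem (\cite[Proposition 5.1]{EbeNei:73}; your citation of Lemma~\ref{flatstriplemma} is not quite the right statement, since that lemma concerns two vectors on one stable leaf) requires $\varepsilon$ to be small: small enough that $\varepsilon$-closeness in $T^1M$ lifts to closeness of chosen lifts in the universal cover, and small enough that the flat strip would meet the ball around $p$ on which $K\le-\kappa^2/2$. For such $\varepsilon$ the bounded, convex distance between the two lifted geodesics is constant, and a positive constant produces a flat strip through the region of negative curvature, a contradiction; this is the intended reading of the uniqueness claim (for huge $\varepsilon$ the statement is vacuous), but you should state the smallness assumption explicitly.
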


%--------------------------------------------------------------------------------------------------
\subsection{Ergodic theory}\label{sec:prelim-e}
%--------------------------------------------------------------------------------------------------

As explained in the introduction, we study the Lyapunov exponent $\chi$ associated to the invariant subbundle $F^u$.  The forward Lyapunov exponent is the forward Birkhoff average of the function $\varphi^u$ defined in \eqref{phidef}:
 $$
 \chi_+(v) = \lim_{T \to \infty} - \frac1T\log\|dg^T|_{F^u_v}\|
               = \lim_{T \to \infty} - \frac1T \int_0^T \varphi^u(t)\,dt.
 $$
 The backward Lyapunov exponent is the backward Birkhoff average:
 $$
 \chi_-(v) = \lim_{T \to \infty} \frac1T\log\|dg^{-T}|_{F^u_v}\|
               =  \lim_{T \to \infty} - \frac1T \int_{-T}^0 \varphi^u(t)\,dt.
 $$
 The exponent $\chi(v)$ is defined when $ \chi_+(v)$ and $ \chi_-(v)$ both exist and are equal.

 The function $\varphi^u$ is continuous because the bundle $F^u$ is continuous. It vanishes on $\cH$ because unstable Jacobi fields are covariantly constant along geodesics tangent to vectors in $\cH$.

 \begin{lemma}\label{nonneglemma}
 	$\varphi^u \leq 0$.
 \end{lemma}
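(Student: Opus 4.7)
The plan is to unpack the definition of $\|dg^t|_{F^u_v}\|$ in terms of a Jacobi field, then differentiate in $t$ and check that the derivative at $t=0$ is nonnegative. Fix $v \in T^1M$ and a nonzero $\xi \in F^u_v$. From the description of $F^u$ in Section~\ref{sec:prelim-g}, $\xi$ corresponds under the Sasaki identification $T_v T^1M \cong T_{\pi(v)}M \oplus v^\perp$ to $(J(0),J'(0))$ for some nonzero unstable orthogonal Jacobi field $J$ along $\gamma_v$, and $dg^t(\xi)$ corresponds to $(J(t),J'(t))$. Consequently
\[
  \|dg^t(\xi)\|^2 \;=\; \|J(t)\|^2 + \|J'(t)\|^2,
\]
and since $F^u_v$ is one-dimensional, $\|dg^t|_{F^u_v}\|$ equals the ratio of this quantity at $t$ and at $0$.

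Next I would use that $M$ is a surface: the orthogonal complement of $\dot\gamma_v(t)$ in $T_{\gamma_v(t)}M$ is one-dimensional, so I can choose a parallel unit vector field $E(t)$ along $\gamma_v$ orthogonal to $\dot\gamma_v$ and write $J(t) = j(t) E(t)$. Then $J'(t) = j'(t) E(t)$, $\|J(t)\| = |j(t)|$ and $\|J'(t)\| = |j'(t)|$, and the Jacobi equation reduces to the scalar form $j'' + K(\gamma_v(t))\,j = 0$ as in \eqref{ej2}.

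With this setup the key computation is direct:
\[
  \tfrac{d}{dt}\bigl(j^2 + (j')^2\bigr) \;=\; 2jj' + 2j'j'' \;=\; 2j'\bigl(j + j''\bigr) \;=\; 2\,j(t)\,j'(t)\,\bigl(1 - K(\gamma_v(t))\bigr).
\]
The two sign conditions now combine: nonpositivity of the curvature gives $1 - K(\gamma_v(t)) \geq 1 > 0$, while the assumption that $J$ is unstable means $\|J\|^2 = j^2$ is nondecreasing, so $(j^2)' = 2jj' \geq 0$ at every $t$. Multiplying shows that $t \mapsto j(t)^2 + j'(t)^2$ is nondecreasing; equivalently, $t \mapsto \|dg^t|_{F^u_v}\|$ is nondecreasing. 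Since this function equals $1$ at $t=0$, its logarithm is nondecreasing with value $0$ at $t=0$, whence
\[
  \lim_{t \to 0}\tfrac{1}{t}\log\|dg^t|_{F^u_v}\| \;\geq\; 0,
\]
and the definition \eqref{phidef} of $\varphi^u$ yields $\varphi^u(v) \leq 0$.

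I do not expect a real obstacle; the computation is a one-line exercise in the Jacobi/Riccati framework already set up in the paper. The only point that uses the two-dimensionality of $M$ is the reduction to $J = jE$ with $E$ parallel, which turns the sign of $jj'$ into a direct consequence of the definition of ``unstable''. In higher dimensions one would instead have to work with the symmetric operator associated to the family of unstable Jacobi fields, but here the scalar reduction makes everything immediate.
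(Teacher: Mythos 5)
Your proof is correct and follows essentially the same route as the paper's: both reduce to the scalar Jacobi field $J=jE$, use $jj'\ge 0$ (from $J$ being unstable) together with $K\le 0$ to show that the Sasaki norm $\bigl(j^2+(j')^2\bigr)^{1/2}$ of $\bigl(J(t),J'(t)\bigr)$ is nondecreasing, and conclude $\varphi^u\le 0$ from the definition \eqref{phidef}. The only cosmetic difference is that you work directly with the scalar equation $j''+Kj=0$ while the paper phrases the same computation via the curvature tensor before specializing to the surface case.
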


 \begin{proof}Let $J$ be an unstable Jacobi field along a geodesic $\gamma$. Choose a continuous field $E$ along $\gamma$ of unit vectors orthogonal to $\gamma$ and define the function $j$ by $J(t) = j(t)E(t)$. Then $jj' \geq 0$ because $J$ is an unstable Jacobi field and $\|J\|' = 2jj'$. Using this we obtain
 \begin{align*}
 \frac12\frac d{dt}\|(J(t),J'(t))\|^2 &=  \frac12\frac d{dt}\Big( \langle J(t),J(t) \rangle +  \langle J'(t),J'(t) \rangle\Big)\\
                    &= \langle J'(t),J(t) \rangle + \langle J''(t),J'(t) \rangle  \\
                    &= \langle J'(t),J(t) \rangle + \langle R(J(t),\dot\gamma)\dot\gamma,J'(t) \rangle  \\
                    &= j'(t)j(t) - K(\gamma(t))j(t)j'(t) \geq 0.
 \end{align*}
Hence  $\lVert(J(t),J'(t))\rVert$ is nondecreasing, and it follows that $\varphi^u \leq 0$.
 \end{proof}

 The following lemma gives an equivalent characterization of $\chi(\cdot)$.

\begin{lemma}
%%G: see new version
	For every $v\in T^1M$ Lyapunov regular we have
	\[
		\chi(v)
		= \lim_{t\to\infty}\frac 1 T \int_0^Tu(t)\,dt,
	\]
	where $u(t)=\lVert J_\xi'(t)\rVert/\lVert J_\xi(t)\rVert$, $\xi\in F_v^u$, satisfies $u'(t)=-u^2(t)-K(\gamma_v(t))$.
\end{lemma}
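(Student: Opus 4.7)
The plan is to reduce both sides to integrals of $u=j'/j$, obtained by writing the unstable Jacobi field $J_\xi$ along $\gamma_v$ in the form $J_\xi(t)=j(t)E(t)$, where $E$ is a parallel unit normal field along $\gamma_v$. Such an $E$ exists and is unique up to sign because $M$ is two-dimensional, so the normal bundle to $\dot\gamma_v$ has rank one and any unit section is automatically parallel. An unstable Jacobi field cannot vanish: the convexity and monotonicity of $\|J_\xi\|$ force any zero to propagate to a half-line, and then to all of $\bR$ by uniqueness for the Jacobi equation. Hence $j>0$ can be arranged, $u=j'/j$ is then well defined and nonnegative, and the Riccati equation $u'=-u^2-K(\gamma_v)$ falls out of the scalar Jacobi equation $j''+K(\gamma_v)j=0$.

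Next, since $E$ is parallel, $J'_\xi=j'E$, so in the Sasaki metric
\[
\|dg^t(\xi)\|^2=\|J_\xi(t)\|^2+\|J'_\xi(t)\|^2=j(t)^2\bigl(1+u(t)^2\bigr).
\]
Because $F^u$ is one-dimensional this yields
\[
\log\|dg^T|_{F^u_v}\|=\int_0^T u(s)\,ds+\tfrac12\log\frac{1+u(T)^2}{1+u(0)^2}.
\]
The cocycle identity on the line bundle $F^u$ identifies the left-hand side with $\int_0^T(-\varphi^u)(g^s v)\,ds$, whose $T$-average converges to $\chi(v)$ by Lyapunov regularity. So division by $T$ gives the claim, provided the remainder $\tfrac12\log\bigl((1+u(T)^2)/(1+u(0)^2)\bigr)$ is $o(T)$.

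The tail is controlled by Riccati alone: setting $k=\sup_M\sqrt{-K}$ (finite by compactness of $M$), one has $u'=-u^2-K\le -u^2+k^2$, so $u'<0$ whenever $u>k$, giving the \emph{a priori} bound $u(t)\le\max\{u(0),k\}$ for all $t\ge 0$. Hence the tail is $O(1)$ and drops out after division by $T$. The only point that requires more than bookkeeping is this uniform bound on $u(t)$, and it follows from a single ODE comparison.
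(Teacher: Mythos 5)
Your argument is correct and follows essentially the same route as the paper's proof: both reduce $\chi(v)=\lim_T\frac1T\log\lVert dg^T|_{F^u_v}\rVert$ to $\lim_T\frac1T\int_0^T u(t)\,dt$ by observing that the Sasaki norm $\lVert dg^t_v(\xi)\rVert$ differs from $\lVert J_\xi(t)\rVert=j(t)$ only by a uniformly bounded factor, with $\log\bigl(j(T)/j(0)\bigr)=\int_0^T u$. The only difference is cosmetic: the paper cites Eberlein's bound $\lVert J_\xi'\rVert\le k\,\lVert J_\xi\rVert$ to control that factor, while you obtain the same $O(1)$ control self-containedly via the Riccati comparison $u(t)\le\max\{u(0),k\}$ (together with the nonvanishing of the unstable Jacobi field).
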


\begin{proof}
	By compactness, there exists $k$ such that the curvature is bounded by $-k^2\le K$. Hence, it follows from~\cite[Proposition 2.11]{Ebe:73b} that $\lVert J_\xi'(t)\rVert\le k\,\lVert J_\xi(t)\rVert$. Thus, $\lVert J_\xi(t)\rVert\le\lVert dg^t_v(\xi)\rVert\le \sqrt{1+k^2}\,\lVert J_\xi(t)\rVert$ and hence
	\[\begin{split}
		\lim_{T\to\infty}\frac 1 T \log\, \lVert dg^T|_{F^u_v}\rVert
		= \lim_{T\to\infty}\frac1T\log\frac{\lVert J_\xi(T)\rVert}{\lVert J_\xi(0)\rVert}
		=  \lim_{T\to\infty}\frac1T\int_0^T u(t)\,dt,
	\end{split}\]	
where $u(t)\eqdef \lVert J_\xi'(t)\rVert/\lVert J_\xi(t)\rVert$ satisfies $u'(t)=-u^2(t)-K(\gamma_v(t))$.
\end{proof}

 We denote by $\cM$ the set of all $G$-invariant probability measures on $T^1M$ and by $\cM_{\rm e}$ the subset of all ergodic measures in $\cM$.
 For a measure $\mu \in \cM$, we define
  $$
  \chi(\mu) = - \int \varphi^u\,d\mu.
  $$
  For a set $Z \subset T^1M$, we denote by $\cM(Z)$ (resp.~$\cM_{\rm e}(Z)$) the set of measures in $\cM$ (resp.~$\cM_{\rm e}$) that assign full measure to $Z$.
  If $\mu \in \cM_{\rm e}$, we have $\chi(v) = \chi(\mu)$ for $\mu$-a.e.~$v \in T^1M$ and thus $\mu \in \cM_{\rm e}(\cL(\chi(\mu)))$.

There are two measures in $\cM_{\rm{e}}$ of particular significance. The first is the unique measure of maximal entropy $\mu_{\rm max}$. This was constructed by Knieper in \cite{Kni:98}. The second is the measure $\widetilde m$ described in the introduction, which is obtained by restricting the Liouville measure to $\cR$ and normalizing.

 \begin{lemma}\label{alpha1lem}
 $\alpha_1 = \chi(\widetilde m) > 0$.
 \end{lemma}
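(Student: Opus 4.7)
The plan is to show $\chi(\widetilde m)=-\int\varphi^u\,d\widetilde m>0$ by exhibiting an open set of positive $\widetilde m$-measure on which the integrand $-\varphi^u$ is strictly positive. By Lemma~\ref{nonneglemma} we already have $-\varphi^u\ge 0$, so only the strict inequality requires work.

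\emph{Pointwise analysis.} For $v\in T^1M$ I would take a unit $\xi=(J(0),J'(0))\in F^u_v$ and write $J=jE$, where $E$ is a parallel unit field orthogonal to $\dot\gamma_v$ and $j>0$ (nonvanishing because $\|J\|$ is nondecreasing and $J\not\equiv 0$). Repeating the calculation in the proof of Lemma~\ref{nonneglemma} and using $j(0)^2+j'(0)^2=1$ yields
\[
\varphi^u(v)=-\frac{u(0)\bigl(1-K(\pi v)\bigr)}{1+u(0)^2},\qquad u(0)=\frac{j'(0)}{j(0)}\ge 0,
\]
so $\varphi^u(v)<0$ as soon as $u(0)>0$. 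Now apply the Riccati equation \eqref{er}: along $\gamma_v$ one has $u'=-u^2-K(\gamma_v)$ and $u\ge 0$ globally (again because $\|J\|$ is nondecreasing). If $u(0)=0$ then $0$ is a global minimum of $u$, so $u'(0)=0$, and Riccati forces $K(\pi v)=0$. Contrapositively, $K(\pi v)<0$ implies $u(0)>0$, and hence $-\varphi^u(v)>0$.

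\emph{Producing a positive-measure set.} Since $M$ has negative Euler characteristic, the Gauss--Bonnet theorem gives $\int_M K\,dA=2\pi\chi(M)<0$, so the open set $M^-\eqdef\{p\in M:K(p)<0\}$ has positive Lebesgue measure. Its preimage $\pi^{-1}(M^-)\subset T^1M$ is open, has positive Liouville measure, and lies in $\cR$ because every geodesic through a point of $M^-$ meets a point of negative curvature. Consequently $\widetilde m(\pi^{-1}(M^-))>0$, and combining with the previous step,
\[
\alpha_1=\chi(\widetilde m)=-\int_{T^1M}\varphi^u\,d\widetilde m\ \ge\ -\int_{\pi^{-1}(M^-)}\varphi^u\,d\widetilde m\ >\ 0.
\]

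The argument is short and I do not expect a serious obstacle; the only subtle point is the use of the global nonnegativity of the Riccati variable $u$, which is what turns the local Riccati identity $u'(0)=-K(\pi v)$ into the pointwise conclusion that $u(0)>0$ wherever $K(\pi v)<0$.
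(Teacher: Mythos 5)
Your proof is correct, and it is a direct, quantitative version of the paper's argument rather than a verbatim match. The paper argues softly by contradiction: since $\widetilde m$ has positive density on $\cR$ and $\varphi^u$ is continuous and nonpositive (Lemma~\ref{nonneglemma}), $\chi(\widetilde m)=0$ would force $\varphi^u\equiv 0$ on $\cR$, which would make the curvature vanish identically, contradicting the negative Euler characteristic; the implication ``$\varphi^u\equiv 0$ on $\cR$ $\Rightarrow$ $K\equiv 0$'' is left to the reader, resting on the flatness criterion for unstable Jacobi fields discussed in Section~\ref{sec:prelim-g}. You instead prove the localized contrapositive explicitly: the Sasaki-norm derivative formula $\varphi^u(v)=-u(0)\bigl(1-K(\pi v)\bigr)/\bigl(1+u(0)^2\bigr)$ together with the Riccati equation \eqref{er} and the global nonnegativity of $u$ shows $\varphi^u(v)<0$ whenever $K(\pi v)<0$ (if $u(0)=0$ were a global minimum, $u'(0)=0$ would force $K(\pi v)=0$), and Gauss--Bonnet supplies an open set of negative curvature whose unit-tangent preimage lies in $\cR$ and has positive $\widetilde m$-measure. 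Both proofs ultimately hinge on the same two facts -- negative Euler characteristic produces negative curvature somewhere, and unstable Jacobi fields detect it through $\varphi^u$ -- but your route buys an explicit pointwise statement ($K<0$ at the footpoint implies strict expansion in $F^u$ there) and avoids any appeal to density of $\cR$ or a continuity-at-the-limit argument, at the modest cost of the Riccati computation; the paper's route is shorter on the page because it delegates the geometric input to the earlier discussion of Jacobi fields.
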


 \begin{proof}The function $\varphi^u$ is continuous and nonpositive by Lemma~\ref{nonneglemma}. Since $\widetilde m$ is absolutely continuous and has positive density throughout $\cR$, we can have $\chi(\widetilde m) = 0$ only if $\varphi^u = 0$ throughout $\cR$.
 But then the curvature of $M$ would vanish everywhere, which is impossible since $M$ has negative Euler characteristic.
 \end{proof}

 It is important for us that the forward exponent $\chi_+$ is constant on leaves of the foliation $W^{0s}$ and the backward exponent $\chi_-$ is constant on leaves of the foliation $W^{0u}$. We give the proofs for the forward exponent.

 \begin{proposition}\label{cor:exp}
	Suppose $w$ and $w'$ are in the same leaf of the foliation $W^s$. Then $w$ and $w'$ have the same forward Lyapunov exponent. This means that the
forward Lyapunov exponent is defined at $w$ if and only if  it is defined at $w'$, and the two exponents agree if they are both defined.
\end{proposition}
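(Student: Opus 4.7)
The strategy is to compare the two forward Birkhoff averages
$\tfrac{1}{T}\int_0^T \varphi^u(g^t w)\,dt$ and $\tfrac{1}{T}\int_0^T \varphi^u(g^t w')\,dt$, whose negatives are $\chi_+(w)$ and $\chi_+(w')$ respectively (when these exist). I claim that the integrands satisfy $\varphi^u(g^t w) - \varphi^u(g^t w') \to 0$ as $t\to\infty$. Since $\varphi^u$ is bounded, continuous-time Cesàro averaging then guarantees that the two Birkhoff averages exist or fail to exist together and, when they exist, take the same value. This reduces the proposition to pointwise control of the difference along the forward orbits.

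Fix compatible lifts $\tilde w,\tilde w'\in T^1\widetilde M$, so that both are outward unit normals to a common stable horosphere, and set
$$\Delta \;\eqdef\; \lim_{t\to\infty} d\bigl(\gamma_{\tilde w}(t),\gamma_{\tilde w'}(t)\bigr) \;\geq\; 0,$$
which exists because the distance function is nonincreasing. Suppose first that $\Delta = 0$. The footpoints then converge in $\widetilde M$; since $g^t\tilde w$ and $g^t\tilde w'$ are both outward unit normals to the common (smooth) horosphere $H^s(g^t\tilde w)$, their Sasaki distance tends to zero as well. Projecting to $T^1M$ and applying uniform continuity of $\varphi^u$ on the compact space $T^1M$ gives $\varphi^u(g^t w) - \varphi^u(g^t w') \to 0$, as desired.

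Now suppose $\Delta > 0$. I claim that every forward accumulation point of $\{g^t w:t\ge 0\}$ lies in $\cH$, and similarly for $w'$. Given any sequence $t_n\to\infty$, pick deck transformations $\phi_n$ moving the basepoint of $g^{t_n}\tilde w$ into a fixed fundamental domain, and pass to a subsequence so that $\phi_n(g^{t_n}\tilde w)\to\tilde u$ and $\phi_n(g^{t_n}\tilde w')\to\tilde u'$ in $T^1\widetilde M$, with projections $u,u'\in T^1M$. Because the $\phi_n$ are isometries and the distance function is monotone with limit $\Delta$, the limit vectors satisfy $d(\gamma_{\tilde u}(s),\gamma_{\tilde u'}(s))=\Delta$ for all $s\in\bR$. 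The flat strip theorem, exactly as in the proof of Lemma~\ref{flatstriplemma}, forces $\gamma_{\tilde u}$ to lie on the boundary of a flat strip, so the curvature vanishes identically along $\gamma_u$ and $u\in\cH$. Since $\varphi^u$ is continuous and vanishes on the closed set $\cH$, a compactness argument yields $\varphi^u(g^t w)\to 0$, and likewise $\varphi^u(g^t w')\to 0$. In particular their difference tends to $0$, and Cesàro then shows that both forward Lyapunov exponents exist and equal $0$.

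The one delicate point is the claim in the case $\Delta>0$ that \emph{every} subsequential limit of the orbit lies in $\cH$, rather than merely one particular limit as is produced in the proof of Lemma~\ref{flatstriplemma}. This follows from the same flat-strip argument applied along an arbitrary sequence $t_n\to\infty$, together with the fact that the monotone distance function has already reached its limit at infinity. Everything else in the proof is soft, relying only on continuity of $\varphi^u$, its vanishing on $\cH$, and continuous-time Cesàro averaging.
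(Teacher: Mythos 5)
Your proof is correct and follows essentially the same route as the paper: split according to whether $d(g^t w, g^t w')\to 0$, use continuity of $\varphi^u$ together with Ces\`aro averaging in the first case, and the flat strip theorem to force both forward exponents to vanish in the second. Your extra care in running the flat-strip argument along an arbitrary sequence $t_n\to\infty$ (so that \emph{every} forward accumulation point lies in $\cH$) is exactly the content of the statement of Lemma~\ref{flatstriplemma} that the paper invokes directly, so the two arguments coincide in substance.
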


\begin{proof}
	The forward Lyapunov exponent is the Birkhoff average of the  {\em continuous} function~$-\varphi^u$, so the claim is obvious if $d(g^t(w),g^t(w')) \to 0$ as $t \to \infty$. Otherwise it follows from Lemma~\ref{flatstriplemma} that both $w$ and $w'$ have forward exponent $0$.
\end{proof}

\begin{corollary}\label{p:minimal}
	If $v \in \cH$, then all vectors $w\in W^{0s}(v)$ have forward exponent $\chi_+(w)=0$ and all vectors $w\in W^{0u}(v)$ have backward exponent $\chi_-(w)=0$.
	If $v\in\cL(\alpha)$, then all vectors $w\in W^{0s}(v)$ have forward exponent $\chi_+(w)=\alpha$ and all vectors $w\in W^{0u}(v)$ have backward exponent $\chi_-(w)=\alpha$.
\end{corollary}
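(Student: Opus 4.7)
The plan is to leverage Proposition~\ref{cor:exp}, which handles leaves of the strong stable foliation $W^s$, and bridge to the weak foliation $W^{0s} = \bigcup_{t\in\bR} g^t W^s$ by flow-invariance of the forward Birkhoff average. Concretely, any $w \in W^{0s}(v)$ can be written as $w = g^s(w')$ for some $w' \in W^s(v)$ and some $s \in \bR$. Proposition~\ref{cor:exp} gives $\chi_+(w') = \chi_+(v)$ (with the usual understanding that one side exists iff the other does, and then they agree). The cocycle identity $dg^{T}|_{F^u_w} = dg^{T+s}|_{F^u_{w'}} \circ (dg^s|_{F^u_{w'}})^{-1}$ shows that $\log\lVert dg^T|_{F^u_w}\rVert$ and $\log\lVert dg^{T+s}|_{F^u_{w'}}\rVert$ differ by a term that is bounded in $T$, hence dividing by $T$ and letting $T\to\infty$ yields $\chi_+(w)=\chi_+(w')$. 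Equivalently, this is just the standard fact that the Birkhoff average of the continuous function $-\varphi^u$ along the flow is constant on orbits.

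For the first assertion, if $v\in\cH$ then, since $\gamma_{g^t v}=\gamma_v$ for every $t$, the entire orbit lies in $\cH$, and $\varphi^u$ vanishes identically on $\cH$ (as noted just before Lemma~\ref{nonneglemma}). Thus $\chi_+(v)=0$ is defined, and the bridge above gives $\chi_+(w)=0$ for every $w\in W^{0s}(v)$. For the second assertion, $v\in\cL(\alpha)$ means by definition that $\chi_+(v)=\alpha$, and the same transport yields $\chi_+(w)=\alpha$ for every $w\in W^{0s}(v)$.

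The backward claims follow by symmetry: applying the identical argument to the time-reversed flow interchanges $W^{0s}$ with $W^{0u}$ and $\chi_+$ with $\chi_-$, and the proof of Proposition~\ref{cor:exp} transcribes verbatim since both the convexity of $\lVert J\rVert$ under nonpositive curvature and the flat-strip alternative of Lemma~\ref{flatstriplemma} are symmetric in the time direction. I do not expect a real obstacle; the only substantive observation is that Proposition~\ref{cor:exp} only addresses the strong leaf, so flow-invariance of the Birkhoff limit is precisely the ingredient needed to upgrade it to the weak leaf.
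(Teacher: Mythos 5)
Your proof is correct and follows essentially the same route as the paper: the statement for $v\in\cH$ (resp.\ $v\in\cL(\alpha)$) is reduced to Proposition~\ref{cor:exp} and its analogue for $W^u$, using that $\varphi^u$ vanishes on the invariant set $\cH$, respectively the definition of $\cL(\alpha)$. The only difference is that you make explicit the flow-invariance of the Birkhoff average (the cocycle/bounded-difference step) needed to pass from the strong leaf $W^s$ to the weak leaf $W^{0s}$, a point the paper leaves implicit.
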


\begin{proof}
%%G:	The forward and backward exponents are obviously both $0$ for any $w \in \cH$, so the result follows from Corollary~\ref{cor:exp} and its analogue for $W^{0u}$.
The forward and backward exponents are obviously both $0$ for any $v \in \cH$, so the result follows from Proposition~\ref{cor:exp} and its analogue for $W^u$. The case $v\in\cL(\alpha)$ is analogous.
\end{proof}

\begin{lemma}\label{lem:exp}
Let $\gamma$ be a closed geodesic with period $\tau$ and intial tangent vector $v$. Then
 $$
 \chi(v) \leq \sqrt{- \frac1\tau \int_0^\tau K(\gamma(t))\,dt}.
 $$
\end{lemma}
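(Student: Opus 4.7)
The plan is to exploit the Riccati equation together with the fact that, along a periodic geodesic, the Riccati solution corresponding to the unstable direction is itself periodic, and then apply Cauchy–Schwarz.

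First, I would set up the Riccati solution along $\gamma$. Pick a nonzero $\xi \in F^u_v$ and let $J = J_\xi$ be the corresponding unstable Jacobi field along $\gamma$. Write $J(t) = j(t)E(t)$ for a parallel unit field $E$ orthogonal to $\dot\gamma$, and set $u(t) = j'(t)/j(t) = \lVert J'(t)\rVert/\lVert J(t)\rVert$, which is well defined and nonnegative because $J$ is unstable (so $jj' \ge 0$) and nowhere vanishing. By \eqref{ej2}, $u$ satisfies the Riccati equation
\[
 u'(t) + u(t)^2 + K(\gamma(t)) = 0.
\]
Since $F^u$ is a one-dimensional $dg^t$-invariant bundle and $g^\tau(v)=v$, the map $dg^\tau$ preserves the line $F^u_v$ and so acts on it by a positive scalar $\lambda$. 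This forces $(J(\tau),J'(\tau)) = \lambda(J(0),J'(0))$, whence $u(\tau)=u(0)$. Because $u$ solves an ODE whose coefficient $K(\gamma(t))$ is $\tau$-periodic, this in turn implies $u(t+\tau) = u(t)$ for all $t$.

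Next, by the lemma just preceding the statement (applied along the periodic orbit, where the Birkhoff average of the continuous function $u$ exists by periodicity, so $v$ is Lyapunov regular in the relevant sense), we have
\[
 \chi(v) = \lim_{T\to\infty}\frac{1}{T}\int_0^T u(t)\,dt = \frac{1}{\tau}\int_0^\tau u(t)\,dt.
\]
Integrating the Riccati equation over one period and using $u(\tau)=u(0)$ gives
\[
 \int_0^\tau u(t)^2\,dt \;=\; -\int_0^\tau K(\gamma(t))\,dt - \bigl(u(\tau)-u(0)\bigr) \;=\; -\int_0^\tau K(\gamma(t))\,dt.
\]
Finally, Cauchy–Schwarz yields
\[
 \left(\int_0^\tau u(t)\,dt\right)^2 \le \tau\int_0^\tau u(t)^2\,dt = -\tau\int_0^\tau K(\gamma(t))\,dt,
\]
and dividing by $\tau^2$ and taking square roots gives the claimed bound on $\chi(v)$.

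The only mildly delicate point is the step asserting periodicity of $u$: it relies crucially on the hypothesis that $F^u$ is one-dimensional, so that $dg^\tau|_{F^u_v}$ is necessarily scalar; without this, $(J(\tau),J'(\tau))$ might differ from $\lambda(J(0),J'(0))$ by a genuine linear map. Everything else is a routine consequence of the Riccati identity and Cauchy–Schwarz.
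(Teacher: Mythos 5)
Your proposal is correct and follows essentially the same route as the paper: identify $\chi(v)$ with the average $\frac1\tau\int_0^\tau u(t)\,dt$ of the Riccati solution $u=j'/j$ coming from an unstable Jacobi field, use one-dimensionality and invariance of $F^u$ along the closed orbit to get periodicity of $u$, integrate the Riccati equation over a period, and apply Cauchy--Schwarz. The only cosmetic difference is that the paper obtains $\chi(v)=\frac1\tau\log\bigl(j(\tau)/j(0)\bigr)$ directly rather than citing the preceding lemma, which amounts to the same computation.
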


\begin{proof} Consider a nonzero $\xi \in F^u_v$ and the corresponding Jacobi field $J_\xi$.
Let $j(t) = \|J_\xi(t)\|$. The bundle $F^u$ is one dimensional and invariant under the derivative of the geodesic flow. Since $\gamma$ has period $\tau$, we see using this fact  that $(J_\xi(\tau),J_\xi'(\tau))$ is a multiple of $(J_\xi(0),J_\xi'(0))$.
It follows easily that
 $$
 \chi(v) = \frac1\tau \log \left(\frac{j(\tau)}{j(0)}\right).
 $$
 The function $u(t) = j'(t)/j(t)$ is $\tau$-periodic and  by \eqref{er} satisfies the Riccati equation
 $$
 u'(t) + u(t)^2 + K(\gamma(t)) = 0.
 $$
Using the Schwarz inequality and integrating this equation gives
 $$
 \frac1\tau \log \left(\frac{j(\tau)}{j(0)}\right) = \frac1\tau\int_0^{\tau} u(t)\,dt
 \leq \sqrt{\frac1\tau\int_0^{\tau} u(t)^2\,dt} = \sqrt{-\frac1\tau\int_0^{\tau} K(\gamma(t))\,dt}.
 $$
 This proves the lemma.
\end{proof}

We use the previous lemma and the shadowing lemma from the previous subsection to investigate Lyapunov exponents.

\begin{proposition}\label{lem:exist-1}
	$\cH\ne\emptyset$ if and only if there exist closed geodesics with arbitrarily small positive Lyapunov exponent.
\end{proposition}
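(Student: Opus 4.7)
The $(\Leftarrow)$ direction is immediate: if $\cH=\emptyset$, then as noted in the introduction $\underline\chi>0$, and every closed geodesic carries a Lyapunov-regular periodic measure with exponent at least $\underline\chi$, precluding arbitrarily small positive values.

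For the $(\Rightarrow)$ direction, I would combine the exponent bound of Lemma~\ref{lem:exp} with the Shadowing Lemma (Lemma~\ref{lem:dosha}) to build closed geodesics $\gamma_n\subset\cR$ with $\chi(\gamma_n)\to 0$. First, I would fix $\kappa>0$ so that $\{p\in M:K(p)\le-\kappa^2\}$ has nonempty interior (possible by Gauss--Bonnet since $M$ has negative Euler characteristic), and then choose a recurrent $v_0\in\cH$ via Poincar\'e recurrence applied to any invariant probability measure carried by the compact invariant set $\cH$, so that $g^{T_n}v_0\to v_0$ for some $T_n\to\infty$.

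Next, I would construct, via Lemma~\ref{lem:dosha}, a closed geodesic $\gamma_n$ of period $\tau_n=T_n+O(1)$ that $\varepsilon_n$-shadows the segment $\gamma_{v_0}\big|_{[0,T_n]}$ with $\varepsilon_n\to 0$, and whose closing is a bounded-length detour through a point where $K\le-\kappa^2$. The anchor vectors required by the Shadowing Lemma are placed on the detour; topological transitivity of the geodesic flow on $T^1M$, which follows from the minimality of the strong stable foliation $W^s$ noted after Lemma~\ref{flatstriplemma}, provides the connectors between the detour and the orbit of $v_0$ with transit times bounded independently of $n$. Since $K\equiv 0$ along $\gamma_{v_0}$ and $K$ is continuous, the shadowed portion contributes $o(\tau_n)$ to $\int_0^{\tau_n}|K(\gamma_n(s))|\,ds$, while the $O(1)$-length detour contributes $O(1)$; Lemma~\ref{lem:exp} then yields $\chi(\gamma_n)=O(T_n^{-1/2})\to 0$. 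Because the detour passes through a point of $\{K\le-\kappa^2\}$, $\gamma_n$ enters $\cR$, so $\chi(\gamma_n)>0$.

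The hard part will be the construction of the closing portion of the pseudo-orbit: since $\gamma_{v_0}$ lies in $\{K=0\}$ while every anchor of Lemma~\ref{lem:dosha} must have footpoint in $\{K\le-\kappa^2\}$, one cannot anchor on $\gamma_{v_0}$ itself and must interleave anchors near $\{K\le-\kappa^2\}$ with the long shadowed stretch near $\gamma_{v_0}$. Verifying that the transitivity/minimality input genuinely supplies connecting orbits of bounded length (uniformly in $n$) between the negative-curvature region and the orbit of $v_0$, and that the resulting pseudo-orbit closes up, is the delicate step on which the $O(1)$-versus-$T_n$ scaling that drives the estimate relies.
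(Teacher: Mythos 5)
Your converse direction is fine and matches the paper, and your overall strategy for the forward direction --- close up a long stretch of nearly vanishing curvature by a bounded detour through a region of definitely negative curvature, then apply Lemma~\ref{lem:exp} --- is the paper's strategy too. But the step you yourself flag as delicate is a genuine gap, not a technicality. In Lemma~\ref{lem:dosha} the piece of the pseudo-orbit between two consecutive anchors must be a \emph{single genuine orbit segment} that starts at an anchor vector (footpoint $\delta$-close to a point with $K\le-\kappa^2$) and ends $\delta$-close to the next anchor. Since no anchor can sit on $\gamma_{v_0}$, your long tracking stretch together with both connectors must be one orbit $\gamma_{u_n}|_{[0,S_n]}$ with $u_n$ based near the negative-curvature region. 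To make such an orbit track $\gamma_{v_0}$ for time $T_n$ it must first enter a neighborhood of $v_0$ whose size shrinks as $T_n\to\infty$; topological transitivity gives \emph{some} finite time to reach that neighborhood and \emph{some} finite time to return to the anchor afterwards, but nothing bounds these times uniformly in $n$, nor even as $o(T_n)$ --- the recurrence times $T_n$ of $v_0$ bear no relation to how long a transitive orbit needs to hit a prescribed, shrinking target. Without connector times of order $o(T_n)$ the mean curvature along the closed-up geodesic need not tend to $0$, and Lemma~\ref{lem:exp} yields nothing. (Your claimed rate $O(T_n^{-1/2})$ is stronger still: it needs the tracking error to satisfy $\omega(\varepsilon_n)=O(1/T_n)$, with $\omega$ a modulus of continuity of $K$, which forces an even smaller entry neighborhood and worsens the transit-time problem.)

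The paper sidesteps this entirely by \emph{not} tracking the flat orbit. For fixed small $a>0$ it takes geodesics $\gamma_k$ with $\dot\gamma_k(0)\to v\in\cH$ whose orbits do reach $\{K<-a^2/2\}$, and lets $t_k^\pm$ be the first forward and backward times this happens. Then $t_k^\pm\to\pm\infty$, the segment $\gamma_k|_{[t_k^-,t_k^+]}$ satisfies $K\ge-a^2/2$ along it \emph{by the very definition of the exit times}, and its endpoint vectors converge to vectors $w_\pm$ footed at points with $K(p_\pm)=-a^2/2$, which serve as the anchors; so the long, small-curvature stretch is automatically an anchor-to-anchor genuine orbit segment, with no tracking of any specific flat orbit and no shrinking targets. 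The only connector is a single orbit segment from near $w_+$ to near $w_-$ of fixed length $T$, chosen once by transitivity, independently of $k$. Shadowing then produces closed geodesics whose mean curvature is bounded below by roughly $-a^2/2$ minus small errors, hence with exponent at most about $a/\sqrt2$ for large $k$; since $a$ was arbitrary, the exponents can be made arbitrarily small, and they are positive because the closed geodesics pass through negative curvature and are therefore regular. If you wish to keep the recurrence-based plan you must supply the missing uniform (or $o(T_n)$) control on the connectors; otherwise replace the tracking of $\gamma_{v_0}$ by the first-exit-time device.
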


\begin{proof}
	Assume that $\cH\ne\emptyset$. Consider $a>0$ small enough so that there are points of $M$ at which the curvature is less than $-a^2$. Choose a vector $v\in\cH$ and a sequence $\gamma_k$ of geodesics such that $\dot\gamma_k(0) \to v$ in $T^1M$. Since the curvature of $M$ is $0$ at all points of $\gamma_v$, we see that for all large enough $k$ the times $t_k^+=\inf\{t>0\colon K(\gamma_{k}(t))<-a^2/2\}$ and
%%G:$t_k^-=\inf\{t<0\colon K(\gamma_{k}(t))<-a^2/2\}$
	  $t_k^-=\sup\{t<0\colon K(\gamma_{k}(t))<-a^2/2\}$
are well-defined. Moreover $t_k^+\to\infty$ and $t_k^-\to-\infty$ as $k\to\infty$. By passing to a subsequence, we may assume that the sequences $\dot\gamma_{k}(t_k^-)$ and $\dot\gamma_{k}(t_k^+)$ converge to vectors $w_-\in T_{p_-}M$ and $w_+\in T_{p_+}M$, respectively. Note that
$K(p_\pm)=-a^2/2$ and $K(\gamma_k(t)) \ge -a^2/2$ for $t_k^-< t< t_k^+$.

Given $\varepsilon>0$, choose $\delta>0$ as in the conclusion of Lemma~\ref{lem:dosha}  applied to the points $p_-$ and $p_+$ and some fixed time $\tau>0$.
	Fix a vector $w\in\cR$ that satisfies $d(w,w_+)<\delta/2$ and for which there exists a time $T>\tau$ such that $d(\dot\gamma_w(T),w_-)<\delta/2$.
 By Lemma~\ref{lem:dosha},  there is for any large enough $k$ a closed geodesic $\beta_k$ with period $\tau_k$ close to
$t^+_k - t^-_k + T$ such that $d(\dot\beta_k(t),\dot\gamma_k(t))<\varepsilon$ for $t^-_k \le t \le t^+_k$ and
$d(\dot\beta_k(t),\dot\gamma_w(t)) <\varepsilon$ for $t^+_k \leq t \le  t^+_k + T$.  Our construction ensures that
 $$
 \frac1{\tau_k} \int_0^{\tau_k} K(\beta_k(t))\,dt \to 0\qquad\text{as $k \to \infty$.}
 $$
 It follows immediately from  Lemma~\ref{lem:exp} that $\chi(\dot\beta_k(0)) \to 0$ as $k \to \infty$.

Conversely, assume that $\cH$ is empty. Then $T^1M=\cR$ and $G|_{T^1M}$ is an Anosov flow.
In particular $\inf_{v\in T^1M}\underline\chi(v) > 0$.
\end{proof}

A similar but easier  argument using shadowing proves:

\begin{proposition} \label{chioverlineprop}
	There exist closed geodesics with Lyapunov exponent arbitrarily close to $\overline\chi$ and closed geodesics with Lyapunov exponent arbitrarily close to $\underline\chi$.
\end{proposition}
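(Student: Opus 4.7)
The plan is to adapt the shadowing construction of Proposition~\ref{lem:exist-1}, replacing the vector from $\cH$ by a Poincar\'e recurrent Lyapunov regular vector of near-extremal exponent. When $\cH\ne\emptyset$ we have $\underline\chi=0$, and the conclusion for $\underline\chi$ already follows from Proposition~\ref{lem:exist-1}; it therefore suffices to treat $\overline\chi$, the argument for $\underline\chi>0$ being identical.

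First I would produce an ergodic invariant measure with exponent close to $\overline\chi$. Given $\varepsilon>0$, pick a Lyapunov regular $v_\ast$ with $\chi(v_\ast)>\overline\chi-\varepsilon/2$ and take a weak-$\ast$ limit $\mu$ of the empirical measures $T_n^{-1}\int_0^{T_n}\delta_{g^tv_\ast}\,dt$ along a subsequence $T_n\to\infty$. Continuity of $\varphi^u$ gives $\chi(\mu)=\chi(v_\ast)$, and the ergodic decomposition produces an ergodic component $\hat\mu$ with $\chi(\hat\mu)\ge\chi(\mu)$. Pick $v$ that is simultaneously Lyapunov regular with $\chi(v)=\chi(\hat\mu)$ and Poincar\'e recurrent for $\hat\mu$; both properties hold $\hat\mu$-a.e. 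Since $\chi(v)>0$ we have $v\in\cR$, so the geodesic $\gamma_v$ meets a point $p_0=\gamma_v(t_0)$ with $K(p_0)<-\kappa^2$ for some $\kappa>0$.

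Next, apply Lemma~\ref{lem:dosha} with this $\kappa$ and a fixed $\tau>0$ to obtain $\delta>0$, and choose $T_k\to\infty$ with $d(g^{T_k}v,v)<\delta$. The periodic $\delta$-pseudo-orbit obtained by concatenating copies of the segment $\dot\gamma_v(t_0),\ldots,\dot\gamma_v(t_0+T_k)$, anchored at $p_0$ on every revolution, is shadowed $\varepsilon$-closely by a closed geodesic $\beta_k$ of period $\tau_k$ within $\delta$ of $T_k$. The exponent of $\beta_k$ is
\[
\chi(\dot\beta_k(0))=-\frac{1}{\tau_k}\int_0^{\tau_k}\varphi^u(\dot\beta_k(t))\,dt,
\]
and by uniform continuity of $\varphi^u$ on the compact space $T^1M$, this integral differs from $-T_k^{-1}\int_0^{T_k}\varphi^u(\dot\gamma_v(t_0+t))\,dt$ by a quantity tending to $0$ as $\varepsilon\to 0$ and $T_k\to\infty$. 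The latter average converges to $\chi(v)\ge\chi(\mu)>\overline\chi-\varepsilon/2$, so for $\varepsilon$ small and $k$ large we obtain a closed geodesic of Lyapunov exponent within $\varepsilon$ of $\overline\chi$.

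The main technical point is bounding the discrepancy between these two Birkhoff-type integrals: the closure segment contributes $O(1)$ in duration while $\tau_k\to\infty$, so its effect is $O(1/\tau_k)$; on the shadowing interval, the difference is controlled by the modulus of continuity of $\varphi^u$. Beyond this bookkeeping, the only ingredient not already present in Proposition~\ref{lem:exist-1} is the ergodic-decomposition step used to extract an ergodic measure with simultaneously near-extremal exponent and a recurrent typical point.
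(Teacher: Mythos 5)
Your argument is correct and is essentially the proof the paper has in mind: the paper dismisses this proposition with ``a similar but easier argument using shadowing,'' i.e.\ exactly your scheme of closing up, via Lemma~\ref{lem:dosha} anchored at a point of negative curvature, a long orbit segment of a vector with near-extremal exponent and comparing Birkhoff averages of the continuous potential $\varphi^u$. Your preliminary ergodic-decomposition/recurrence step is a clean way to produce the required returning segment (only note that recurrence should be arranged at $g^{t_0}v$, the vector footed at $p_0$, which is a trivial adjustment), so there is no gap.
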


These results allow us to show the extreme exponents $\underline\chi$ and $\overline\chi$ are realized by ergodic measures.

\begin{corollary}\label{expmeas}
There are ergodic measures $\underline\mu$ and $\overline\mu$ such that $\underline\chi = \chi(\underline\mu)$ and $\overline\chi = \chi(\overline\mu)$.
\end{corollary}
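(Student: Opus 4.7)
The plan is to combine Proposition~\ref{chioverlineprop} with weak$^*$ compactness and the ergodic decomposition.

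First, by Proposition~\ref{chioverlineprop} choose a sequence of closed geodesics $\gamma_k$ of period $\tau_k$ whose Lyapunov exponents satisfy $\chi(\dot\gamma_k(0))\to\overline\chi$. Each $\gamma_k$ carries a natural periodic-orbit measure $\mu_k\in\cM_{\rm e}$ (normalized arc length along the orbit in $T^1M$), and for this measure $\chi(\mu_k)=\chi(\dot\gamma_k(0))$. Since $\cM$ is weak$^*$ compact, after passing to a subsequence $\mu_k\to\overline\mu_0\in\cM$. Because $\varphi^u$ is continuous (Lemma on continuity of $F^u$), the functional $\mu\mapsto\chi(\mu)=-\int\varphi^u\,d\mu$ is weak$^*$ continuous, so $\chi(\overline\mu_0)=\overline\chi$. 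The only remaining point is that $\overline\mu_0$ need not be ergodic.

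To extract an ergodic measure I would use the ergodic decomposition $\overline\mu_0=\int\nu\,d\tau(\nu)$ on $\cM_{\rm e}$. For any $\nu\in\cM_{\rm e}$, the Birkhoff theorem applied to the continuous function $-\varphi^u$ shows that $\nu$-a.e.~$v$ is Lyapunov regular with $\chi(v)=\chi(\nu)$; hence by the definition of $\overline\chi$ we have $\chi(\nu)\le\overline\chi$. Since $\chi$ is affine in $\mu$,
\[
\overline\chi=\chi(\overline\mu_0)=\int\chi(\nu)\,d\tau(\nu)\le\overline\chi,
\]
which forces $\chi(\nu)=\overline\chi$ for $\tau$-a.e.~$\nu$. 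Any such $\nu$ can be taken as $\overline\mu$.

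The same argument works verbatim for $\underline\chi$, using closed geodesics with exponent approaching $\underline\chi$ (supplied by Proposition~\ref{chioverlineprop}, and in the case $\cH\ne\emptyset$ also by Proposition~\ref{lem:exist-1}) and the reverse inequality $\chi(\nu)\ge\underline\chi$ for every ergodic $\nu$, again from the definition of $\underline\chi$ together with the fact that $\nu$-a.e.~vector is Lyapunov regular. Thus the ergodic decomposition produces some component $\underline\mu$ with $\chi(\underline\mu)=\underline\chi$. There is no real obstacle here; the only subtle step is checking that the ergodic decomposition argument is legitimate, which rests on the continuity of $\varphi^u$ guaranteeing that $\chi$ integrates correctly against the decomposition.
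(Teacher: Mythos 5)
Your argument is correct and is essentially the paper's own proof: approximate the extreme exponent by periodic-orbit measures from Proposition~\ref{chioverlineprop}, pass to a weak$^*$ limit using the continuity of $\varphi^u$, and then extract an ergodic component via the ergodic decomposition together with the one-sided bound on $\chi(\nu)$ for ergodic $\nu$. The only difference is cosmetic (you treat $\overline\chi$ explicitly where the paper treats $\underline\chi$), so nothing further is needed.
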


\begin{proof}We give the proof for $\underline\chi$; the proof in the other case is analogous. By Proposition~\ref{chioverlineprop} there are measures $\mu_n$ supported on closed orbits such
that $\chi(\mu_n) \to \underline\chi$. Let $\mu$ be a weak limit of the $\mu_n$. Then $\chi(\mu) = \underline\chi$ since $\chi$ is the Birkhoff average of the continuous function $\varphi^u$. Since $\chi(\mu)$ is an average of the Lyapunov exponents for the measures in the ergodic decomposition of $\mu$ and the exponent of any invariant measure is at least $\underline\mu$,
we can choose $\underline\mu$ to be any measure in the ergodic decomposition of $\mu$.
\end{proof}

%-------------------------------------------------------------------------------------------------
\subsection{Entropy and pressure}\label{sec:press-1}
%-------------------------------------------------------------------------------------------------

Let $\varphi\colon T^1M\to\bR$ be a continuous function (called the potential). For every $Z\subset T^1M$ we denote by $P(\varphi,Z)$ the \emph{topological pressure} of the potential $\varphi$ on the set $Z$ with respect to the flow $G$ and by $h(Z)$ the \emph{topological entropy} of $G$ on $Z$. The entropy is the pressure for the potential $\varphi = 0$. For simplicity we write $P(\varphi)= P(\varphi,T^1M)$ and $h=h(T^1M)$.

Bowen and Ruelle \cite{BowRue:75} gave a definition $P(\varphi,Z)$ in the special case when $Z$ is compact and invariant, and observed that their definition is equivalent to defining $P(\varphi,Z)$ as the topological pressure on $Z$ of the function $\varphi^1: v \mapsto \int_0^1\varphi(g^t(v))\,dt$ with respect to the time-$1$ map $g^1$ of the flow $G$. We use this latter definition in the general case.

This requires a definition of the  topological pressure on a set $Z$ of a potential  with respect to a continuous mapping that makes sense even if the set $Z$ is not required to be either compact or invariant. A suitable definition
 was given  by Pesin and Pitskel~\cite{PesPit:84}; \cite[Chapter 4]{Pes:97} is a convenient reference.
Their definition generalizes the definition of topological entropy on a general set for a continuous mapping given  by Bowen in~\cite{Bow:73b} and the classical notion of pressure on a compact invariant set, which can be found in~\cite{Wal:81}. In this paper we need only to consider pressure on compact invariant sets and entropy on the sets $\cL(\alpha)$, which are invariant but in general noncompact.

The properties of pressure and entropy that are familiar in the classical setting of a compact invariant set (see e.g.~\cite{Wal:81}) still apply. In particular $P(\varphi,Z_1)\le P(\varphi,Z_2)$ if $Z_1\subset Z_2$. One can also verify Abramov's formula \cite{Abr:59}%,Ito:69, Ito:71,}:
 $$
 \text{$\lvert t\rvert \,h(Z)=h(g^t,Z)$ for every $t \in \bR$,}
 $$
 in which $h(g^t,Z)$ denotes the entropy on $Z$ of the time-$t$ map $g^t$.
See \cite{Ito:69} for a proof in the case when $Z$ is compact and invariant.
% and \cite{Abr:59,Ito:71} for the case of metric entropy.

As mentioned in the introduction, for a measure $\mu \in \cM$ and a potential $\varphi$, we define
 $$
 P(\varphi,\mu) \eqdef h(\mu) + \int \varphi \,d\mu,
 $$
 where $h(\mu)$ is the entropy of the time-$1$ map $g^1$ with respect to the measure $\mu$. This definition makes sense as long as $\mu$ is invariant under $g^1$, even if $\mu$ is not invariant under $g^t$ for all $t \in \bR$.

Given a set $Z\subset T^1M$, let $\cM(Z) \subset \cM$ be the set of all invariant measures $\mu$ for which $\mu(Z)=1$. We denote by $\cM_{\rm e}(Z)$ the subset of all ergodic measures in $\cM(Z)$ and by $\cM_{\rm e}^1(Z)$ the subset of measures that are ergodic for the time-$1$ map $g^1$.
Pesin and Pitskel~\cite[Theorem 1]{PesPit:84}  proved the following variational inequality for any measurable subset $Z$:
\begin{equation}\label{e.pesinpitskel}
	\sup_{\mu\in\cM(Z)}P(\varphi,\mu)
	\le P(\varphi,Z).
\end{equation}
They also gave a sufficient condition on $Z$ for this inequality to be equality for any $\varphi$. Their condition does not apply to the sets $\cL(\alpha)$, which are our primary interest, but it does apply if $Z$ is compact and invariant. In that case we have the variational principle
\begin{equation}\label{varprinc}
	P(\varphi,Z)
	= \sup_{\nu\in \cM_{\rm e}^1(Z)} P(\varphi^1,\nu)
		= \sup_{\mu\in \cM_{\rm e}(Z)} P(\varphi,\mu).
\end{equation}
The first equality is  the classical variational principle for maps; for the second see~\cite[Lemma 2]{Ohn:80}.

 A measure $\mu \in \cM_{\rm e}(Z)$ is said to be an \emph{equilibrium state} for $\varphi$ on the compact invariant set  $Z$ if $\mu$ realizes the supremum in~\eqref{varprinc}. Notice that such a measure always exists because the entropy map $\nu\mapsto h(g^1,\nu)$ is upper semi-continuous~\cite{Wal:81} since the flow is smooth~\cite{New:89}. Moreover, the  map $g^1$ is $h$-expansive~\cite[Proposition 3.3]{Kni:98} which also implies upper semi-continuity.
 %%G: \marginpar{\tiny Refer forward to the discussion of basic sets.}
 In Section~\ref{s:6} we will provide further, equivalent, characterizations of the pressure and, in particular, will study the topological pressure on basic sets.

%--------------------------------------------------------------------------------------------------
\section{Pressure spectrum for $\varphi^u$}\label{s:3}
%--------------------------------------------------------------------------------------------------

%--------------------------------------------------------------------------------------------------
\subsection{Pressure function in the general case}%--------------------------------------------------------------------------------------------------

In the following we analyze the pressure of parameterized family of
continuous potentials $q \varphi^u\colon  T^1M\to\bR$ . Let
\[
\cP(q) \eqdef P(q \varphi^u).
\]
We have $\cP(0)=h$, which is the topological entropy of the geodesic
flow on the unit tangent bundle. 
%%G: The function $\cP$ is nonincreasing because $\varphi^u \geq 0$. 
For each invariant measure $\mu \in
\cM$, we can consider the linear function $\cP_\mu: q \mapsto
P(q\varphi^u,\mu)$. By the variational principle for the topological
pressure~\eqref{varprinc},  we have $\cP \geq \cP_\mu$ for any
$\mu  \in \cM$ and we obtain the function $\cP$ by taking the
supremum of the functions $\cP_\mu$ over the ergodic measures:
 $$
 \cP = \sup \{ \cP_\mu : \mu \in \cM_{\rm e}\}.
 $$
The ergodic measures which play an active role in this supremum are called {\em equilibrium states}. More precisely $ \mu \in \cM_{\rm e}$ is an equilibrium state for $q\varphi^u$ if $\cP_\mu(q) = \cP(q) = P(q\varphi^u)$. It is also allowed for the graph of $\cP_\mu$ to be asymptotic to the graph of $\cP$; in this case we can think that $\cP_\mu(q) = \cP(q)$  when $q = \pm \infty$.

The next proposition summarizes some general consequences of the variational principle.

 \begin{proposition}\label{convprop}
	 The function $\cP$ is nonincreasing, convex. Moreover:
 	\begin{enumerate}

	\item \label{P1} $\cP$ is differentiable for
all but at most countably many $q$, and the left and right derivatives $D_L\cP(q)$ and
$D_R\cP(q)$ are defined for all $q$.

	\item \label{P2} We have
\[	\underline\chi
	=-\lim_{q\to\infty}\frac{\cP(q)}{q}
	=-\lim_{q\to\infty}D_L\cP(q)
	\,,\quad
	\overline\chi
	= -\lim_{q\to-\infty}\frac{\cP(q)}{q}
%%G: 	=-\lim_{q\to\infty}D_R(q)\,.
	=-\lim_{q\to-\infty}D_R\cP(q)\,.
\]

	\item  \label{P3} The graph of $\cP$ has a supporting line of slope
$-\chi(\mu)$ for every $\mu  \in \cM(\Lambda)$ Thus there is a
supporting line of slope $-\alpha$ for all $\alpha \in
[\underline\chi,\overline\chi]$.

	\item  \label{P4} If $\mu$ is an equilibrium state for $q\varphi^u$ for
some $q$, then the graph of $\cP_\mu$ is a supporting line for the graph
of $\cP$ at $(q,\cP(q))$.

	\item  \label{P5} If $\mu$ is an equilibrium state for $q\varphi^u$, then $-D_L\cP(q) \geq \chi(\mu) \geq -D_R\cP(q)$.

	\item  \label{P6} $\cP$ is differentiable at $q$ if and only if all
equilibrium states for $q\varphi^u$ have the same exponent and this
exponent is $-\cP'(q)$. In particular $\cP$ is differentiable at $q$ if
there is a unique  equilibrium state for $q\varphi^u$.

	\item  \label{P7} If $\mu$ is not ergodic and $\cP_\mu(q) = \cP(q)$ for
some $q$, then all of the measures in the ergodic decomposition of $\mu$
are equilibrium states for  $q\varphi^u$.

	\item  \label{P8} For any  $q$ there are
equilibrium states $\mu_{L,q}$ and $\mu_{R,q}$ for $q\varphi^u$
%%G: with exponent $D_L(q)$ and with exponent $D_R(q)$.
	such that  $\chi(\mu_{L,q}) = -D_L\cP(q)$ and $\chi(\mu_{R,q}) = -D_R\cP(q)$.
	
	\item \label{P9} For any $\alpha \in (\underline\chi,\overline\chi)$ there is a measure $\mu_\alpha$ 
	such that $\chi(\mu_\alpha) = \alpha$ and $q \mapsto h(\mu_\alpha) - q\chi(\alpha)$ 
	is a supporting line for $\cP$.

\end{enumerate}
\end{proposition}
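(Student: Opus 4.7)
The plan rests on the single identity (from the variational principle \eqref{varprinc})
\[
\cP(q) = \sup_{\mu\in\cM}\cP_\mu(q),\qquad \cP_\mu(q) \eqdef h(\mu) - q\chi(\mu),
\]
which displays $\cP$ as the upper envelope of a family of affine functions; convexity is then automatic, and monotonicity follows because $\chi(\mu)\geq 0$ by Lemma~\ref{nonneglemma}. Item~\eqref{P1} is the standard fact that a finite-valued convex function on $\bR$ has one-sided derivatives everywhere and is differentiable off a countable set.

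For~\eqref{P2}, the upper bound $\cP(q)/q\leq h/q-\underline\chi$ (for $q>0$) is immediate from $\chi(\mu)\geq\underline\chi$, and the reverse bound follows by testing the supremum against the measure $\underline\mu$ of Corollary~\ref{expmeas}, giving $\cP(q)/q\geq h(\underline\mu)/q-\underline\chi$. Letting $q\to\infty$ yields the asymptote; the identification with $\lim D_L\cP(q)$ is standard for convex functions, and the case $q\to-\infty$ is symmetric via $\overline\mu$. For~\eqref{P3}, I set $\cE(\alpha)\eqdef\inf_{q\in\bR}(\cP(q)+q\alpha)$; a supporting line of slope $-\alpha$ exists iff $\cE(\alpha)>-\infty$. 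Testing the supremum in $\cP(q)$ against $\underline\mu$ gives $\cP(q)+q\underline\chi\geq h(\underline\mu)$ for all $q$, hence $\cE(\underline\chi)\geq h(\underline\mu)$; the endpoint $\overline\chi$ is symmetric; and for interior $\alpha\in(\underline\chi,\overline\chi)$ the coercivity $\cP(q)+q\alpha\to+\infty$ as $|q|\to\infty$ (from~\eqref{P2}) ensures $\cE(\alpha)$ is finite and even attained at some $q_0$.

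Items~\eqref{P4}--\eqref{P7} are then convex-analytic bookkeeping built on the variational formula. \eqref{P4}: $\cP_\mu\leq\cP$ everywhere, so equality at a single point forces the affine function $\cP_\mu$ to be a supporting line. \eqref{P5}: any supporting line at $q$ has slope in $[D_L\cP(q),D_R\cP(q)]$, and $\cP_\mu$ has slope $-\chi(\mu)$. \eqref{P6}: differentiability at $q$ is equivalent to uniqueness of the supporting slope there, which via \eqref{P4}--\eqref{P5} translates to all equilibrium states sharing the exponent $-\cP'(q)$. \eqref{P7}: for the ergodic decomposition $\mu=\int\mu_\xi\,d\tau(\xi)$, both $h$ and $\chi$ are affine in $\mu$, so $\cP_\mu(q)=\int\cP_{\mu_\xi}(q)\,d\tau(\xi)$; combining $\cP_{\mu_\xi}(q)\leq\cP(q)$ with $\int\cP_{\mu_\xi}(q)\,d\tau(\xi) = \cP(q)$ forces $\cP_{\mu_\xi}(q)=\cP(q)$ for $\tau$-a.e.\ $\xi$.

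The substantive work is in~\eqref{P8}. Choose $q_n\downarrow q$ at which $\cP$ is differentiable (possible by~\eqref{P1}). Upper semi-continuity of the entropy map (noted in the excerpt via $h$-expansivity~\cite{Kni:98} and smoothness~\cite{New:89}) together with weak-$*$ compactness of $\cM$ and continuity of $\mu\mapsto\chi(\mu)$ yield an equilibrium state $\mu_n$ for $q_n\varphi^u$; by~\eqref{P4}--\eqref{P5} at the differentiable point $q_n$, $\chi(\mu_n)=-\cP'(q_n)\to-D_R\cP(q)$, using right-continuity of the right derivative of a convex function. Extracting a weak-$*$ limit $\mu_{R,q}$, continuity of $\chi$ gives $\chi(\mu_{R,q})=-D_R\cP(q)$, while $h(\mu_n)-q\chi(\mu_n)=\cP(q_n)+(q_n-q)\chi(\mu_n)\to\cP(q)$ combined with upper semi-continuity of $h$ yields $h(\mu_{R,q})-q\chi(\mu_{R,q})\geq\cP(q)$, forcing equality and showing $\mu_{R,q}$ is an equilibrium state for $q\varphi^u$. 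The construction of $\mu_{L,q}$ is symmetric with $q_n\uparrow q$. Finally \eqref{P9} reduces to \eqref{P8}: pick $q_0$ attaining $\cE(\alpha)$ from~\eqref{P3}, so $\alpha\in[-D_R\cP(q_0),-D_L\cP(q_0)]$; interpolate $\mu_\alpha = t\mu_{L,q_0}+(1-t)\mu_{R,q_0}$ with $t\in[0,1]$ chosen so $\chi(\mu_\alpha)=\alpha$; by affinity of $\cP_{(\cdot)}$, $\mu_\alpha$ is still an equilibrium state for $q_0\varphi^u$, and \eqref{P4} supplies the supporting line of slope $-\alpha$. The single non-formal ingredient—needed only in~\eqref{P8}, and hence in~\eqref{P9}—is the upper semi-continuity of entropy; everything else is convex-analytic manipulation of the variational formula.
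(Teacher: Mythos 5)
Your proposal is correct and takes essentially the same route as the paper: the upper-envelope convexity bookkeeping for items (\ref{P1})--(\ref{P7}), a weak-$*$ limit of equilibrium states taken at nearby differentiability points (relying on upper semi-continuity of the entropy map) for item (\ref{P8}), and a convex combination of the two extreme equilibrium states at the touching point $q_\alpha$ for item (\ref{P9}). The only nuance is that the paper's equilibrium states are by definition ergodic, so in (\ref{P8}) one extra line (exactly the paper's) is needed: pass to the ergodic decomposition of your limit measure $\mu_{R,q}$ and use your items (\ref{P5}) and (\ref{P7}) to conclude that almost every ergodic component is an equilibrium state with exponent exactly $-D_R\cP(q)$.
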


 \begin{proof}The function $\cP$ is nonincreasing because $\varphi^u \leq 0$. Convexity of $\cP$ is an immediate consequence of the
variational principle and the linearity of the functions $\cP_\mu$.
 
(\ref{P1}) describes standard facts about convex functions. 
To prove (\ref{P2}), notice that $h(\mu) \ge 0$ and the variational principle imply	
	\[
	\frac 1 q\sup_{\mu\in \cM_{\rm e}} (-q\,\chi(\mu))
	\le \frac{\cP(q)}{q}
	\le \frac{h}{q} + \frac 1 q \sup_{\mu\in \cM_{\rm e}}(-q\, \chi(\mu))
	\]
	for any $q>0$ and so the first equality holds. The second one follows similarly.

(\ref{P3})
follows because the graph of $\cP_\mu$ is  either a supporting line for
the graph of $\cP$ or lies below the graph; in the latter case, there
will be a higher parallel line that is a supporting line for the graph of
$\cP$. (\ref{P4}) is immediate from the variational principle and the
definition of equilibrium states. (\ref{P5}) and (\ref{P6}) follow from
(\ref{P4}) and the fact that $\chi(\mu)$ is the derivative of the
function $\cP_\mu$.

To see (\ref{P7}), observe that if  $\mu$ is a weighted average of ergodic measures $\mu_a$ for $a \in A$, then $P(q\varphi^u,\mu)$ is a weighted average of the functions  $ P(q\varphi^u,\mu_a)$ with the same weighting  and $\cP_\mu(q)$ is a weighted average of the values $\cP_{\mu_a}(q)$, again with the same weighting. But we have $\cP_{\mu_a}(q) \leq \cP(q)$ since this is true for any invariant measure. The only way for the average to be equal to the supremum is to have
$\cP_{\mu_a}(q) = \cP(q)$
 for all $a$.

 (\ref{P8}) is immediate from (\ref{P6}) if $\cP$ is differentiable at $q$. If not, choose a sequence $q_n$ such that $q_n$ decreases to
$q$ and $\cP$ is differentiable at $q_n$ for each $n$. For each $n$ let
$\mu_n$ be the equilibrium state for $q_n\varphi^u$ and consider a weak
limit $\mu$ of the sequence $\mu_n$. Then $\cP_\mu(q) = \cP(q)$ and
$$
%%G: \int \phi^u \,d\mu =   D_R(q).
\int \varphi^u \,d\mu =  D_R\cP(q).
$$
%%G: All measures in the ergodic decomposition of $\mu$ are equilibrium states for $-q\varphi^u$ and have exponent at most $D_R(q)$. Since the exponents have these measures have an average equal to the upper bound $D_R(q)$, they are all equal to $D_R(q)$.
All measures in the ergodic decomposition of $\mu$ are equilibrium states
for $q\varphi^u$ and have exponent at least
%%G: most
 $-D_R\cP(q)$. Since the exponents of
%%G: have 
  these measures have an average equal to the lower bound $-D_R\cP(q)$, they are all equal to $-D_R\cP(q)$.
A similar argument can be made using a sequence approaching $q$
from the left.

Finally, to prove (\ref{P9}), observe using (\ref{P2}) that if $\alpha \in (\underline\chi,\overline\chi)$, then
the supporting line for $\cP$ with slope $-\alpha$ will touch the graph of $\cP$ at a point 
$(q_\alpha, \cP(q_\alpha))$. The desired measure $\mu_\alpha$ is the appropriate linear combination of the measures $\mu_{L,q_\alpha}$ and $\mu_{R,q_\alpha}$ provided by (\ref{P8}).

\end{proof}

\begin{lemma}\label{cornerlemma}
We have $\cP(1) = 0$.
If $\cH \neq \emptyset$, then $\cP(q) = 0$ for all $q \geq 1$,
and the supporting line for $\cP$ of slope $-\alpha$ passes through $(1,0)$ for 
$0 \leq \alpha \leq \alpha_1 \eqdef \chi(\widetilde m)$.
\end{lemma}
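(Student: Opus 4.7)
The plan is to dispatch the three assertions in sequence, using Ruelle's inequality and Pesin's formula as the only nontrivial inputs beyond what the paper has already established.

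\medskip

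\textbf{Step 1: $\cP(1)=0$.} I will prove the two matching inequalities. For $\cP(1)\le 0$, I will invoke Ruelle's inequality applied to the time-$1$ map $g^{1}$: for any ergodic $\mu\in\cM_{\rm e}$ the unique nonnegative Lyapunov exponent of $g^{1}$ (beyond the zero exponent in the flow direction) equals $\chi(\mu)\ge 0$ by Lemma~\ref{nonneglemma}, whence $h(\mu)\le\chi(\mu)$ and therefore $P(\varphi^{u},\mu)=h(\mu)-\chi(\mu)\le 0$; taking the supremum over $\mu$ and using the variational principle yields $\cP(1)\le 0$. For the matching lower bound, I will apply Pesin's entropy formula to $\widetilde m$, which is $g^{1}$-invariant and absolutely continuous with full mass on $\cR$ where the exponent $\chi$ is nonzero; this gives $h(\widetilde m)=\chi(\widetilde m)=\alpha_{1}$, so $\cP(1)\ge P(\varphi^{u},\widetilde m)=0$.

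\medskip

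\textbf{Step 2: the plateau $\cP(q)=0$ for $q\ge 1$ when $\cH\neq\emptyset$.} For the upper bound, I will simply extend the Ruelle argument of Step 1: for any $\mu\in\cM_{\rm e}$ and $q\ge 1$,
\[
P(q\varphi^{u},\mu)=h(\mu)-q\chi(\mu)\le (1-q)\chi(\mu)\le 0,
\]
since $\chi(\mu)\ge 0$, and the variational principle delivers $\cP(q)\le 0$. For the lower bound I will use that $\cH$ is nonempty, compact, and $G$-invariant, so by Krylov--Bogolyubov it carries some $\mu\in\cM(\cH)$; because $\varphi^{u}\equiv 0$ on $\cH$, one has $\chi(\mu)=0$, whence $P(q\varphi^{u},\mu)=h(\mu)\ge 0$ for every $q$, so $\cP(q)\ge 0$.

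\medskip

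\textbf{Step 3: pencil of supporting lines through $(1,0)$.} Consider the line $\ell_{\alpha}(q)\eqdef\alpha(1-q)$, which passes through $(1,0)$ with slope $-\alpha$. The variational principle applied to $\widetilde m$ gives
\[
\cP(q)\ge P(q\varphi^{u},\widetilde m)=h(\widetilde m)-q\,\chi(\widetilde m)=\alpha_{1}(1-q)=\ell_{\alpha_{1}}(q),
\]
so $\ell_{\alpha_{1}}$ is a supporting line at $(1,0)$. For $0\le\alpha\le\alpha_{1}$, I will show $\cP(q)\ge\ell_{\alpha}(q)$ by splitting at $q=1$: when $q\le 1$, $1-q\ge 0$ gives $\ell_{\alpha}(q)\le\ell_{\alpha_{1}}(q)\le\cP(q)$; when $q\ge 1$, $\ell_{\alpha}(q)\le 0=\cP(q)$ by the plateau from Step~2. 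Since $\ell_{\alpha}(1)=0=\cP(1)$, the line $\ell_{\alpha}$ is a supporting line for $\cP$.

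\medskip

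No step appears to be a serious obstacle: Ruelle's inequality and Pesin's formula are standard for the smooth flow $G$ and its absolutely continuous invariant measure $\widetilde m$, and the plateau argument plus the elementary splitting at $q=1$ handles the pencil of supporting lines. The only minor subtlety to verify carefully is that Pesin's formula indeed applies to $\widetilde m$ despite its support being the open (not full) set $\cR$; this is immediate since $\widetilde m$ is absolutely continuous with respect to Liouville and the $g^{1}$-ergodic components have nonzero Lyapunov exponent, so the standard Pesin formula goes through without modification.
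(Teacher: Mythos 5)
Your proof is correct and follows essentially the same route as the paper: Pesin's formula for $\widetilde m$ together with Ruelle's inequality gives $\cP(1)=0$, a measure supported on $\cH$ (with zero exponent) gives $\cP(q)\ge 0$, and the plateau plus the line $q\mapsto\alpha_1(1-q)$ coming from $\widetilde m$ yields the pencil of supporting lines through $(1,0)$. The only cosmetic differences are that you obtain $\cP(q)\le 0$ for $q\ge 1$ directly from Ruelle's inequality rather than from monotonicity of $\cP$ and $\cP(1)=0$, and you verify the supporting lines by explicit inequalities instead of the paper's one-sided derivative bounds $D_L\cP(1)\le-\alpha_1$, $D_R\cP(1)=0$.
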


\begin{proof} The measure $\widetilde m$, which is the restriction of the Liouville
measure to $\cR$ normalized to be a probability measure, is absolutely
continuous with respect to the Liouville measure, and thus
 $h(\widetilde m) = \chi(\widetilde m)$ by Pesin's formula. On the other
hand, the Ruelle inequality gives $h(\mu) \leq \chi(\mu)$ for any
invariant probability measure. It follows that $\widetilde m$ is an
equilibrium state for $\varphi^u$ and thus $\cP(1) = 0$.

 The set $\cH$ is closed and invariant. If it is nonempty, $\cH$ supports
at least one invariant measure. We have $\chi(\mu) = 0$ for any such
measure $\mu$, since the curvature is $0$ at the footpoint of any vector
in $\cH$. It follows from this, the Ruelle inequality and the variational
principle for entropy that  we also have $h(\mu) = 0$ for any measure
$\mu$ supported on $\cH$, and hence
 $\cP_\mu(q) = 0$ for all $q$. Thus if $\cH \neq \emptyset$, there is a
measure $\mu$ such that $\cP_\mu = 0$. It follows from the variational
principle that $\cP(q) \geq 0$ for all $q$ in this case.
 Furthermore, since $\cP(1) = 0$ and $\cP$ is nonincreasing, we must have
$\cP(q) = 0$ for all $q \geq 1$.

We see that $D_L\cP(1) \leq - \chi(\widetilde m)$ and $D_R\cP(1) = 0$ if $\cH \neq \emptyset$.
It follows immediately that if $0 \leq \alpha \leq \chi(\widetilde m)$, then the supporting line for $\cP$ of slope 
$-\alpha$ must pass through $(1,0)$.
\end{proof}

 Since $\alpha_1 > 0$ by 
Lemma~\ref{alpha1lem},  the graph of $\cP$ has a corner at $(1,0)$ if $\cH \neq \emptyset$. On the other hand,  Knieper showed that there is a unique measure of maximal entropy in our situation \cite{Kni:98}; it follows from this and part (\ref{P6}) of Proposition~\ref{convprop} that $\cP$ is differentiable at $0$.

%%G: new paragraph
We do not know if it is possible for there to be other corners in the graph when $\cH \neq \emptyset$. We also do not know if $\cP$ must be strictly convex on $(-\infty,1)$ or if $D_L\cP(1)=-\chi(\widetilde m)$. 

\begin{figure}
\begin{minipage}[c]{\linewidth}
\centering
a)\begin{overpic}[scale=.50%,grid,tics=10
  ]{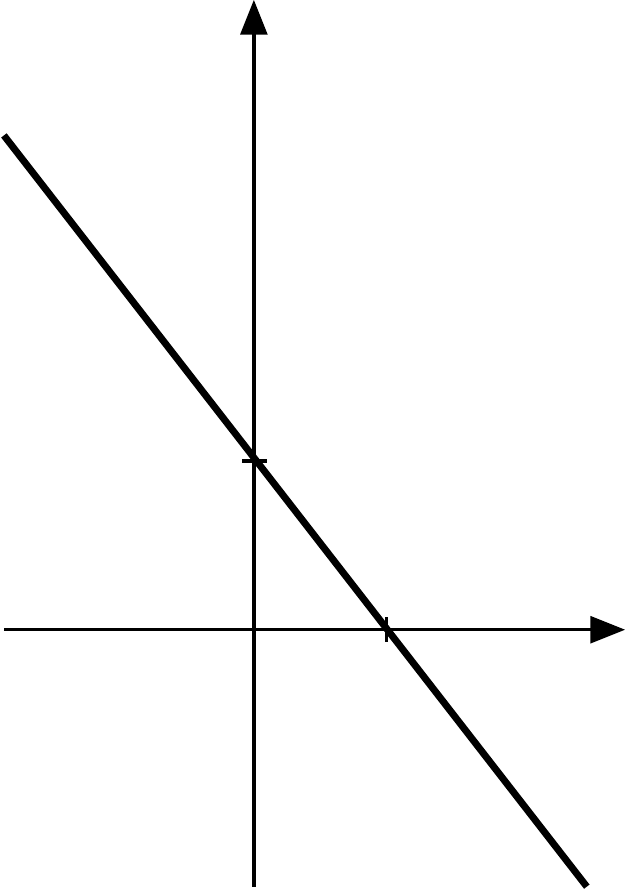}
      	\put(42,20){\tiny$1$}	
      	\put(72,27){\small$q$}	
      	\put(35,90){\small$\cP(q)$}	
	\put(-2,46){\tiny$h=k$}
\end{overpic}
\hspace{0.3cm}
b)\begin{overpic}[scale=.50%,grid,tics=10
  ]{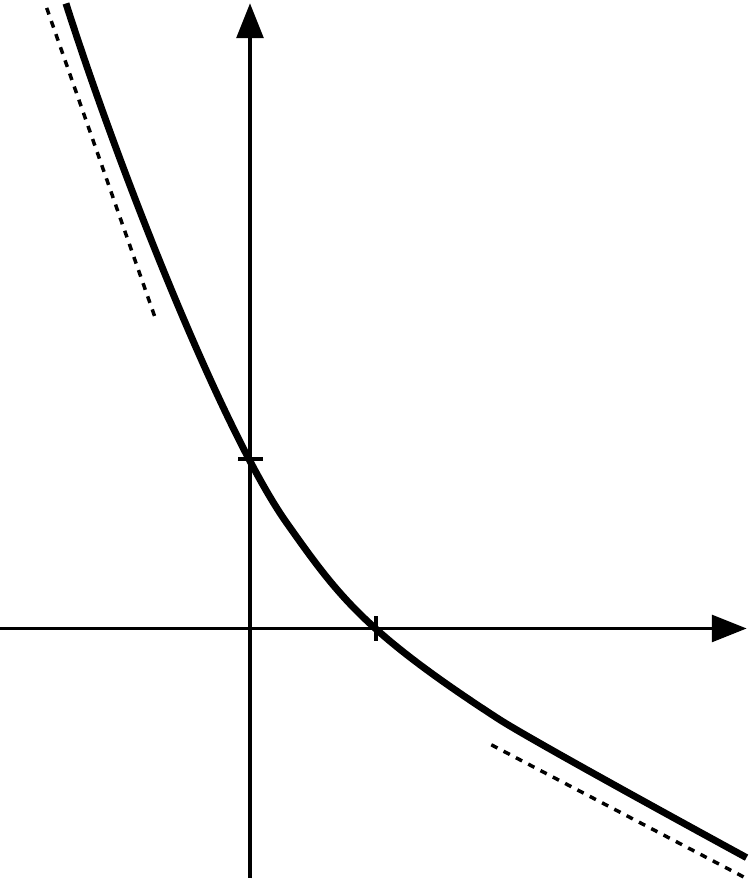}
      	\put(41.5,20){\tiny$1$}	
      	\put(87,27){\small$q$}	
      	\put(35,90){\small$\cP(q)$}	
	\put(10,46){\tiny$h$}
\end{overpic}
\hspace{0.3cm}
c)\begin{overpic}[scale=.50%,%grid,tics=10
  ]{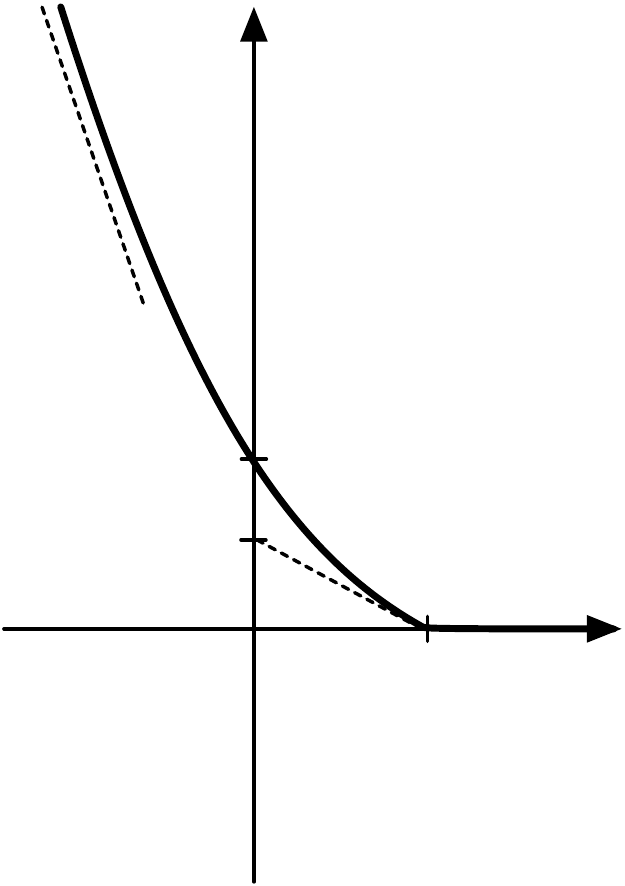}
  	\put(47,20){\tiny$1$}	
      	\put(73,27){\small$q$}		
      	\put(35,90){\small$\cP(q)$}
	\put(10,46){\tiny$h$}
	%\put(4,38){\tiny$h(\widetilde m)$}
\end{overpic}
\vspace{0.5cm}
\end{minipage}
\caption{The pressure function $q\mapsto \cP(q)=P(q\varphi^u)$ in the case of a)  constant negative curvature $K=-k^2$, b) negative curvature $-k_1^2\le
K\le -k_2^2$, c) non-positive curvature $-k^2\le K\le 0$}
\label{fig.map33}
\end{figure}

 Figure~\ref{fig.map33} shows the function $\cP$ in three cases. When the
curvature is everywhere $-k^2$, all invariant measures have exponent $k$
and $\cP$ is linear. When the curvature is negative and variable, $\cH =
\emptyset$ and the whole of $T^1M$ is a basic set. The function $\cP$ is
strictly convex and real analytic~\cite{BowRue:75}. If $-k_1^2\le K\le -k_2^2$, the
exponent of any invariant measure lies in the interval $[k_1,k_2]$ and
$\cP$ is decreasing.
 When $\cH \neq \emptyset$,   the graph has the corner at $(1,0)$ and
$\cP(q)$ is constant for $q \geq 1$.

%--------------------------------------------------------------------------------------------------
\subsection{Pressure function for basic sets}\label{sec:3.1basicsets}
%---------------------------------------------------------------------------------------------------

If $\Lambda$ is a basic set for $G$, we can make an analogous definition
of the functions
 $$
	 \cP_\Lambda(q) \eqdef P(q\varphi^u,\Lambda)
 	\quad\text{ and }\quad
	\cE_\Lambda(\alpha)\eqdef \inf_{q\in\bR}(\cP_\Lambda(q)+q\alpha).
 $$
 All of the properties listed in Proposition~\ref{convprop} apply (with suitable minor modifications) to $\cP_\Lambda$.
The function $\cP_\Lambda$ also enjoys additional properties that we collect below.
These results go back to Bowen and
Ruelle \cite{BowRue:75}. They are proved by studying the symbolic
dynamics provided by a Markov partition of $\Lambda$; the equilibrium
measures are Gibbs states. The rich supply of equilibrium measures on a
basic set will be crucial in our arguments.

\begin{proposition}\label{convpropforbasic}
	The function $\cP_\Lambda$ is strictly convex. Moreover:
 	\begin{enumerate}

	\item\label{PL1} $\cP_\Lambda$ is real analytic on $\bR$.

	\item\label{PL2} There is a unique supporting line of slope $-\alpha$ for all $\alpha \in [\underline\chi(\Lambda),\overline\chi(\Lambda)]$.

	\item\label{PL3} For every $q$ there is a unique equilibrium state $\mu_q$ for $q\varphi^u$ %\cite[Proposition 2.2]{BowRue:75}
and it satisfies
	\[
		\cE_\Lambda(\chi(\mu_q))   = h(\mu_q)
	\]

	\item\label{PL4} $\chi(\mu_q)= -{\cP_\Lambda}'(q)$.

%\item  \label{PL5} $\cP$ is differentiable at $q$ if and only if all equilibrium states for $-q\varphi^u$ have the same exponent and this exponent is $-\cP'(q)$. In particular $\cP$ is differentiable at $q$ if there is a unique  equilibrium state for $-q\varphi^u$.

%\item  \label{P6} If $\mu$ is not ergodic and $\cP_\mu(q) = \cP(q)$ for some $q$, then all of the measures in the ergodic decomposition of $\mu$ are equilibrium states for  $-q\varphi^u$.

%\item  \label{P7} If $\cP$ is not differentiable at $q$, then there are equilibrium states for $-q\varphi^u$ with exponent $D_L(q)$ and with exponent $D_R(q)$.

	\item[5.] \label{PL8} The domain of $\cE_\Lambda$ coincides with the range of $-{\cP_\Lambda}'$. We have
\[	\underline\chi(\Lambda)
	=-\lim_{q\to\infty}\frac{\cP_\Lambda(q)}{q}
	%=-\lim_{q\to\infty}D_L(q)
	\,,\quad
	\overline\chi(\Lambda)
	= -\lim_{q\to-\infty}\frac{\cP_\Lambda(q)}{q}\,.
	%=-\lim_{q\to\infty}D_R(q)\,.
\]
\end{enumerate}
\end{proposition}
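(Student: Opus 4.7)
The plan is to transport the thermodynamic formalism on $\Lambda$ to that of a subshift of finite type via a Markov partition, and then invoke the classical Ruelle--Bowen--Sinai theory for H\"older potentials. Since $\Lambda$ is a basic set for the $C^\infty$ flow $G$, the Bowen--Ratner construction yields a Markov partition of $\Lambda$ so that $G|_\Lambda$ is conjugate, up to the standard boundary ambiguities, to a suspension flow over an irreducible subshift of finite type $(\Sigma_A,\sigma)$ with a H\"older roof function. The strong unstable distribution $F^u$ is H\"older continuous on the uniformly hyperbolic set $\Lambda$, hence $\varphi^u$ is H\"older on $\Lambda$, and so is its pullback to $\Sigma_A$.

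I would then invoke Ruelle's Perron--Frobenius theorem: for each $q\in\bR$ the transfer operator associated to the H\"older potential arising from $q\varphi^u$ has a simple, isolated leading eigenvalue $e^{\cP_\Lambda(q)}$ depending real-analytically on $q$, which is item~(1). The corresponding left and right eigenvectors produce a unique Gibbs state which, reinterpreted on $\Lambda$ via the Bowen--Walters correspondence between suspension flows and their bases, is the unique equilibrium state $\mu_q$ for $q\varphi^u$; this gives existence and uniqueness in~(3). Analytic perturbation of the leading eigenvalue yields $\cP_\Lambda'(q) = \int\varphi^u\,d\mu_q = -\chi(\mu_q)$, which is~(4). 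The identity $\cE_\Lambda(\chi(\mu_q)) = h(\mu_q)$ in~(3) is then immediate from the Legendre definition of $\cE_\Lambda$, since $\mu_q$ being the equilibrium state gives $\cP_\Lambda(q) + q\chi(\mu_q) = h(\mu_q)$. Uniqueness of the supporting line in~(2) follows once strict convexity is proved.

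Strict convexity is where the main subtlety lies. By Ruelle's theory, $\cP_\Lambda''(q) > 0$ for all $q$ unless $\varphi^u$ is flow-cohomologous to a constant on $\Lambda$; by Liv\v{s}ic's theorem this would force every closed geodesic in $\Lambda$ to have a common Lyapunov exponent. To rule this out I would exhibit two closed orbits in $\Lambda$ with distinct exponents. Since $\Lambda \subset \cR$ is a transitive hyperbolic set with infinitely many periodic orbits, one can use the shadowing lemma (Lemma~\ref{lem:dosha}) to construct closed orbits shadowing different recurrent arcs, and then Lemma~\ref{lem:exp} together with the Riccati equation~\eqref{er} relates their exponents to the integrated curvature along those arcs; varying the shadowed arcs yields the required distinct exponents.

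Finally, the identification of the domain of $\cE_\Lambda$ with the range of $-\cP_\Lambda'$ and the asymptotic slope formulas in item~5 proceed exactly as in Proposition~\ref{convprop}(\ref{P2}): the variational principle restricted to $\Lambda$ gives for $q>0$ the sandwich $-\cP_\Lambda(q)/q \le \chi(\mu) \le -\cP_\Lambda(q)/q + h(\Lambda)/q$ for any equilibrium state $\mu$, forcing $-\cP_\Lambda(q)/q \to \underline\chi(\Lambda)$ as $q\to\infty$, and analogously $-\cP_\Lambda(q)/q \to \overline\chi(\Lambda)$ as $q\to-\infty$. Combined with analyticity and strict convexity, this shows that $-\cP_\Lambda'$ maps $\bR$ bijectively onto the open interval $(\underline\chi(\Lambda),\overline\chi(\Lambda))$, which is precisely the interior of the effective domain of the Legendre conjugate $\cE_\Lambda$.
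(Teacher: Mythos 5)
Your overall route is exactly the one the paper intends: the paper offers no proof of this proposition, attributing it to Bowen--Ruelle and to the symbolic dynamics of a Markov partition with the equilibrium states realized as Gibbs states, and your reduction to a suspension flow over a subshift of finite type with H\"older roof and potential, followed by the Ruelle--Perron--Frobenius theorem and analytic perturbation of the leading eigenvalue, is that argument. Items (1), (3), (4), and the Legendre identity are handled correctly this way, and your treatment of item 5 by the same sandwich as in Proposition~\ref{convprop} is fine.

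The genuine gap is the strict convexity step. Ruelle's theory gives $\cP_\Lambda''(q)>0$ for all $q$ if and only if $\varphi^u$ is not cohomologous to a constant on $\Lambda$, and your plan to exclude this by exhibiting two closed orbits of $\Lambda$ with distinct exponents via Lemma~\ref{lem:dosha} and Lemma~\ref{lem:exp} is not substantiated: shadowing concatenations of arcs produces closed orbits whose exponents are Birkhoff averages of $-\varphi^u$ along the arcs chosen, and nothing in your sketch forces these averages to differ. Indeed they cannot differ when $M$ has constant negative curvature (then $\varphi^u$ is constant and $\cP_\Lambda$ is affine), nor when $\Lambda$ is a single closed orbit, which is a perfectly good basic set despite your claim that $\Lambda$ has infinitely many periodic orbits. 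So unconditional strict convexity is false as stated; it holds exactly when $\underline\chi(\Lambda)<\overline\chi(\Lambda)$, i.e.\ when $\varphi^u|_\Lambda$ is not cohomologous to a constant, and this is the implicit hypothesis under which the paper uses the proposition (the sets $\Lambda_\ell$ of Theorem~\ref{cor:dense} contain closed orbits with exponents near $\underline\chi$ and near $\overline\chi$, so the degeneracy is excluded there). If you do run the shadowing argument in the nondegenerate case, remember also that the shadowed closed orbits must be placed \emph{inside} $\Lambda$, which uses local maximality of the basic set. Finally, note that uniqueness of the supporting line at the endpoint slopes $-\underline\chi(\Lambda)$ and $-\overline\chi(\Lambda)$ does not follow from strict convexity on compact sets alone but from the asymptote description in item 5; for slopes in the open interval your deduction of (2) from strict convexity is correct.
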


The function $\cP_\Lambda$ contains the entire information about the Hausdorff dimension and the topological entropy of every level sets:   for every $\alpha \in (\underline\chi(\Lambda),\overline\chi(\Lambda))$ we have
\begin{equation}\label{dimfors}
\begin{split}
		\dim_{\rm H}\big( \cL(\alpha) &\cap\Lambda\big)
			= 1+2\,\cD_\Lambda(\alpha)\\
		h(\cL(\alpha)\cap\Lambda)
		= \cE_\Lambda(\alpha)
		=& \max\Big\{ h(\mu)\colon
			\chi(\mu)=\alpha\text{ and }\mu\in\cM(\Lambda)\Big\}
\end{split}
\end{equation}
(compare~\cite{BowRue:75},~\cite{PesSad:01},~\cite[Section 6]{BarDou:04}).
Moreover, the graph of
the function $\cP_\Lambda$ crosses the $q$-axis at $q = \delta_\Lambda$,
where $\delta_\Lambda$ is the Hausdorff dimension of the intersection of
$\Lambda$ with the unstable (or stable) manifold of a typical point in
$\Lambda$. The Hausdorff dimension of $\Lambda$ itself is $1 +
2\delta_\Lambda$.

%--------------------------------------------------------------------------------------------------
\subsection{The convex conjugate $\cE$}
%-------------------------------------------------------------------------------------------------

In this section we study the function
\[
	\cE(\alpha) =
	\inf_{q \in \bR}  \left( P(q\varphi^u) + q\alpha\right) .
\]
As noted in the introduction, $\cE$ is the conjugate of the convex function $\cP$ under the Legendre-Fenchel transform. In particular $\cE$ is concave.

\begin{lemma}\label{Echarac}
	For any $\alpha\in(\underline\chi,\overline\chi)$  we have
	\[
	\cE(\alpha)=
	\max\big\{h(\mu)\colon \mu\in\cM\text{ and } \chi(\mu)=\alpha\big\}.
	\]
\end{lemma}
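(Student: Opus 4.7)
The plan is to establish the equality by proving the two inequalities separately, both of which fall out rather directly from the variational principle and from part~(\ref{P9}) of Proposition~\ref{convprop}.

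\smallskip
\textbf{Upper bound for the candidate supremum.} First I would show that $\cE(\alpha)\ge h(\mu)$ for every invariant $\mu$ with $\chi(\mu)=\alpha$. Fix such a $\mu$. The variational principle~\eqref{varprinc} applied to the potential $q\varphi^u$ gives, for every $q\in\bR$,
\[
	\cP(q)=P(q\varphi^u)
	\ge h(\mu)+q\int\varphi^u\,d\mu
	= h(\mu)-q\chi(\mu)
	= h(\mu)-q\alpha.
\]
Rearranging and taking the infimum over $q$ yields $h(\mu)\le\inf_{q\in\bR}(\cP(q)+q\alpha)=\cE(\alpha)$. Hence $\cE(\alpha)\ge\sup\{h(\mu)\colon\chi(\mu)=\alpha\}$.

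\smallskip
\textbf{Realization by an invariant measure.} To obtain the reverse inequality \emph{and} to upgrade the supremum to a maximum, I would invoke Proposition~\ref{convprop}(\ref{P9}): since $\alpha\in(\underline\chi,\overline\chi)$, there is a measure $\mu_\alpha\in\cM$ with $\chi(\mu_\alpha)=\alpha$ whose graph $q\mapsto h(\mu_\alpha)-q\alpha$ is a supporting line of the convex function $\cP$. Let $q_\alpha$ be a contact point, so that
\[
	\cP(q_\alpha)=h(\mu_\alpha)-q_\alpha\alpha
	\quad\text{and}\quad
	\cP(q)\ge h(\mu_\alpha)-q\alpha\text{ for all }q\in\bR.
\]
The second relation rewrites as $\cP(q)+q\alpha\ge h(\mu_\alpha)$ for every $q$, hence $\cE(\alpha)\ge h(\mu_\alpha)$; the first relation gives $\cE(\alpha)\le \cP(q_\alpha)+q_\alpha\alpha=h(\mu_\alpha)$. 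Thus $h(\mu_\alpha)=\cE(\alpha)$.

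\smallskip
\textbf{Conclusion.} Combining the two steps, $\mu_\alpha$ attains the supremum among invariant measures with exponent $\alpha$, so the supremum is a maximum and equals $\cE(\alpha)$.

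\smallskip
There is no real obstacle here; the lemma is essentially a repackaging, via Legendre--Fenchel duality, of facts already encoded in Proposition~\ref{convprop}. The only non-trivial input is the existence of the maximizer $\mu_\alpha$, which was established in part~(\ref{P9}) using upper semi-continuity of the entropy map (coming from $h$-expansiveness of $g^1$) together with parts~(\ref{P2}) and~(\ref{P8}); one could equivalently observe directly that the level set of measures with $\chi(\mu)=\alpha$ is compact and the entropy map is upper semi-continuous on it.
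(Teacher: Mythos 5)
Your proof is correct and follows essentially the same route as the paper: the existence of the maximizer comes from Proposition~\ref{convprop}(\ref{P9}) (a measure $\mu_\alpha$ with $\chi(\mu_\alpha)=\alpha$ whose line is a supporting line of $\cP$, which forces $\cE(\alpha)=h(\mu_\alpha)$ by Legendre--Fenchel duality), and the bound $\cE(\alpha)\ge h(\mu)$ for every $\mu$ with $\chi(\mu)=\alpha$ comes from the variational principle. You merely spell out the duality step via the contact point $q_\alpha$, which the paper leaves implicit.
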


\begin{proof} By (\ref{P9}) of Proposition~\ref{convprop} there is a measure $\mu_\alpha$ such that $\chi(\mu_\alpha) = \alpha$ and 
 $$
 q \mapsto h(\mu_\alpha) - q\chi(\mu_\alpha)
 $$
 is a supporting line for the convex function $\cP$. Since $\cE$ is the conjugate of $\cP$, we obtain $\cE(\alpha) = h(\mu_\alpha)$.
 
 On the other hand, by the variational principle, the line $q \mapsto h(\mu) - q\chi(\mu)$ lies below the graph of $\cP$ for any invariant measure $\mu$. It follows that $\cE(\alpha) \geq h(\mu)$ for any measure $\mu$ with
 $\chi(\mu) = \alpha$.
	\end{proof}
	
	Now we can prove Theorem~\ref{entupperboundthm} that $h(\cL(\alpha)) \leq  \cE(\alpha)$ for $\alpha \in [\underline\chi,\overline\chi]$.

\begin{proof}[Proof of Theorem~\ref{entupperboundthm}]
For $v \in T^1M$, let $\cM(v)$ be the set of measures $\mu$ that are weak limits as $T \to \infty$ of the measures $\mu_{v,T}$ defined by
\[
	\int \varphi\,d\mu_{v,T}= \frac 1 T\int_0^T\varphi(g^t(v))\,dt
		\quad\text{ for all }\varphi\in C^0(T^1M).
\]
The definition of $\cL(\alpha)$ ensures that if $\mu \in \cM(v)$ for $v \in \cL(\alpha)$, then
\[
   \chi(\mu) = - \int \varphi^u\,d\mu = \alpha.
\]
It follows from Lemma~\ref{Echarac} that $h(\mu) \leq \cE(\alpha)$ for such a measure. We therefore have the inclusion
\[
	\cL(\alpha)
	\subset QR(\cE(\alpha))\eqdef
	\{w\in T^1M\colon \exists\,\mu\in \cM(w)\text{ with }h(\mu)\le \cE(\alpha)\},
\]
and hence $h(\cL(\alpha))\le h\big(QR(\cE(\alpha))\big)$. The sets $QR(\cdot)$ were studied by Bowen in~\cite{Bow:73b}. Theorem~2 of~\cite{Bow:73b} gives $h\big(QR(\cE(\alpha))\big)\le \cE(\alpha)$. Combining the last two inequalities proves the theorem.
\end{proof}

Finally, we can complete the proof of Theorem~\ref{Hnonzerothm}.

\begin{proof}[Proof of Theorem~\ref{Hnonzerothm}]
If $\cH \neq \emptyset$, Lemma~\ref{cornerlemma} tells us that $\cP(q) = 0$ for $q \geq 1$ and the supporting line of slope $-\alpha$ passes through $(1,0)$ for $0 \leq \alpha \leq \alpha_1$. It is immediate from the latter property and the definitions of the functions $\cD$ and $\cE$ that $\cD(\alpha) = 1$  and $\cE(\alpha) = \alpha$ for $0 \leq \alpha \leq \alpha_1$. Now we apply Theorems~\ref{mainthm} and~\ref{entupperboundthm} to obtain 
$\dim_{\rm H} \cL(\alpha) = 3$ and $h(\cL(\alpha)) = \alpha$ for $0 \leq \alpha \leq \alpha_1$.
\end{proof}

%--------------------------------------------------------------------------------------------------
\section{Uniformly hyperbolic (sub)systems}\label{s:4}
%--------------------------------------------------------------------------------------------------

	In this section we are going to study compact ${G}$-invariant uniformly hyperbolic sets (see~\cite{KatHas:95} for definition). In our setting, given any compact ${G}$-invariant hyperbolic set $\Lambda\subset\cR$, then over $\Lambda$ the splitting $F^s\oplus V\oplus F^u$ is the hyperbolic splitting.

\begin{lemma}\label{lem:hyp}
	A compact ${G}$-invariant set $\Lambda\subset T^1M$  is hyperbolic for ${G}$ if and only if $\Lambda \subset \cR$.
\end{lemma}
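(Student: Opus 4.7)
The lemma splits into two directions; the 'only if' is direct, and the 'if' direction is where the main work — and the main obstacle — lies.

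For 'only if', suppose toward contradiction that $v\in\Lambda\cap\cH$. Invariance of $\cH$ gives $K(\gamma_v(t))\equiv 0$, so the orthogonal Jacobi equation~\eqref{ej2} reduces to $J''=0$ along $\gamma_v$. Every orthogonal Jacobi field is therefore affine, $J(t)=(a+bt)E(t)$, and $\|(J(t),J'(t))\|^2=(a+bt)^2+b^2$ grows at most linearly in $|t|$. Since $dg^t$ is also an isometry on the flow direction $V$, no vector in $T_vT^1M$ is exponentially expanded by $dg^t$ as $t\to\pm\infty$, contradicting the existence of a one-dimensional unstable subbundle in any hyperbolic splitting at $v$.

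For 'if', let $\Lambda\subset\cR$ be compact and $G$-invariant, and take $F^s\oplus V\oplus F^u$ as the candidate splitting. By Section~\ref{sec:prelim-g}, this is $dG$-invariant and continuous on all of $T^1M$, and $F^s$ and $F^u$ are transverse precisely on $\cR$; continuity of the angle together with compactness of $\Lambda$ then yields uniformly bounded-below angles on $\Lambda$. What remains is uniform exponential expansion of $F^u$ along $G|_\Lambda$; the dual statement for $F^s$ follows by reversing time.

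My plan for the expansion is a two-step compactness argument whose first step I expect to be the delicate point. First, I would show that every $\mu\in\cM(\Lambda)$ has $\chi(\mu)>0$. Since $\varphi^u\leq 0$ by Lemma~\ref{nonneglemma}, $\chi(\mu)=-\int\varphi^u\,d\mu=0$ forces $\varphi^u=0$ $\mu$-a.e., and by flow-invariance and continuity of $\varphi^u$ (Fubini along orbits) this upgrades to $\varphi^u\equiv 0$ along $\gamma_v$ for $\mu$-a.e. $v$. Unwinding~\eqref{phidef} one sees that $\|dg^t|_{F^u_v}\|\equiv 1$ along such an orbit, so $j^2+(j')^2$ is constant along $\gamma_v$ for the generating unstable Jacobi field $J=jE$, forcing $j'\equiv 0$; the Riccati equation~\eqref{er} then gives $K(\gamma_v)\equiv 0$ and hence $v\in\cH$, contradicting $\mu(\Lambda)=1$. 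Weak$^*$-compactness of $\cM(\Lambda)$ together with weak$^*$-continuity of $\mu\mapsto\chi(\mu)$ then produces $c\eqdef\inf_{\mu\in\cM(\Lambda)}\chi(\mu)>0$. Second, I would promote this to the uniform pointwise bound $\int_0^T-\varphi^u(g^sv)\,ds\geq(c/2)T$ for every $v\in\Lambda$ and every $T\geq T_0$: a failure would yield sequences $v_n\in\Lambda$, $T_n\to\infty$ whose empirical measures $\mu_n=T_n^{-1}\int_0^{T_n}\delta_{g^sv_n}\,ds$ admit a weak$^*$ limit $\mu\in\cM(\Lambda)$ with $\chi(\mu)\leq c/2$, contradicting the first step. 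Since the cocycle identity and~\eqref{phidef} give $\log\|dg^T|_{F^u_v}\|=\int_0^T-\varphi^u(g^sv)\,ds$, this is exactly the required uniform exponential expansion.
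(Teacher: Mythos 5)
Your proof is correct, but for the substantive direction ($\Lambda\subset\cR$ implies hyperbolicity) it takes a genuinely different route from the paper. The paper argues purely geometrically: by compactness of $\Lambda$ there is a single $T>0$ such that every geodesic segment of length $T$ tangent to a vector in $\Lambda$ meets a point of negative curvature (otherwise such segments would accumulate on a vector of $\cH\cap\Lambda$); this forces the time-$T$ map to expand $F^u_v$ and contract $F^s_v$ at every $v\in\Lambda$, and continuity of the bundles plus compactness makes the expansion and contraction uniform. You instead run an ergodic-theoretic compactness argument: every $\mu\in\cM(\Lambda)$ has $\chi(\mu)>0$ (via Lemma~\ref{nonneglemma}, the Riccati equation~\eqref{er}, and the identity $\log\lVert dg^T|_{F^u_v}\rVert=\int_0^T-\varphi^u(g^sv)\,ds$ coming from~\eqref{phidef}), then weak$^*$ compactness of $\cM(\Lambda)$ and an empirical-measure argument upgrade positivity of all measure-theoretic exponents to a uniform pointwise lower bound on Birkhoff integrals of $-\varphi^u$, i.e.\ uniform expansion. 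Both are sound; the paper's proof is shorter and avoids measures entirely, while yours is an instance of a general scheme (continuous expansion rate with positive integral against every invariant measure implies uniform expansion) that transfers verbatim to other settings. Two small points you gloss over: to pass from constancy of $j^2+(j')^2$ to $j'\equiv0$ you need that a nonzero unstable Jacobi field never vanishes (immediate from monotonicity and convexity of $\lVert J\rVert$, but worth saying), and the identity $\log\lVert dg^T|_{F^u_v}\rVert=\int_0^T-\varphi^u(g^sv)\,ds$ deserves a one-line justification from the cocycle property of the one-dimensional bundle. Your ``only if'' direction, via~\eqref{ej2} with $K\equiv0$ giving at most linear growth of all Jacobi fields, is the same as the paper's (which simply notes the exponents vanish), just more explicit.
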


\begin{proof}
If $\Lambda \cap \cH \neq \emptyset$, then $\Lambda$ is not hyperbolic because it contains the tangent vector to a geodesic along which the curvature is always zero and hence has zero exponent.

We now suppose that $\Lambda \subset \cR$. Then there is $T > 0$ such the every geodesic segment of length $T$ tangent to a vector in $\Lambda$ contains a point at which the curvature is negative. For otherwise there would be geodesic segments tangent to vectors in $\Lambda$ of arbitrary length along which the curvature vanishes; these segments would accumulate on an  entire geodesic along which the curvature vanishes, and the tangent vector to this geodesic would be in $\cH \cap \Lambda$.

Our choice of $T$ ensures that the time $T$ map of the geodesic flow expands $F^u_v$ and contracts $F^s_v$ for all $v \in \Lambda$.
Since the bundles $F^u$ and $F^s$ are continuous it follows from the  compactness of $\Lambda$ that this expansion and contraction is uniform for all $v \in \Lambda$.
\end{proof}

%%G: changed hypothesis
\begin{proposition}\label{p:topone}
	If $\cH\ne\emptyset$ then any compact $G$-invariant set $\Lambda\subset\cR$, $\Lambda\ne\cR$, has topological dimension $1$.
\end{proposition}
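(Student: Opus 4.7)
The plan is to establish both inequalities $\dim_{\mathrm{top}} \Lambda \geq 1$ and $\dim_{\mathrm{top}} \Lambda \leq 1$. The lower bound is immediate: since $M$ has negative Euler characteristic, the geodesic flow has no fixed points on $T^1M$, so the nonempty $G$-invariant set $\Lambda$ contains a nontrivial orbit---a one-dimensional immersed submanifold---giving $\dim_{\mathrm{top}} \Lambda \geq 1$.

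For the upper bound I first establish three empty-interior statements, each by a contradiction using the minimality of the stable and unstable foliations stated earlier in the paper. \emph{(A)} $\Lambda$ has empty interior in $T^1M$: if $U=\interior(\Lambda)$ were nonempty, it would be open and $G$-invariant; for $v \in U \subset \cR$ and $w \in W^s(v)$, the flat-strip alternative of Lemma~\ref{flatstriplemma} is ruled out because $\gamma_v$ crosses a point of negative curvature, so $d(g^t(w),g^t(v))\to 0$, whence $g^t(w)\in U$ for large $t$ and $w\in U$ by $G$-invariance; thus $W^s(v)\subset U$, and by minimality of $W^s$, $\overline{U}=T^1M\subset \Lambda$, contradicting $\Lambda\subset\cR\neq T^1M$ (here $\cR\neq T^1M$ since $\cH\neq\emptyset$). \emph{(B)} For every $v\in\Lambda$ the slice $\Lambda\cap W^u_\epsilon(v)$ has empty interior in the unstable leaf: if an open arc $I\subset W^u_\epsilon(v)\cap\Lambda$ existed, uniform hyperbolic expansion on the compact hyperbolic set $\Lambda$ (Lemma~\ref{lem:hyp}) would force $g^t(I)\subset\Lambda$ to have length at least $|I|e^{\lambda t}$ in $W^u(g^t v)$; extracting $g^{t_n}(v)\to v^\ast\in\Lambda$ and using continuity of the foliation $W^u$ would give $W^u(v^\ast)\subset\Lambda$, and minimality of $W^u$ would again force $\Lambda=T^1M$, a contradiction. \emph{(C)} The symmetric statement for $\Lambda\cap W^s_\epsilon(v)$ follows by running (B) backwards in time.

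Finally, to deduce $\dim_{\mathrm{top}} \Lambda \leq 1$, I fix a small transversal $\Sigma$ to the flow at a point $v_0\in\Lambda$. Since $\Lambda$ is locally the flow-saturation of $\Lambda\cap\Sigma$, we have $\dim_{\mathrm{top}} \Lambda = \dim_{\mathrm{top}}(\Lambda\cap\Sigma)+1$, and it suffices to show $\Lambda\cap\Sigma$ is totally disconnected. The weak unstable and weak stable foliations $W^{0u},W^{0s}$ cut $\Sigma$ into two transverse one-dimensional foliations; for each $w\in\Lambda\cap\Sigma$ the intersection of the corresponding weak leaf with $\Lambda\cap\Sigma$ is flow-homeomorphic to $\Lambda\cap W^{u/s}_\epsilon(w)$, so by (B), (C) every such leaf meets $\Lambda\cap\Sigma$ in a totally disconnected set. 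Suppose for contradiction that a connected component $C\subset\Lambda\cap\Sigma$ contained more than one point; then $C$ would have nondegenerate transverse extent in one of the two foliations, and iterating the hyperbolic Poincar\'e return map $P$ forward (or its inverse) would expand this extent exponentially, contradicting $P^{\pm n}(C)\subset\Lambda$ bounded. The main technical obstacle I anticipate is that for a non--locally-maximal $\Lambda$ the return map $P$ need not be globally defined on $\Lambda\cap\Sigma$; this is precisely the point at which Anosov's argument, cited in the paper, is invoked, using the compactness of $\Lambda$ together with the hyperbolicity of $G$ to guarantee enough recurrence to run the expansion-contraction analysis at every point of $C$.
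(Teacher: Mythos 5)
Your steps (A)--(C) are essentially sound, modulo minor glosses: in (A) the flat-strip alternative is excluded not because $\gamma_v$ meets a point of negative curvature, but because the $\omega$-limit of $v$ stays in the compact hyperbolic set $\Lambda\subset\cR$ while the edge of a flat strip is tangent to vectors of $\cH$; and in (B) the limit of the growing arcs first yields only an unstable ray inside $\Lambda$, and a second limiting argument is needed before minimality can be applied. The genuine gap is the final step, where you pass from the leafwise statements (B), (C) to total disconnectedness of $\Lambda\cap\Sigma$. Knowing that every stable and every unstable slice of $\Lambda\cap\Sigma$ is totally disconnected does not imply that $\Lambda\cap\Sigma$ is: the diagonal $\{y=x\}$ in the plane meets every horizontal and every vertical line in at most one point, yet is connected. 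Your patch --- a nondegenerate connected component $C$ has nondegenerate extent in one of the two directions, and iterating the Poincar\'e return map expands this extent, ``contradicting boundedness'' --- is precisely where the difficulty sits and is not yet an argument: expansion alone contradicts nothing (the extent is bounded only by $\diam \Lambda$), the return map need not be defined or iterable on $C$, and to reach a contradiction you would have to extract from the images $g^t(C)$ a nondegenerate connected subset of $\Lambda$ lying inside a single unstable (or stable) leaf, i.e.\ an arc, and only then invoke (B)/(C); that extraction (uniform shrinking of the stable extent of $g^t(C)$, Hausdorff convergence of connected sets, control of the flow direction) is the real content and is missing. Moreover, your appeal to ``Anosov's argument, cited in the paper'' is misplaced: that argument is the proof of Proposition~\ref{prop:exis2}, which encloses $\Lambda$ in a locally maximal set and takes as input exactly the conclusion you are trying to prove (the zero-dimensionality of $\Lambda\cap S$ is what makes the disjoint compact cover $\cK_0$ possible), so invoking it here is circular.

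The paper's proof avoids all of this by using the hypothesis $\cH\neq\emptyset$ in an essential geometric way. Pick $w\in\cH$; by minimality of $W^s$ and $W^u$, any $v\in\Lambda$ is surrounded in a transversal $S$ by arbitrarily small quadrilaterals whose four boundary arcs lie in leaves $W^{0u}$, $W^{0s}$ through points of $W^{0u}(w)\cup W^{0s}(w)$; by Corollary~\ref{p:minimal} every vector on these arcs has forward or backward exponent $0$, hence lies outside the uniformly hyperbolic set $\Lambda$ (Lemma~\ref{lem:hyp}). Thus $v$ has arbitrarily small neighborhoods in $\Lambda\cap S$ with empty relative boundary, so $\Lambda\cap S$ has topological dimension $0$ directly --- no return map and no slice-to-section disconnectedness step. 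Note that in your scheme $\cH\neq\emptyset$ is used only to know $\cR\neq T^1M$, a sign that you are proving the statement by a harder route than necessary, and the step you left open is exactly the hard part.
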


\begin{proof}
%%G:	This claim can be proved using the minimality of the horocycle flow \cite{Ebe:77}  and properties of the Lyapunov exponent. Eberlein showed in \cite{Ebe:73trans} that the foliations $W^s$ and $W^u$ are minimal.
	This claim can be proved using the minimality of the horocycle flow and properties of the Lyapunov exponent. Eberlein showed in~\cite{Ebe:73trans} that the foliations $W^s$ and $W^u$ are minimal.
	It follows immediately that if $v \in T^1M$ is a vector such that the spaces $F^s_v$, $F^u_v$, and the generator $V(v)$ of the geodesic flow span $T_vT^1M$ and if $w$ is any vector in $T^1M$, then $W^{0s}(w) \cap W^{u}(v)$ is dense in a neighborhood of $v$ in $W^{u}(v)$, and $W^{0u}(w) \cap W^{s}(v)$ is dense in a neighborhood of $v$ in $W^{s}(v)$. In particular these properties hold for any $v \in \cR$.

Now suppose that $\Lambda \subset \cR$ is a compact invariant set and $\Lambda\ne\cR$.
Choose $v \in \Lambda$ and $w \notin \Lambda$. Then $v$ lies in the interior of arbitrarily short arcs in $W^s(v)$ whose endpoints $w_s$ and $w'_s$ belong to $W^{0u}(w)$. Also $v$ lies in the interior of arbitrarily short arcs in $W^u(v)$ whose endpoints $w_u$ and $w'_u$ belong to $W^{0s}(w)$.
Let $S$ be a smooth disc in $T^1M$ transverse to the geodesic flow at $v$. Then $v$ lies in the interior of an open set in $S$ bounded by four arcs that lie in the intersections of $S$ with $W^{0u}(w_s)$, $W^{0u}(w'_s)$, $W^{0s}(w_u)$, and $W^{0s}(w'_u)$.
%%G: It follows from Proposition~\ref{p:minimal} that all of the vectors in these arcs have a forward or backward exponent that is $0$.
In particular, when choosing $w\in \cH$, it follows from Corollary~\ref{p:minimal} that all of the vectors in these arcs have a forward or backward exponent that is $0$. The arcs are therefore disjoint from the hyperbolic set $\Lambda$. This shows that the intersection of $\Lambda$ with the transversal $S$ has topological dimension $0$.
It follows immediately that the flow invariant set $\Lambda$ has topological dimension $1$.
This finishes the proof of the proposition.
\end{proof}

Using the classical approach (see for example~\cite{PesSad:01}, in order to calculate multifractal properties in some compact $G$-invariant set $\Lambda\subset T^1M$ it is crucial that this set is also hyperbolic, topologically transitive, and locally maximal. For that recall that ${G}|_\Lambda$ is \emph{topologically transitive} if for any nonempty open sets $U$ and $V$ intersecting $\Lambda$ there exists $t\in\bR$ such that ${g}^t(U)\cap V\cap \Lambda\ne\emptyset$.
Further recall that $\Lambda\subset  T^1M$ is said to be \emph{locally maximal} if there exists a neighborhood $U$ of $\Lambda$ such that $\Lambda=\bigcap_{t\in\bR}{g}^t(\overline U)$.
%%G: moved definition of basic and comment of decomposition to later place

The following proposition is one of the key results in our approach. Note that sets in its hypothesis indeed exist by the construction~\eqref{guy} below and by Proposition~\ref{p:topone}.

\begin{proposition}\label{prop:exis2}
	Let $\Lambda \subset \cR$ be a compact $G$-invariant set of topological dimension $1$. Then every neighborhood of $\Lambda$ contains a compact $G$-invariant hyperbolic locally maximal set $\widetilde\Lambda$ of topological dimension $1$ such that  $\Lambda \subset \widetilde\Lambda$.
\end{proposition}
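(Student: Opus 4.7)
My plan is to enclose $\Lambda$ in a trapping region $U$ built from ``hyperbolic rectangles'' in transversals to the flow, and then to take
$$
\widetilde\Lambda\eqdef\bigcap_{t\in\bR}g^t(\overline U).
$$
Because $\cR$ is open and $\Lambda\subset\cR$ is compact, I first fix an open set $V$ with $\Lambda\subset V$ and $\overline V\subset\cR$, and arrange $U\subset V$; Lemma~\ref{lem:hyp} will then guarantee that the resulting $\widetilde\Lambda$ is uniformly hyperbolic automatically. Local maximality will be immediate from the trapping construction, as will $\Lambda\subset\widetilde\Lambda$. The crucial enabling property is that $\Lambda$ has topological dimension~$1$ and is $G$-invariant, so each transversal slice $\Lambda\cap D$ is a compact, zero-dimensional, and hence totally disconnected subset of the disc~$D$.

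To build $U$, I fix $\tau>0$ small and cover $\Lambda$ by finitely many flow boxes $B_i=g^{(-\tau,\tau)}(\interior D_i)$ with $D_i$ a small smooth disc transverse to the flow at some $v_i\in\Lambda$. Uniform hyperbolicity on $\overline V$ supplies continuous local stable and unstable manifolds of uniform size, which induce product coordinates $(x^u,x^s)$ on each $D_i$ near $\Lambda\cap D_i$. Since $\Lambda\cap D_i$ is compact and totally disconnected in $D_i$, I can select a finite family of pairwise disjoint ``rectangles'' $R_{i,j}=I^u_{i,j}\times I^s_{i,j}\subset D_i$ covering $\Lambda\cap D_i$, with diameters below any prescribed $\varepsilon$, and whose four boundary arcs (each lying on a local stable or unstable manifold) are disjoint from $\Lambda$. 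Putting $U=\bigcup_{i,j}g^{(-\tau,\tau)}(R_{i,j})\subset V$, the set $\widetilde\Lambda$ is then compact, $G$-invariant, contains $\Lambda$, is locally maximal with trapping neighborhood $U$, and, being contained in $\overline V\subset\cR$, is hyperbolic by Lemma~\ref{lem:hyp}.

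The hard step will be to verify that $\widetilde\Lambda$ has topological dimension equal to~$1$, not larger. Since $\widetilde\Lambda$ is locally maximal and uniformly hyperbolic with one-dimensional unstable and stable bundles, it inherits a local product structure of the form $\widetilde\Lambda\cap R_{i,j}=K^u_{i,j}\times K^s_{i,j}$ with $K^u_{i,j}\subset I^u_{i,j}$ and $K^s_{i,j}\subset I^s_{i,j}$ compact, so it will suffice to rule out the possibility that some $K^u_{i,j}$ or $K^s_{i,j}$ contains an interval. If, say, $K^u_{i,j}$ contained an open unstable arc $J\subset\widetilde\Lambda$, then $G$-invariance together with uniform unstable expansion would place arbitrarily long unstable arcs inside $\widetilde\Lambda$; combined with the minimality of the unstable foliation $W^u$ established by Eberlein (cited in Section~\ref{sec:prelim-g}) and the fact that $\widetilde\Lambda\subset\overline V\subsetneq T^1M$, this yields a contradiction. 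Consequently both $K^u_{i,j}$ and $K^s_{i,j}$ are totally disconnected, $\widetilde\Lambda\cap D_i$ is zero-dimensional, and combined with the $1$-dimensional flow direction this gives $\widetilde\Lambda$ topological dimension~$1$, as required.
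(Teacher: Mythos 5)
Your reduction of hyperbolicity to Lemma~\ref{lem:hyp} is fine (the paper makes the same observation: any compact invariant set in a small neighborhood of $\Lambda$ lies in $\cR$ and is automatically hyperbolic), but the sentence ``local maximality will be immediate from the trapping construction'' assumes away precisely the difficulty that this proposition exists to overcome. If you set $\widetilde\Lambda\eqdef\bigcap_{t}g^t(\overline U)$, then $\widetilde\Lambda$ is locally maximal only if $U$ (or some other set) actually isolates it, i.e.\ $\widetilde\Lambda\subset\interior U$; nothing in your construction rules out orbits that stay in $\overline U$ for all time while touching $\partial U$. Your boundary condition keeps $\partial R_{i,j}$ disjoint from $\Lambda$, but $\widetilde\Lambda$ is in general strictly larger than $\Lambda$, and the side faces $g^{(-\tau,\tau)}(\partial R_{i,j})$ may well meet $\widetilde\Lambda$. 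Equivalently, the local product structure you later invoke is not automatic: for two orbits that remain in $\overline U$ forever, their bracket point only shadows them approximately and can escape through the gaps between your rectangles, so there is no reason that $\widetilde\Lambda\cap R_{i,j}$ has the product form $K^u_{i,j}\times K^s_{i,j}$. This is exactly the failure mode behind the examples of Crovisier \cite{Cro:02} and Fisher \cite{Fis:06} cited in the introduction, and it is why the paper does not take a maximal invariant set at all: following Anosov, it covers the zero-dimensional cross-section $\Lambda\cap S$ by finitely many disjoint compact sets, codes nearby orbits by the subshift of admissible sequences, defines $\widetilde\Lambda$ as the flow saturation of the set of shadowed orbits $\psi(\Sigma)$, and obtains the local product structure (hence local maximality) from concatenation of symbolic pasts and futures. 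The genuinely technical step there -- refining the cover to the families $\cK_N$ so that the shadowing map $\psi$ is injective -- has no counterpart in your sketch, and without it neither compactness-with-product-structure nor the zero-dimensionality of the cross-section of $\widetilde\Lambda$ is secured.

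Two further points, secondary to the main gap. First, your dimension argument is circular as written, since it presupposes the product structure that was to be derived from the unproven local maximality; in the paper the bound $\dim\widetilde\Lambda=1$ is free, because the cross-section is a continuous injective image of a zero-dimensional shift space, and when $\cH\neq\emptyset$ you could alternatively just quote Proposition~\ref{p:topone}. Second, in your contradiction argument the minimality of $W^u$ (density of each leaf) does not by itself imply that the expanded arcs $g^t(J)$ become $\varepsilon$-dense; you need the uniform statement that all sufficiently long unstable arcs are $\varepsilon$-dense, which requires an additional compactness/continuity-of-the-foliation argument. Both of these are repairable, but the local-maximality step is a genuine missing idea rather than a detail.
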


\begin{proof}
We use an argument of Anosov~\cite{Ano:}.
The set  $\Lambda$ is hyperbolic and we can assume that the neighborhood of $\Lambda$ is small enough so that any invariant set contained in it is also hyperbolic. It is well known that a compact hyperbolic set is locally maximal if and only if it has a local product structure. Rather than constructing $\widetilde\Lambda$ itself, we construct its intersection with a suitable cross section to the flow $S$. This intersection is obtained as the image of a subshift of finite type $\Sigma$  under a continuous and injective map $\psi$. This ensures that $\widetilde\Lambda \cap S$ is compact, since it is the continuous image of a compact space. Moreover
the subshift $\Sigma$ has a natural local product structure: the product of two bi-infinite sequences with the same zeroth term is formed by concatenating the future half of one sequence and the past of the other sequence. The continuous injection $\psi$ will carry this local product structure to a local product structure on $\widetilde\Lambda \cap S$, which is then inherited by $\widetilde\Lambda$.
Since this subshift $\Sigma$ has topological dimension $0$, it is obvious that $\widetilde\Lambda$ will have topological dimension $1$.

We begin by choosing the cross section $S$. It is the union of a finite collection of smoothly embedded closed two dimensional discs such that
$\Lambda$ does not intersect the boundary of any disc. The discs should be
pairwise disjoint, transverse to the geodesic flow, and close enough together so that $S$ intersects every orbit segment of length $1$.
 Let $\cT: S \to S$ be the first return map of the geodesic flow to $S$ and define $\tau(v)$ for $v \in S$ by the equation $$\cT(v) = g^{\tau(v)}(v).$$
 There is $\tau_{\rm min} > 0$ such that $\tau(v) \in [\tau_{\rm min},1]$ for all $v \in S$.

 Let us call a sequence $\{X_i\}_{i = -\infty}^\infty$ of subsets of $S$ an {\em $\varepsilon$-pseudo orbit} if  for each $i$
 \begin{itemize}
 	\item $X_i$ lies in a single disc belonging to $S$;
 	\item $\diam(X_i) < \varepsilon$;
 	\item there is $v_i \in X_i \cap \Lambda$ such that $\cT(v_i) \in X_{i+1}$.
 \end{itemize}
We will say that $v \in S$ is a $\delta$-shadow of the pseudo orbit $\{X_i\}$ if for each $i$ the set $X_i$ lies in the same disc belonging to $S$ as $\cT^i(v)$ and
 $$
 d(x_i, \cT^i(v)) < \delta \qquad\text{for all $x_i \in X_i$.}
 $$
 Note that $v$ is not required to be in $\Lambda$, but the $\cT$-orbit of $v$ lies in the $\delta$-neighborhood of $\Lambda$. Since $G|_\Lambda$ is hyperbolic,
there is $\delta_0$ such that any pseudo orbit has at most one $\delta_0$-shadow, and for any positive $\delta \leq \delta_0$ there is $\varepsilon(\delta) > 0$ such that every $\varepsilon(\delta)$-pseudo orbit has a  $\delta$-shadow, which is unique. Let $\varepsilon_0 = \varepsilon(\delta_0)$.

Next choose a  collection  $\cK_0$ of subsets of $S$ such that the following is satisfied:
\begin{enumerate}
\item the collection $\cK_0$ is finite;
\item the sets in $\cK_0$ are compact and pairwise disjoint;
\item each $K \in \cK_0$ lies in one of discs in $S$ and has diameter $\le \min\{\delta_0,\varepsilon_0\}$;
\item $\cT$ is smooth on each $K\in \cK_0$;
\item each $K \in \cK_0$ contains a point of $\Lambda \cap S$;
\item $S \cap \Lambda \subset \bigcup_{K \in \cK_0} \Int K$.
\end{enumerate}
This is possible because the topological dimension of $\Lambda$ is $1$ and of $\Lambda \cap S$ is $0$.

We now define $\cK_N$ to be the collection of all sets that contain an element of $\Lambda \cap S$ and have the form
 $$
 \bigcap_{j= -N}^N \cT^{-j} K_j,
 $$
 where $K_{-N}$, $\dots$, $K_N$ is a sequence of (not necessarily distinct) sets in $\cK_0$. The analogues of properties 1--6 above hold for each $\cK_N$. Note that each element of $\cK_N$ is contained in a set from $\cK_{N-1}$.

  A bi-infinite sequence $\{K^N_i\}_{i = -\infty}^\infty$ in $ \cK_N$ will be called {\em admissible} if for each index $i$ there is a point $v_i \in K^N_i \cap \Lambda$ such that $\cT(v_i) \in K^N_{i+1}$. An admissible sequence is an $\varepsilon_0$-pseudo orbit and therefore has a unique $\delta_0$-shadow.
 Let us denote by $\psi$ the map that takes an admissible sequence to its $\delta_0$-shadow.

 We will show that $\psi$ must be injective if $N$ is large enough. We begin by showing the following lemma.

\begin{lemma} \label{sizelemma}
	The maximum diameter of a set in $\cK_N$ tends to $0$ as $N \to \infty$.
\end{lemma}

\begin{proof}
	If not then there is a sequence of sets $K^0 \supset K^1 \supset K^2 \supset \cdots$ whose diameters do not tend to $0$ with $K^N \in \cK_N$ for each $N$. Then
\[
 	K^\infty = \bigcap_{N=0}^\infty K^N
\]
 contains (at least) two distinct elements $v$ and $v'$. We may assume that $v \in \Lambda$ because
$\Lambda$ is compact and each $K^N$ contains contains an element  of $\Lambda$.  Let $K_i(v)$ be the element of $\cK_0$ that contains $\cT^i(v)$. Then
 $$
 K^\infty = \bigcap_{i = -\infty}^\infty \cT^{-i}K_i(v).
 $$
We see that $\{K_i(v)\}_{i = -\infty}^\infty$ is an admissible sequence in $\cK_0$ with two distinct $\delta_0$-shadows, namely $v$ and $v'$. This is impossible, which proves the lemma.
 \end{proof}

Now choose $\delta_1 \in (0, \delta_0)$ such that $\Lambda \cap K$ lies in the $\delta_1$-interior of $K$ for each $K \in \cK_0$ and let $\varepsilon_1 = \varepsilon(\delta_1)$.  By Lemma~\ref{sizelemma}, we can choose $N$ large enough so that every admissible sequence in $\cK_N$ is an $\varepsilon_1$-pseudo orbit and therefore has a unique $\delta_1$-shadow, which must be the image of the sequence under the map $\psi$.

Finally suppose that $v \in S$ is  the image under the map $\psi$ of  two admissible sequences $\{K^N_i\}_{i= -\infty}^{\infty}$ and $\{L^N_i\}_{i= -\infty}^{\infty}$ in $\cK_N$.
For each $i$, choose $v_i \in \Lambda \cap K^N_i$ and
let $K_i$ be the set in $\cK_0$ that contains $K^N_i$. Since $d(v_i,\cT^i(v)) < \delta_1$ and $v_i$ lies in the $\delta_1$-interior of $K_i$, we see that $\cT^i(v) \in K_i$. Similarly, $\cT^i(v) \in L_i$
for each $i$, where $L_i$ is the set in $\cK_0$ that contains $L^N_i$. Since the sets in $\cK_0$ are pairwise disjoint, we have $K_i = L_i$ for all $i$. But this gives us $K^N_i = L^N_i$ for all $i$, because
 $$
 K^N_i = \bigcap_{j = i - N}^{i+N} K_j\quad\text{and}\quad L^N_i = \bigcap_{j = i - N}^{i+N} L_j.
 $$
 This verifies that $\psi$ is injective.

 The subshift $\Sigma$ mentioned in the beginning of this proof of the proposition consists of the admissible sequences in $\cK_N$ for a suitably large $N$. The
 basic set $\widetilde\Lambda$ contains all orbits that pass through a vector in $\psi(\Sigma)$. This finishes the proof of Proposition~\ref{prop:exis2}.
\end{proof}

%--------------------------------------------------------------------------------------------------
\section{Bridging between uniformly hyperbolic subsystems}\label{s:5}
%-----------------------------------------------------------------------------------------------

The essential approach to prove our main result, Theorem~\ref{mainthm},  is to construct a family of basic sets $\Lambda_\ell$ that ``exhaust'' the non-uniformly hyperbolic set $T^1M$  so that for each of these sets we can determined their multifractal properties. Based on this fact we will later show convergence of corresponding quantifiers such as pressure, dimension, and spectrum of Lyapunov exponents. In this section we explain how to produce hyperbolic sets and, in particular, how to produce sufficiently large basic sets with nice properties.

First, from Lemma~\ref{lem:dosha} in particular we obtain that any closed pseudo orbit in a compact set of rank $1$ vectors is shadowed by a closed orbit of ${G}$. We will use this fact in the proof of the following proposition that shows the existence of simplest hyperbolic subsets (closed orbits) with arbitrarily small degree of hyperbolicity.

\begin{proposition}\label{lem:exist-2}
	$\cH\ne\emptyset$ if and only if there exist vectors that are tangent to closed geodesics and have arbitrarily small positive Lyapunov exponent.
	If $\cH=\emptyset$ then there exist vectors that are tangent to closed geodesics and have a positive Lyapunov exponent arbitrarily close to $\underline\chi$.
\end{proposition}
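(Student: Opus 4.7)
The plan has two parts matching the structure of the proposition. The biconditional is essentially a rewording of Proposition~\ref{lem:exist-1}: each closed geodesic in $M$ lifts to a periodic orbit of $G$ in $T^1M$ traced by its unit tangent vectors, and on such a periodic orbit the Lyapunov exponent $\chi$ is well-defined and constant, equal to $(1/\tau)\log$ of the expansion factor of $dg^\tau$ on $F^u$. So "closed geodesic with arbitrarily small positive exponent" and "vector tangent to a closed geodesic with arbitrarily small positive exponent" describe the same data. I would simply cite Proposition~\ref{lem:exist-1} together with this observation.

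For the supplementary statement I would assume $\cH=\emptyset$, so $G$ is Anosov, $\underline\chi>0$, and by compactness there is a uniform $\kappa>0$ with $K\le-\kappa^2$ everywhere. The plan is to shadow a long orbit segment of a generic point for the ergodic measure realizing $\underline\chi$. By Corollary~\ref{expmeas} there is $\underline\mu\in\cM_{\rm e}$ with $\chi(\underline\mu)=\underline\chi$. By the Birkhoff ergodic theorem applied to the continuous function $\varphi^u$, and by Poincar\'e recurrence, I can find, for any $\delta>0$, $\varepsilon'>0$, and $T$ large, a vector $v$ with
\[
\Bigl|\,\frac{1}{T}\int_0^T-\varphi^u(g^tv)\,dt-\underline\chi\,\Bigr|<\varepsilon'
\qquad\text{and}\qquad
d(g^Tv,v)<\delta.
\]
Taking $\delta$ as supplied by Lemma~\ref{lem:dosha} (with some fixed time $\tau_0>0$) applied to the single transition $g^Tv\rightsquigarrow v$, the shadowing lemma produces a closed geodesic $\beta$ of period $\tau$ close to $T$ whose orbit $\varepsilon'$-shadows $\{g^tv\}_{0\le t\le T}$.

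By continuity of $\varphi^u$,
\[
\chi(\dot\beta(0))=-\frac{1}{\tau}\int_0^\tau\varphi^u(g^t\dot\beta(0))\,dt
\]
differs from $-(1/T)\int_0^T\varphi^u(g^tv)\,dt$ by $O(\varepsilon'+1/T)$, so for $\varepsilon'$ small and $T$ large it lies within a prescribed $\varepsilon>0$ of $\underline\chi$; since $\underline\chi>0$, the exponent is also positive, producing the required family of closed geodesics. The main obstacle, such as it is, is simply controlling the bookkeeping so that the average of $\varphi^u$ along the shadowing periodic orbit stays close to the target average along the $\underline\mu$-generic segment; every ingredient (uniform shadowing, realization of $\underline\chi$ by an ergodic measure, continuity of $\varphi^u$) is already available, and the argument is closely modelled on the construction in the proof of Proposition~\ref{lem:exist-1}, with $\underline\chi>0$ now playing the role that $0$ played there.
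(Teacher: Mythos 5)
Your treatment of the biconditional is fine and is essentially the paper's: the paper in fact just repeats the proof of Proposition~\ref{lem:exist-1} word for word, so citing that proposition together with the remark that the exponent of a closed geodesic is by definition the exponent of its (periodic) tangent vectors is all that is needed. For the supplementary statement the paper gives no details beyond ``the claim then follows from a shadowing argument'' (compare also Proposition~\ref{chioverlineprop}), and your recipe --- take $\underline\mu$ from Corollary~\ref{expmeas}, pick a Birkhoff-generic recurrent point, close up the long orbit segment with Lemma~\ref{lem:dosha}, and compare averages of the continuous function $\varphi^u$ --- is exactly the intended argument.

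There is, however, one false step in your write-up: $\cH=\emptyset$ does \emph{not} imply that $K\le-\kappa^2$ everywhere on $M$. It only means that every complete geodesic passes through some point of negative curvature (equivalently, $G$ is Anosov); the curvature may still vanish on a nonempty subset of $M$ --- this is precisely the difference between the rank~$1$ nonpositive-curvature setting and strictly negative curvature. The point matters because Lemma~\ref{lem:dosha} requires the bound $K\le-\kappa^2$ at the anchor points $p_j$ of the pseudo-orbit, so you cannot invoke it at an arbitrary recurrence point. The gap is easy to close: choose the recurrent, $\underline\mu$-generic vector $v$ with footpoint in the open set $U=\{p\in M\colon K(p)<0\}$. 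This is possible because the support of $\underline\mu$ is flow-invariant and every geodesic meets $U$ (as $\cH=\emptyset$), hence $\underline\mu\big(\{w\colon K(\pi(w))<0\}\big)>0$, and $\underline\mu$-almost every vector in this set is simultaneously Birkhoff generic for $\varphi^u$ and recurrent. Taking $\kappa$ with $K(\pi(v))\le-\kappa^2$ and anchoring the single jump $g^Tv\rightsquigarrow v$ at $\pi(v)$, your comparison of the averaged $\varphi^u$ along the shadowing closed orbit with the average along the generic segment goes through (uniform continuity of $\varphi^u$ plus the fact that the period of the closed orbit differs from $T$ by a bounded amount), and the resulting exponents lie within any prescribed $\varepsilon$ of $\underline\chi$ and are automatically positive since every exponent is at least $\underline\chi>0$ in the Anosov case.
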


\begin{proof}
	Assume that $\cH\ne\emptyset$. Consider $a>0$ small enough so that there are points of $M$ at which the curvature is less than $-a^2$. Choose a vector $v\in\cH$ and a sequence $\gamma_k$ of geodesics such that $\dot\gamma_k(0) \to v$ in $T^1M$. Since the curvature of $M$ is $0$ at all points of $\gamma_v$, we see that for all large enough $k$ the times $t_k^+=\inf\{t>0\colon K(\gamma_{k}(t))<-a^2/2\}$ and
%%G:	$t_k^-=\inf\{t<0\colon K(\gamma_{k}(t))<-a^2/2\}$
	$t_k^-=\sup\{t<0\colon K(\gamma_{k}(t))<-a^2/2\}$ are well-defined. Moreover $t_k^+\to\infty$ and $t_k^-\to-\infty$ as $k\to\infty$. By passing to a subsequence, we may assume that the sequences $\dot\gamma_{k}(t_k^-)$ and $\dot\gamma_{k}(t_k^+)$ converge to vectors $w_-\in T_{p_-}M$ and $w_+\in T_{p_+}M$, respectively. Note that
$K(p_\pm)=-a^2/2$ and $K(\gamma_k(t)) \ge -a^2/2$ for $t_k^-< t< t_k^+$.

Given $\varepsilon>0$, choose $\delta>0$ as in the conclusion of Lemma~\ref{lem:dosha}  applied to the points $p_-$ and $p_+$ and some fixed time $\tau>0$.
	Fix a vector $w\in\cR$ that satisfies $d(w,w_+)<\delta/2$ and for which there exists a time $T>\tau$ such that $d(\dot\gamma_w(T),w_-)<\delta/2$.
 By Lemma~\ref{lem:dosha},  there is for any large enough $k$ a closed geodesic $\beta_k$ with period $\tau_k$ close to
$t^+_k - t^-_k + T$ such that $d(\dot\beta_k(t),\dot\gamma_k(t))<\varepsilon$ for $t^-_k \le t \le t^+_k$ and
$d(\dot\beta_k(t),\dot\gamma_w(t)) <\varepsilon$ for $t^+_k \leq t \le  t^+_k + T$.  Our construction ensures that
 $$
 \frac1{\tau_k} \int_0^{\tau_k} K(\beta_k(t))\,dt \to 0\qquad\text{as $k \to \infty$.}
 $$
 It follows immediately from  Lemma~\ref{lem:exp} that $\chi(\dot\beta_k(0)) \to 0$ as $k \to \infty$.

Conversely, assume that $\cH$ is empty. Then $T^1M=\cR$ and $G|_{T^1M}$ is an Anosov flow. In particular $\inf_{v\in T^1M}\underline\chi(v) > 0$.
%%G: new:
The claim then follows from a shadowing argument.
\end{proof}

Following similar arguments as in the proof above, the following lemma can be shown.

\begin{lemma}\label{l:largeperiodic}
	There exist vectors that are tangent to closed geodesics and have a positive Lyapunov exponent arbitrarily close to $\overline\chi$.
\end{lemma}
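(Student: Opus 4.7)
The plan is to run a shadowing argument parallel to the proof of Proposition~\ref{lem:exist-2} just above, but now approximating the extremal exponent $\overline\chi$ from below. By Corollary~\ref{expmeas} there is an ergodic measure $\overline\mu$ with $\chi(\overline\mu)=\overline\chi$. Since $M$ has negative Euler characteristic the curvature cannot vanish identically, hence $\overline\chi\ge\chi(\widetilde m)>0$ by Lemma~\ref{alpha1lem}. Pick a Birkhoff-generic point $v$ for $\overline\mu$. Then $v$ is Lyapunov regular with $\chi(v)=\overline\chi>0$, so in particular $v\in\cR$, and along $v$ the Birkhoff averages of the continuous function $\varphi^u$ converge to $-\overline\chi$.

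Because $v\in\cR$, the geodesic $\gamma_v$ meets points where the curvature is strictly negative. Concretely there exist $\kappa>0$ and a set $U\subset T^1M$ of vectors whose footpoints have $K\le -\kappa^2$ with $\overline\mu(U)>0$; otherwise $\overline\mu$ would be supported on $\cH$, contradicting $\overline\chi>0$. By Birkhoff's ergodic theorem the orbit $\{g^t(v)\}_{t\ge 0}$ visits $U$ with positive asymptotic frequency. By Poincar\'e recurrence, select an increasing sequence of times $T_n\to\infty$ with $g^{T_n}(v)\to v$.

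For each large $n$, apply the Shadowing Lemma~\ref{lem:dosha} with parameter $\kappa$ from the previous paragraph. Insert break-times $0=T_0^{(n)}<T_1^{(n)}<\cdots<T_{J_n}^{(n)}=T_n$ spaced at least $\tau$ apart, chosen so that the footpoint of each $g^{T_j^{(n)}}(v)$ lies within $\delta$ of a point $p_j\in M$ with $K(p_j)\le -\kappa^2$; the positive-density recurrence in $U$ makes this possible. Define the periodic pseudo-orbit by $v_j=g^{T_j^{(n)}}(v)$, extended with period $J_n$; the closing condition is met since $g^{T_n}(v)$ is $\delta$-close to $v$ for large $n$. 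Lemma~\ref{lem:dosha} produces a closed geodesic $\beta_n$ of period $\tau_n$ with $|\tau_n-T_n|$ bounded, such that $d(\dot\beta_n(t),g^t(v))<\varepsilon$ for $0\le t\le T_n$.

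The measure supported on $\dot\beta_n$ is ergodic, so
\[
\chi(\dot\beta_n(0))=-\frac{1}{\tau_n}\int_0^{\tau_n}\varphi^u(\dot\beta_n(t))\,dt.
\]
By uniform continuity of $\varphi^u$ on the compact space $T^1M$ together with the shadowing estimate, this quantity differs from $-\frac{1}{T_n}\int_0^{T_n}\varphi^u(g^t(v))\,dt$ by an error that tends to $0$ as $\varepsilon\to0$ and $n\to\infty$. Since the latter tends to $\overline\chi$ by construction, we obtain $\chi(\dot\beta_n(0))\to\overline\chi$. The one delicate point in the argument is arranging the break-times of the pseudo-orbit to sit over sufficiently negatively curved footpoints, which is exactly what the positive-frequency visits of the generic orbit to $U$ deliver.
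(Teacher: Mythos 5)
Your argument is correct, and at heart it is the same shadowing-and-closing argument the paper intends (the paper gives no details for Lemma~\ref{l:largeperiodic}, saying only that it follows by arguments similar to Propositions~\ref{lem:exist-1} and~\ref{lem:exist-2}). The difference lies in how the long orbit segment with $\varphi^u$-average near $-\overline\chi$ is produced and closed: you take a Birkhoff-generic, Poincar\'e-recurrent point of the extremal measure $\overline\mu$ from Corollary~\ref{expmeas} and close the segment on itself at a recurrence time, whereas the paper's route starts from any Lyapunov regular vector whose exponent is within $\varepsilon$ of $\overline\chi$ (such vectors exist by the very definition of $\overline\chi$ as a supremum) and closes up using a connecting orbit, as in the proof of Proposition~\ref{lem:exist-1}; the final step, controlling the exponent of the closed geodesic through uniform continuity of $\varphi^u$, is the same in both versions, and you are right that this is the correct tool here, since Lemma~\ref{lem:exp} only gives an upper bound and so cannot produce exponents close to $\overline\chi$ from below. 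Two small caveats. First, in the paper Corollary~\ref{expmeas} is itself deduced from Proposition~\ref{chioverlineprop}, which is verbatim the statement you are proving; given the paper's ordering your proof is not formally circular, but to make it independent of that proposition you can replace $\overline\mu$ by an ergodic measure with exponent greater than $\overline\chi-\varepsilon$, obtained as an ergodic component of a weak* limit of empirical measures along a regular vector with exponent greater than $\overline\chi-\varepsilon$; your argument then runs unchanged. Second, since your first break time is $T^{(n)}_0=0$, the closing jump of the periodic pseudo-orbit occurs at the footpoint of $v$ itself, so you must choose $v$ in (or within $\delta$ of) the negatively curved set $U$, which is possible because $\overline\mu(U)>0$ and generic recurrent points have full measure; as written, genericity alone does not guarantee this, though the fix is one line.
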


A compact ${G}$-invariant hyperbolic topologically transitive and locally maximal set is also called a \emph{basic set}. By the flow version of the Smale spectral  decomposition theorem, given any compact invariant hyperbolic locally maximal set $\Lambda$, there is a decomposition of the set $\Omega({G}|_\Lambda)$ of nonwandering points for ${G}|_\Lambda$ into finitely many disjoint sets $\Omega({G}|_\Lambda)=\Lambda_1\cup\ldots\cup \Lambda_k$ such that  ${G}|_{\Lambda_i}$ is topologically transitive (see, for example,~\cite[Chapter 18]{KatHas:95}). Any such component $\Lambda_i$ is closed and invariant, and hence basic.

The following proposition enables us to bridge between basic sets and to include them into a basic one.

\begin{proposition}[Bridging]\label{p:p5.3}
	Suppose $\Lambda'$ and $\Lambda''$ are basic sets of topological dimension 1.  Then there is a basic set of topological dimension 1 that contains $\Lambda' \cup \Lambda''$.
\end{proposition}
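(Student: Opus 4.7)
The plan is to construct a compact $G$-invariant dimension-one hyperbolic subset $\Lambda_0\subset\cR$ containing $\Lambda'\cup\Lambda''$ together with heteroclinic orbits connecting the two basic sets in both directions; then enlarge $\Lambda_0$ via Proposition~\ref{prop:exis2} to a locally maximal hyperbolic set $\widetilde\Lambda$; and finally extract from Smale's spectral decomposition the single basic component containing both of our sets.

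For the heteroclinic construction I would pick vectors $v'\in\Lambda'$ and $v''\in\Lambda''$ with footpoints at points of strictly negative curvature (possible because basic sets contain periodic orbits and every orbit in $\cR$ meets $\{K<0\}$). By the minimality of the strong unstable horocyclic foliation $W^u$ recalled in Section~\ref{sec:prelim-g}, the leaf $W^u(v')$ is dense in $T^1M$, so for any $\delta>0$ there is $w\in W^u(v')$ with $d(w,v'')<\delta$; taking $T>0$ large enough, the definition of $W^u$ gives in addition $d(g^{-T}(w),g^{-T}(v'))<\delta$, and we arrange the footpoint of $g^{-T}(v')$ also to be at negative curvature. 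The pseudo-orbit that follows the orbit of $v'$ for $t\le -T$, the orbit of $g^{-T}(w)$ for $-T\le t\le 0$, and the orbit of $v''$ for $t\ge 0$ thus has only two jumps, each of size at most $\delta$ and located at a point of negative curvature; for $\delta$ sufficiently small, Lemma~\ref{lem:dosha} produces a geodesic $\gamma_{12}$ that $\varepsilon$-shadows this pseudo-orbit. Because $\dot\gamma_{12}(t)$ remains in a uniform $\varepsilon$-tube about the orbit of $v'$ for all $t\le -T$, the standard hyperbolic-coordinates argument (any bounded backward segment near a hyperbolic orbit must have vanishing stable component) forces $\dot\gamma_{12}(-T)$ onto the weak unstable manifold of a point on the orbit of $v'$, so the $\alpha$-limit of $\gamma_{12}$ lies in $\Lambda'$; symmetrically its $\omega$-limit lies in $\Lambda''$. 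A dual construction using $W^u(v'')$ (or equivalently minimality of $W^s$) yields a heteroclinic $\gamma_{21}$ with $\alpha$-limit in $\Lambda''$ and $\omega$-limit in $\Lambda'$.

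Set
\[
	\Lambda_0 \eqdef \Lambda'\cup\Lambda''\cup\overline{\gamma_{12}}\cup\overline{\gamma_{21}}.
\]
The closures of the heteroclinic orbits add only their $\alpha$- and $\omega$-limits, which lie in $\Lambda'\cup\Lambda''\subset\cR$, and the orbits themselves are asymptotic to geodesics in $\cR$, so $\Lambda_0\subset\cR$ is compact and $G$-invariant. Lemma~\ref{lem:hyp} gives hyperbolicity, and the countable sum theorem of dimension theory gives topological dimension one since $\Lambda_0$ is a finite union of closed dimension-one pieces. Proposition~\ref{prop:exis2}, applied in a neighborhood chosen small enough to lie inside $\cR$, then provides a compact $G$-invariant hyperbolic locally maximal set $\widetilde\Lambda\subset\cR$ of topological dimension one with $\widetilde\Lambda\supset\Lambda_0$. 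Apply the spectral decomposition recalled in the paragraph preceding the proposition to write $\Omega(G|_{\widetilde\Lambda})=\Lambda_1\cup\cdots\cup\Lambda_k$ as a disjoint union of basic sets. Each of the transitive sets $\Lambda'$ and $\Lambda''$ sits inside a single component; and the orbits $\gamma_{12},\gamma_{21}\subset\widetilde\Lambda$ supply $\varepsilon$-chains in both directions between arbitrary points of $\Lambda'$ and $\Lambda''$, so $\Lambda'\cup\Lambda''$ lies in one chain-recurrence class, which for a locally maximal hyperbolic set coincides with a single basic component $\Lambda_\ast$. The resulting $\Lambda_\ast\subset\widetilde\Lambda$ is the required basic set: it contains $\Lambda'\cup\Lambda''$ and is of topological dimension one (being closed in the dimension-one set $\widetilde\Lambda$ and containing the dimension-one set $\Lambda'$). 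The main technical obstacle is the heteroclinic construction in the first step, ensuring that the shadowing orbit is a genuine connecting orbit between the basic sets rather than only an $\varepsilon$-pseudo-connection; this is the role of the hyperbolic zero-stable-component argument combined with the careful placement of the pseudo-orbit jumps at points of negative curvature, as required by Lemma~\ref{lem:dosha}.
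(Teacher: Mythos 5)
Your proposal is correct and follows essentially the same route as the paper: heteroclinic connecting orbits in both directions produced by the shadowing lemma (Lemma~\ref{lem:dosha}), enlargement of the resulting compact invariant dimension-one set to a locally maximal hyperbolic set via Proposition~\ref{prop:exis2}, and extraction of a single piece of Smale's spectral decomposition. The only differences are cosmetic: the paper threads the connecting pseudo-orbits along a dense orbit of the flow rather than using minimality of $W^u$, and in the final step it shows directly that the connecting orbits are nonwandering for $G|_{\widetilde\Lambda}$ (by shadowing a closed pseudo-orbit, which lies in $\widetilde\Lambda$ by local maximality) and then uses connectedness of the union, instead of invoking the identification of chain-recurrence classes with basic components, whose proof is that very shadowing argument.
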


\begin{proof}
We consider the non-trivial case that the flow is not Anosov and that hence $\Lambda'$, $\Lambda''\ne\cR$. By Proposition~\ref{p:topone}, $\Lambda'$ and $\Lambda''$ have topological dimension 1.

Choose $v' \in \Lambda'$, $v''\in\Lambda''$ so that the orbits of $v'$, $v''$ are both forward and backward dense in $\Lambda'$, $\Lambda''$, respectively. Also choose $v \in \cR$ so that the orbit of $v$ is both forward and backward dense in $T^1M$. By moving $v'$ and $v''$ if necessary, we may assume that the curvature of $M$ is negative at the footpoints of these vectors. It follows from the shadowing lemma (Lemma 3.4) that we find  $w'\in \cR$ such that the orbit of $w'$ is backward asymptotic to the orbit of $v''$, forward asymptotic to the orbit of $v'$, and in between is close to a segment of the orbit of $v$. Also we can find $w'' \in \cR$ such that the orbit of $w''$ is backward asymptotic to the orbit of $v'$, forward asymptotic to the orbit of $v''$, and in between is close to a segment of the orbit of $v$.

Let $\Lambda$ be the union of $\Lambda'$, $\Lambda''$, and the orbits of $w'$ and $w''$. Then $\Lambda$ is connected compact invariant and contained in $\cR$ and has topological dimension $1$. By Proposition~\ref{prop:exis2} there is a compact invariant hyperbolic and locally maximal set $\widetilde\Lambda$ of topological dimension 1 containing $\Lambda$.

Observe that $\Lambda\subset \Omega(G|_{\widetilde\Lambda})$. Indeed, on the one hand $\Lambda'=\Omega(G|_{\Lambda'})\subset \Omega(G|_{\widetilde\Lambda})$ and analogously $\Lambda''\subset  \Omega(G|_{\widetilde\Lambda})$.
On the other hand, given $\varepsilon>0$ sufficiently small let $\delta>0$ be as in Lemma~\ref{lem:dosha}. By construction, there exist numbers $t',t'',\tau',\tau''>0$ such that $d(g^{t'}(w'),v')$,  $d(g^{-t''}(w''),v')$, $d(g^{\tau'}(w''),v'')$,  $d(g^{-\tau''}(w'),v'')<\delta/2$ and hence there exists a closed orbit that $\varepsilon$-shadows the orbit pieces $w'\mapsto g^{t'}(w')$, $g^{-t''}(w'')\mapsto w''$, $w''\mapsto g^{\tau'}(w'')$  and $g^{-\tau''}(w')\mapsto w'$. Since $\widetilde\Lambda$ is locally maximal, this orbit must lie in $\widetilde\Lambda$. This implies that $w'$ and $w''$ are non-wandering with respect to $G|_{\widetilde\Lambda}$.
By Smale's spectral decomposition $\Omega(G|_{\widetilde\Lambda})$ splits into finitely many disjoint basic sets on each of which the flow is topologically transitive. Hence the connected component of this decomposition which contains $\Lambda$ is our desired basic set.
\end{proof}

We now describe a natural way to obtain compact $G$-invariant hyperbolic subsets and, in particular, basic sets that are sufficiently large. Given a set $\Lambda$, denote by $\Per(\Lambda)$ the periodic orbits in $\Lambda$. Given $\ell\ge 1$, let
\begin{equation}\label{guy}
	\ccO_\ell \eqdef  \bigcup_{v\in\cH}B\left(v,\ell^{-1}\right)
	\quad\text{ and }\quad
%%G:	\widehat\Lambda_\ell \eqdef \bigcap_{t\in\bR} {g}^t\left(\cR_\ell\right)
	\widehat\Lambda_\ell \eqdef
	\overline{\Per \big(T^1M\setminus \ccO_\ell\big)}.
\end{equation}
%%G: Note that $\cR_\ell$ is closed and contained in $\cR$.
By Lemma~\ref{lem:hyp} any $\widehat\Lambda_\ell$ is compact ${G}$-invariant  hyperbolic, and  $\widehat\Lambda_\ell\subset \widehat\Lambda_{\ell+1}$ for all $\ell$.

\begin{lemma}\label{lem:dense}
	 $\bigcup_{\ell\ge 1}\widehat\Lambda_\ell$ is dense in $T^1M$.
\end{lemma}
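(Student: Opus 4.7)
The plan is to deduce the lemma from the fact that closed $G$-orbits contained in $\cR$ are dense in $T^1M$. Any such closed orbit is compact, hence at positive distance from the closed set $\cH$, so it lies in $T^1M\setminus\ccO_\ell$ for all sufficiently large $\ell$, and therefore contributes a point of $\Per(T^1M\setminus\ccO_\ell)\subset\widehat\Lambda_\ell$. Since $\cR$ is dense in $T^1M$ by~\cite{Bal:82}, it suffices to show that for every $v_0\in\cR$ and every $\varepsilon>0$ there is a closed $G$-orbit in $\cR$ passing within $\varepsilon$ of $v_0$.

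The main tool is the Shadowing Lemma~\ref{lem:dosha}, applied to a periodic single-vertex pseudo-orbit. Because $v_0\in\cR$, I would choose $s\in\bR$ and $\kappa>0$ with $K(\gamma_{v_0}(s))\le -2\kappa^2<0$, set $v_*\eqdef g^s(v_0)\in\cR$ and $p_*\eqdef\pi(v_*)$, and fix $\rho>0$ so that $K\le-\kappa^2$ on $B(p_*,\rho)$. For $\tau>0$ fixed and $\varepsilon'>0$ to be specified, let $\delta\in(0,\rho/2)$ be the constant provided by Lemma~\ref{lem:dosha} for the parameters $\kappa,\tau,\varepsilon'$. Next I need a near-closed orbit segment above $p_*$: since the Liouville probability measure is $G$-invariant with full support on $T^1M$, Poincar\'e recurrence gives that the non-wandering set of $G$ equals $T^1M$. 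Taking $\delta$ small enough that $B(v_*,\delta/2)\subset\cR$, the non-wandering property of $v_*$ supplies $v'_*\in B(v_*,\delta/2)\cap\cR$ and $T>\tau$ with $g^T(v'_*)\in B(v_*,\delta/2)$. Then $\pi(v'_*)\in B(p_*,\delta)$ and $d(\dot\gamma_{v'_*}(T),v'_*)<\delta$, so the periodic pseudo-orbit of period $T$ with single vertex $v'_*$ meets the hypotheses of Lemma~\ref{lem:dosha}, producing a closed geodesic $\gamma$ of period $T$ with $d(\dot\gamma(t),\dot\gamma_{v'_*}(t))<\varepsilon'$ for $0\le t\le T$.

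It then remains to verify the two properties of $\gamma$. The orbit segment $\{g^t(v'_*):0\le t\le T\}$ is a compact subset of the open set $\cR$, hence at some positive distance $\eta>0$ from $\cH$; choosing $\varepsilon'<\eta/2$ forces $\gamma$ to stay at distance $\ge\eta/2$ from $\cH$, so $\gamma\subset T^1M\setminus\ccO_\ell$ for any $\ell>2/\eta$. Shadowing also gives $d(\dot\gamma(0),v_*)<\varepsilon'+\delta/2$, and continuity of the fixed map $g^{-s}$ lets me shrink $\varepsilon'$ and $\delta$ so that $d(\dot\gamma(-s),v_0)=d(g^{-s}(\dot\gamma(0)),g^{-s}(v_*))<\varepsilon$, i.e., $\gamma$ passes within $\varepsilon$ of $v_0$. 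The principal obstacle is the requirement in Lemma~\ref{lem:dosha} that every pseudo-orbit vertex have footpoint near a point of strongly negative curvature: this prevents taking $v_0$ itself as the vertex when $\pi(v_0)$ lies in a flat region of $M$, and forces the detour through $v_*=g^s(v_0)$ with the subsequent transport of the conclusion back along $g^{-s}$.
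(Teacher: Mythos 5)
Your argument is correct in substance but takes a genuinely different route from the paper. The paper's proof is a two-line observation: by \cite{Bal:82} the vectors tangent to rank~1 closed geodesics are dense in $T^1M$, and any such closed orbit, being a compact subset of the open set $\cR$, lies at positive distance from $\cH$ and hence in $\widehat\Lambda_\ell$ for all large $\ell$. You use only the weaker fact that $\cR$ is dense and then \emph{manufacture} rank~1 closed orbits near any $v_0\in\cR$ by hand, combining Poincar\'e recurrence of the Liouville measure (to get an almost-closed orbit segment through a vector $v'_*$ near $v_*=g^s(v_0)$, whose footpoint sits in a region of negative curvature) with the Shadowing Lemma~\ref{lem:dosha} applied to the periodic single-vertex pseudo-orbit. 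This is more self-contained, in the same spirit as the shadowing constructions in Propositions~\ref{lem:exist-1} and~\ref{p:p5.3}, at the cost of being longer than the citation the paper uses; the common core of both proofs is the observation that a closed orbit inside $\cR$ automatically lands in some $\widehat\Lambda_\ell$.

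One step needs repair as written: you choose $\varepsilon'<\eta/2$, but $\eta=\dist\bigl(\{g^t(v'_*)\colon 0\le t\le T\},\cH\bigr)$ is only determined after $\delta=\delta(\kappa,\tau,\varepsilon')$, $v'_*$ and $T$ have been fixed, so the quantifiers are circular. The fix is immediate from your own setup and does not need $\eta$ at all: since only the union $\bigcup_\ell\widehat\Lambda_\ell$ matters, $\ell$ may depend on the orbit you produce. Requiring in addition $\varepsilon'<\rho/2$ (a constraint available when $\varepsilon'$ is chosen, since $\rho$ depends only on $v_0$, $s$, $\kappa$), the footpoint of $\dot\gamma(0)$ lies within $\varepsilon'+\delta/2<\rho$ of $p_*$, so the closed geodesic passes through a point where $K\le-\kappa^2<0$; it is therefore a rank~1 closed geodesic, its tangent orbit is a compact subset of $\cR$, hence at some positive distance from $\cH$, hence contained in $T^1M\setminus\ccO_\ell$ and thus in $\widehat\Lambda_\ell$ for all sufficiently large $\ell$. (Also, the shadowing orbit's period is only close to $T$, not exactly $T$, but nothing in your argument uses the exact period.) With these adjustments the proof is complete.
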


\begin{proof}
	The vectors tangent to rank 1  closed geodesics are dense in $T^1M$~\cite{Bal:82}. Any  such  vector is in $\widehat\Lambda_\ell$ for some $\ell\ge 1$.
\end{proof}

Notice that $\widehat\Lambda_\ell$ is not necessarily basic, but it is always contained in a basic set.

\begin{corollary}\label{cor:allincluded}
	For every $\ell\ge1$ there exists a basic set of topological dimension 1 containing $\widehat\Lambda_\ell$.
\end{corollary}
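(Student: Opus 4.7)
The plan is to first produce, via Proposition~\ref{prop:exis2}, a compact locally maximal hyperbolic set containing $\widehat\Lambda_\ell$, then extract a finite decomposition into basic sets via the flow version of Smale's spectral theorem, and finally glue these pieces together using the bridging Proposition~\ref{p:p5.3}.

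If $\cH=\emptyset$, then $G$ restricted to $T^1M=\cR$ is an Anosov flow, and $T^1M$ itself is a basic set containing $\widehat\Lambda_\ell$, so the conclusion is trivial. Hence assume $\cH\ne\emptyset$. By construction $\widehat\Lambda_\ell\subset T^1M\setminus\ccO_\ell\subset\cR$, so Lemma~\ref{lem:hyp} shows that the compact $G$-invariant set $\widehat\Lambda_\ell$ is hyperbolic. Since $\cH\ne\emptyset$, the open dense set $\cR$ fails to be closed in $T^1M$ and hence is not compact, so $\widehat\Lambda_\ell\ne\cR$, and Proposition~\ref{p:topone} gives that $\widehat\Lambda_\ell$ has topological dimension $1$. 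Proposition~\ref{prop:exis2} then provides a compact $G$-invariant hyperbolic locally maximal set $\widetilde\Lambda$ of topological dimension $1$ with $\widehat\Lambda_\ell\subset\widetilde\Lambda$.

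Next apply Smale's spectral decomposition to $\widetilde\Lambda$ to write the non-wandering set as a finite disjoint union of basic sets $\Omega(G|_{\widetilde\Lambda})=B_1\sqcup\cdots\sqcup B_k$. Every periodic orbit of $\widehat\Lambda_\ell$ is non-wandering in $\widetilde\Lambda$ and hence lies in some $B_j$; since each $B_j$ is closed and $\widehat\Lambda_\ell$ is the closure of the periodic orbits inside $T^1M\setminus\ccO_\ell$, this yields $\widehat\Lambda_\ell\subset B_1\cup\cdots\cup B_k$. Each $B_j$ is a basic set contained in the one-dimensional set $\widetilde\Lambda$, is flow-invariant, and is non-trivial, hence has topological dimension $1$.

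Finally, iterate the bridging Proposition~\ref{p:p5.3}: bridge $B_1$ and $B_2$ to obtain a basic set of topological dimension $1$ containing $B_1\cup B_2$, then bridge the result with $B_3$, and so on. After $k-1$ steps, we obtain a single basic set of topological dimension $1$ that contains $B_1\cup\cdots\cup B_k$, and therefore $\widehat\Lambda_\ell$. The one place where care is needed is the inclusion $\widehat\Lambda_\ell\subset\Omega(G|_{\widetilde\Lambda})$ used to distribute $\widehat\Lambda_\ell$ among the $B_j$; this is immediate once one passes from individual periodic points (which are automatically non-wandering) to their closure, using that the $B_j$ are closed.
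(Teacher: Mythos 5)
Your proof is correct and follows essentially the same route as the paper's: pass to a locally maximal hyperbolic set via Propositions~\ref{p:topone} and~\ref{prop:exis2}, note that $\widehat\Lambda_\ell$ lies in the (closed) non-wandering set because its periodic orbits are dense in it, apply spectral decomposition, and bridge the pieces with Proposition~\ref{p:p5.3}. The only difference is your explicit (and harmless) treatment of the trivial case $\cH=\emptyset$, which the paper leaves implicit.
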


\begin{proof}
	By construction, $\widehat\Lambda_\ell\subset\cR$, $\widehat\Lambda_\ell\ne\cR$. By Propositions~\ref{p:topone} and~\ref{prop:exis2}, hence there exists a compact $G$-invariant hyperbolic locally maximal set $\Lambda$  of topological dimension 1 containing $\widehat\Lambda_\ell$.
	Observe that $\Omega(G|_\Lambda)$ contains all periodic points in $\Lambda$ and thus all periodic points in $\widehat\Lambda_\ell$. As $\Omega(G|_\Lambda)$ is closed it hence contains $\widehat\Lambda_\ell$.
	By the spectral decomposition theorem, $\Omega(G|_\Lambda)$ splits into finitely many basic sets $\Lambda_1\cup\ldots\cup\Lambda_k$.	
	We then can apply Proposition~\ref{p:p5.3} repeated times to obtain a basic set containing $\Lambda_1\cup\ldots\cup\Lambda_k$ and hence $\widehat\Lambda_\ell$.
\end{proof}

Thus, finally we can prove Theorem~\ref{cor:dense} by giving a recipe to construct large basic sets with prescribed properties.

%%G: some changes in the proof using now corollary~\ref{cor:allincluded}.
\begin{proof}[Proof of Theorem~\ref{cor:dense}]
     Let $\{\widehat\Lambda_\ell\}_{\ell\ge1}$ be the sequence of sets constructed in~\eqref{guy}. By Corollary~\ref{cor:allincluded}, for each $\ell$ there is a basic set $\widetilde\Lambda_\ell\supset\widehat\Lambda_\ell$.
     By Proposition~\ref{lem:exist-2} there exists a sequence $\{v_\ell\}_{\ell\ge1}$ of vectors tangent to closed geodesics such that $\chi(v_\ell)\to\underline\chi$. We denote the corresponding periodic orbits by $\cO(v_\ell)$. Analogously, by Lemma~\ref{l:largeperiodic} there exists a sequence $\{w_\ell\}_{\ell\ge1}$ of vectors tangent to closed geodesics such that $\chi(w_\ell)\to\overline\chi$. We denote the corresponding periodic orbits by $\cO(w_\ell)$. Observe that any hyperbolic periodic orbit is locally maximal and hence basic.

    We start by applying Proposition~\ref{p:p5.3} twice to obtain a basic set $\Lambda_1$ containing $\widetilde\Lambda_1\cup\cO(v_1)\cup\cO(w_1)$. Then, again applying Proposition~\ref{p:p5.3} repeated times, for every $\ell\ge2$ there exists a basic set $\Lambda_\ell$ of topological dimension 1 such that
    \[
    \Lambda_\ell\supset
    \Lambda_{\ell-1}\cup
    \widetilde\Lambda_\ell\cup\cO(v_\ell)\cup\cO(w_\ell).
    \]

    Recall our  choice of vectors $v_k$ and $w_k$ and observe that \[\overline\chi(\Lambda_k)\ge \chi(v_k),\chi(w_k)\ge\underline\chi(\Lambda_k).\] This immediately implies~\eqref{e:convergenceexpo}.

    By Lemma~\ref{lem:dense}, the so constructed sequence of basic sets is dense in $T^1M$.
    Finally, given a basic set $\Lambda\subset\cR$, $\Lambda\ne\cR$, then $\Lambda\subset \widehat\Lambda_\ell\subset\widetilde\Lambda_\ell\subset\Lambda_\ell$ for some $\ell\ge1$.
    This proves the theorem.
\end{proof}

%%G: moved down:
%Note that Lemma~\ref{lem:dense} guarantees that, in particular, the above constructed sets $\Lambda_\ell$ eventually contain any basic set as is provided in Proposition~\ref{lem:horse}.

%--------------------------------------------------------------------------------------------------
\section{Pressure functions and basic sets}\label{s:6}
%-------------------------------------------------------------------------------------------------

%--------------------------------------------------------------------------------------------------
\subsection{Hyperbolic pressure}
%-------------------------------------------------------------------------------------------------

We now introduce three types of pressure of the flow (see~\eqref{d:hypp},~\eqref{d:hypvarp}, and~\eqref{d:hypvarpbas}) and show their equivalence to the topological pressure $P(\varphi)$ introduced above. Given a continuous potential $\varphi\colon T^1M\to\bR$, let
\begin{equation}\label{d:hypp}
P_{\rm hyp}(\varphi)\eqdef \sup_{\Lambda\subset  T^1M} P(\varphi,\Lambda),
\end{equation}
with the supremum taken over all compact ${G}$-invariant hyperbolic sets $\Lambda\subset  T^1M$.
Let us also define
\begin{equation}\label{d:hypvarp}
	P_{\rm hyp\,var}(\varphi)\eqdef
	\sup_{\nu\in\cM_{\rm e}\text{ hyperbolic }}
		P(\varphi,\nu)
\end{equation}
with the supremum taken over all ergodic hyperbolic measures.

We call a measure $\mu\in \cM$ \emph{basic} if it is supported on a basic set. These measures will play a fundamental role in proving Theorem~\ref{mainthm}.
Obviously, every basic measure is hyperbolic. However, a hyperbolic measure may not be basic. For example, the ergodic component $\widetilde m$ of the Liouville measure is hyperbolic, but supported on $T^1M$ and hence basic if and only if the flow is Anosov.
Let
\begin{equation}\label{d:hypvarpbas}
	P_{\rm bas\,var}(\varphi)\eqdef
	\sup_{\nu\in\cM_{\rm e}\text{ basic }}
		P(\varphi,\nu)
\end{equation}
with the supremum taken over all ergodic basic measures.

Let us now describe a way of obtaining basic sets with certain prescribed properties.
The following proposition enables us, in particular, to approximate any hyperbolic ergodic %%G: positive entropy -- deleted
measure by a basic one with comparable entropy, pressure, and Lyapunov exponent.

\begin{proposition}[Katok's horseshoes]\label{lem:horse}
	Let $\mu\in\cM_{\rm e}$.
	Let $\varphi\colon T^1M\to \bR$ be continuous. Then for every $\varepsilon>0$ there exists a basic set $\Lambda\subset\cR$ of topological dimension $1$ such that \\[0.2cm]
 	(i) $\,\,h(\Lambda)\ge h(\mu)-\varepsilon$, \\[0.1cm]
	(ii) $\displaystyle P(\varphi,{\Lambda})
		\ge P(\varphi,\mu)
		-\varepsilon$.\\[0.1cm]	
	(iii) $\,\,\chi(\mu)-\varepsilon \le\chi(v)\le\chi(\mu)+\varepsilon\,\,$ for all Lyapunov regular $v\in \Lambda$.
\end{proposition}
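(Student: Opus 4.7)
My plan is to split on whether $\chi(\mu)>0$.

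\emph{The hyperbolic case.} Suppose $\chi(\mu)>0$; then $\mu$ is hyperbolic and by Oseledec it is concentrated on $\cR$. I would adapt Katok's horseshoe construction \cite{Kat:80} (see also \cite[Supplement S.5]{KatHas:95}) to our flow. For small $\rho>0$ I fix a Pesin block $\Gamma_\rho\subset\cR$ with $\mu(\Gamma_\rho)>1-\rho$ on which the hyperbolicity constants, stable/unstable manifold sizes, and the angle between $F^u$ and $F^s$ are uniform; by continuity of $K$ and shrinking $\rho$, I can also arrange $K(\pi(v))\le -\kappa^2$ for some $\kappa>0$ and all $v\in\Gamma_\rho$. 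Birkhoff's theorem applied to $\mathbf{1}_{\Gamma_\rho}$, $\varphi$ and $\varphi^u$, together with the Shannon--McMillan--Breiman theorem for a generator of $g^1$, produce a set $A\subset\Gamma_\rho$ of $\mu$-measure close to $1$ such that for each $v\in A$ and all sufficiently large $n$: (a)~the orbit $\{g^t v\}_{0\le t\le n}$ returns to $\Gamma_\rho$ with asymptotic frequency $\ge 1-\varepsilon$; (b)~$\frac1n\int_0^n\varphi\circ g^t(v)\,dt$ and $\frac1n\int_0^n\varphi^u\circ g^t(v)\,dt$ are within $\varepsilon$ of $\int\varphi\,d\mu$ and of $-\chi(\mu)$ respectively; (c)~a maximal $(n,\delta)$-separated subset of $A$ has cardinality at least $e^{n(h(\mu)-\varepsilon)}$.

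\emph{Closing up the horseshoe.} Next, using (a) together with compactness of $\Gamma_\rho$ and a pigeonhole, for each $v$ in the separated set I would select a return time $T_v\in[n,n+1]$ with $g^{T_v}(v)$ close to $v$. The Shadowing Lemma~\ref{lem:dosha}, with curvature parameter $\kappa$ supplied by $\Gamma_\rho$, then closes $v\mapsto g^{T_v}(v)$ into a periodic geodesic $\beta_v$; arbitrary concatenations of such segments are also shadowable, yielding a compact hyperbolic invariant set $\Lambda_0\subset\cR$ whose periodic orbits remain pairwise $(n,\delta/2)$-separated. By Propositions~\ref{p:topone} and~\ref{prop:exis2}, $\Lambda_0$ sits inside a locally maximal hyperbolic set $\widetilde\Lambda\subset\cR$ of topological dimension $1$, and Proposition~\ref{p:p5.3} further absorbs $\widetilde\Lambda$ into a basic set $\Lambda\subset\cR$. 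Counting the $(n,\delta)$-separated closed orbits yields $h(\Lambda)\ge h(\mu)-\varepsilon$, while the shadowing control combined with (b) gives $|\frac1{\tau_\beta}\int_0^{\tau_\beta}\varphi(\dot\beta(t))\,dt-\int\varphi\,d\mu|\le\varepsilon$ and $|\chi(\dot\beta(0))-\chi(\mu)|\le\varepsilon$ for every closed orbit $\beta$ constructed in $\Lambda_0$. By uniform hyperbolicity on $\Lambda$ and upper semicontinuity of the entropy map, these closed-orbit estimates propagate to every invariant measure on $\Lambda$ and hence to every Lyapunov regular $v\in\Lambda$, giving (ii) and (iii).

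\emph{The non-hyperbolic case.} If $\chi(\mu)=0$, the Ruelle inequality forces $h(\mu)=0$. A short calculation using the Riccati equation $u'+u^2+K=0$: integrating $u^2=-u'-K$ along a $\mu$-generic orbit and dividing by $T$, one obtains $\int u^2\,d\mu=-\int K\circ\pi\,d\mu$, while $\chi(\mu)=\int u\,d\mu=0$ together with $u\ge 0$ forces $\int u^2\,d\mu=0$, so $\int K\circ\pi\,d\mu=0$. Since $K\le 0$ is continuous and $\mu$ is $G$-invariant, this implies $\mathrm{supp}(\mu)\subset\cH$. I would then pick a $\mu$-generic $v^*\in\cH$ and a fixed vector whose footpoint $p_0$ has curvature $-\kappa^2<0$; following the construction of Proposition~\ref{lem:exist-2}, long orbit segments of $v^*$ can be closed to periodic geodesics $\beta_T$ by a short transit through $p_0$. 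The resulting closed orbit sits in $\cR$, is itself a basic set of topological dimension $1$ with zero entropy (so~(i) holds trivially), has $\chi(\dot\beta_T(0))\to 0$ by Lemma~\ref{lem:exp} (using $\frac1{\tau_T}\int_0^{\tau_T}K(\beta_T(t))\,dt\to 0$), and has $\frac1{\tau_T}\int\varphi\circ\dot\beta_T\,dt\to\int\varphi\,d\mu$ by Birkhoff and continuity of $\varphi$, giving (ii) and (iii).

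\emph{Main obstacle.} The hardest step is (i) in the hyperbolic case: producing the exponential number $e^{n(h(\mu)-\varepsilon)}$ of distinct closed orbits without a lossy collapse under shadowing, and then verifying that the uniform exponent bound on the horseshoe survives the bridging step of Proposition~\ref{p:p5.3}. The latter requires arranging the connecting orbits produced by bridging to be short relative to the period of a typical closed orbit in $\Lambda$, so that they contribute a negligible fraction of time to any invariant measure supported on $\Lambda$ and hence do not spoil the exponent or pressure estimates.
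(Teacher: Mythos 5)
The paper does not actually write out a proof of this proposition: it invokes the flow version of Katok's horseshoe theorem (\cite[Supplement S.5]{KatHas:95}, \cite{Kat:82}) via adapted Lyapunov metrics, and disposes of the case $h(\mu)=0$ by taking a periodic orbit (\cite{Kat:84}). Your hyperbolic case follows that standard outline, but your final step contains a genuine gap. After passing to the locally maximal hull $\widetilde\Lambda$ you invoke the bridging Proposition~\ref{p:p5.3} to force transitivity; however, that construction routes the connecting orbits along a segment of an orbit that is dense in $T^1M$, and the basic set it produces contains closed orbits (hence ergodic measures and Lyapunov regular vectors) that spend a fraction of their period, bounded away from $0$, shadowing these connectors, whose $\varphi^u$-averages have nothing to do with $\chi(\mu)$. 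So (iii) can fail for the set you actually build. Your proposed remedy --- making the connectors short relative to the period of a ``typical'' closed orbit --- does not help, because the offending measures are precisely those concentrated on orbits that recirculate through the connectors; nor does ``upper semicontinuity of the entropy map'' transfer exponent bounds from a family of closed orbits to all invariant measures or regular points. The repair is to avoid bridging altogether: your $\Lambda_0$ (all orbits shadowing arbitrary concatenations of segments that begin and end in one small ball) is already transitive, so apply Proposition~\ref{prop:exis2} inside a sufficiently small neighborhood of $\Lambda_0$ and take the piece of the spectral decomposition of $\Omega(G|_{\widetilde\Lambda})$ that contains $\Lambda_0$; smallness of the neighborhood together with continuity of $\varphi$ and $\varphi^u$ (a weak* compactness argument) preserves (ii) and (iii), and (i) holds because this piece contains $\Lambda_0$.

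The non-hyperbolic case has a second gap. Your closing of long segments of the orbit of a generic $v^*\in\cH$ is not licensed by Lemma~\ref{lem:dosha}, which requires the junction vectors to have footpoints where the curvature is at most $-\kappa^2$; along the orbit of $v^*$ the curvature vanishes identically, so no junction can be placed on that orbit. The constructions in Propositions~\ref{lem:exist-1} and~\ref{lem:exist-2} avoid this by shadowing nearby geodesics and placing the junctions at their (uncontrolled, possibly very late) exit times into the negative-curvature region; this controls the time average of the curvature, hence the exponent via Lemma~\ref{lem:exp}, but it gives no control on the Birkhoff average of an arbitrary continuous $\varphi$ over the closed orbit, so your claim (ii) in this case is not established. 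This is exactly the difficulty the paper sidesteps: it only ever applies the proposition to hyperbolic measures, assumes $\varphi|_\cH$ constant in Theorem~\ref{theorem:3}, and treats $h(\mu)=0$ by a periodic orbit through Katok's approximation theorem for hyperbolic measures. A smaller point in the same spirit: shrinking the Pesin block does not by itself force $K(\pi(v))\le-\kappa^2$ on it; you need the separate observation that when $\chi(\mu)>0$ the set of vectors whose footpoints have negative curvature has positive $\mu$-measure (otherwise $\mu$ would be supported in $\cH$), and you must intersect the block with that set so that the junctions of your pseudo-orbits satisfy the hypothesis of Lemma~\ref{lem:dosha}.
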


%%G: added reference to \cite[Theorem 8]{Kat:84} and moved here:
Proposition~\ref{lem:horse} is a flow version of a horseshoe approximation by Katok (see~\cite[Supplement S.5]{KatHas:95} and \cite[Theorem 4.1]{Kat:82} for a related result and indications of modifications that are needed in the case of a flow).
A detailed proof can be given by means of nowadays standard methods using adapted Lyapunov metrics
(see for example~\cite[Theorem 2.3]{New:88} for an exposition on systems with vanishing Lyapunov exponents). Note that if $h(\mu)=0$ then the basic set can simply chosen to be a periodic orbit with corresponding properties (see~\cite[Theorem 8]{Kat:84}).
% who in turn relies on  Ma\~n\'e~\cite{Man:83}).

Note that Lemma~\ref{lem:dense} guarantees that, in particular, the above constructed sets $\Lambda_\ell$ eventually contain any basic set as is provided in Proposition~\ref{lem:horse}.

\begin{theorem}\label{theorem:3}
	Let $\varphi\colon T^1M\to\bR$ be a continuous potential.
	If $\cH\ne\emptyset$ then assume that $\varphi|_\cH$ is constant. Then
	\[
		P_{\rm hyp}(\varphi)
		= P_{\rm bas\,var}(\varphi)
		= P_{\rm hyp\,var}(\varphi)
		= \sup_{\ell\ge 1}P(\varphi,\Lambda_\ell)
		= \lim_{\ell\to\infty}P(\varphi,\Lambda_\ell)
        		= P(\varphi).
        \]
\end{theorem}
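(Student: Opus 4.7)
The easy inequalities $P_{\rm bas\,var}(\varphi) \le P_{\rm hyp\,var}(\varphi) \le P_{\rm hyp}(\varphi) \le P(\varphi)$ follow immediately from the definitions, monotonicity of pressure, and the variational principle on compact invariant sets, so the only substantive inequality is $P(\varphi) \le P_{\rm bas\,var}(\varphi)$. The sequence $\{P(\varphi, \Lambda_\ell)\}$ is nondecreasing since $\Lambda_\ell \subset \Lambda_{\ell+1}$, so $\sup_\ell = \lim_\ell$; and since each $\Lambda_\ell$ is basic by Theorem~\ref{cor:dense}, $\sup_\ell P(\varphi, \Lambda_\ell) \le P_{\rm hyp}(\varphi)$. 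Conversely, any basic set $\Lambda \ne \cR$ is contained in some $\Lambda_\ell$ by Theorem~\ref{cor:dense}, so every basic measure $\nu$ supported on such a $\Lambda$ satisfies $P(\varphi, \nu) \le P(\varphi, \Lambda_\ell)$, which yields $P_{\rm bas\,var}(\varphi) \le \sup_\ell P(\varphi, \Lambda_\ell)$; the Anosov case $\cH = \emptyset$ is trivial since then $T^1M$ is itself basic.

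For the substantive inequality, I would fix $\varepsilon > 0$ and, using the variational principle~\eqref{varprinc}, pick $\mu \in \cM_{\rm e}$ with $P(\varphi, \mu) > P(\varphi) - \varepsilon$. If $\chi(\mu) > 0$, the plan is to apply Katok's horseshoe theorem (Proposition~\ref{lem:horse}) to produce a basic set $\Lambda \subset \cR$ with $P(\varphi, \Lambda) \ge P(\varphi, \mu) - \varepsilon$. Upper semicontinuity of the entropy map on $\cM(\Lambda)$ guarantees an ergodic equilibrium state for $\varphi|_\Lambda$, which is a basic measure witnessing $P_{\rm bas\,var}(\varphi) \ge P(\varphi, \mu) - \varepsilon > P(\varphi) - 2\varepsilon$.

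The delicate case is $\chi(\mu) = 0$, which forces $\cH \ne \emptyset$. The key structural fact I would establish is $\mathrm{supp}(\mu) \subset \cH$: since $\int \varphi^u\,d\mu = 0$ with $\varphi^u \le 0$ continuous and $\mu$ invariant ergodic, $\varphi^u$ vanishes along $\mu$-almost every full orbit, and unwinding this via the Jacobi-field computation in the proof of Lemma~\ref{nonneglemma} forces $j'(t) \equiv 0$ along the unstable Jacobi field, and hence $K \equiv 0$ along the orbit. The Ruelle inequality gives $h(\mu) = 0$, and the constancy hypothesis $\varphi|_\cH \equiv c$ yields $P(\varphi, \mu) = c$. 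I then invoke Proposition~\ref{lem:exist-2}, refined by letting the curvature threshold $a \downarrow 0$ and choosing the approximating geodesics $\gamma_k$ to lie in ever smaller tubes around an orbit in $\cH$, to obtain closed geodesics $\beta_k \subset \cR$ whose periodic measures $\nu_k$ concentrate on every prescribed neighborhood of $\cH$. Each $\nu_k$ is basic (the hyperbolic closed orbit it supports is contained in a basic set by Proposition~\ref{prop:exis2}) with $h(\nu_k) = 0$, and uniform continuity of $\varphi$ together with $\varphi|_\cH \equiv c$ gives $\int \varphi\,d\nu_k \to c$; hence $P(\varphi, \nu_k) \to c = P(\varphi, \mu)$ and $P_{\rm bas\,var}(\varphi) \ge P(\varphi) - 2\varepsilon$, so letting $\varepsilon \to 0$ concludes the argument. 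The hardest step will be this last one: pointwise smallness of curvature along $\beta_k$ does not by itself place $\beta_k$ near $\cH$ in $T^1M$, so genuine weak-$\ast$ concentration of $\nu_k$ on $\cH$ requires a careful refinement of the shadowing construction; the role of the constancy hypothesis $\varphi|_\cH \equiv c$ is precisely to let us bound $\int \varphi\,d\nu_k$ from below by $c - \varepsilon$ without having to match the given $\mu$ in weak-$\ast$.
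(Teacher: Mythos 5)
Your overall architecture is the same as the paper's: the trivial inequalities reduce everything to showing $P(\varphi)\le P_{\rm bas\,var}(\varphi)$, Katok's horseshoe (Proposition~\ref{lem:horse}) handles measures with positive exponent, closed orbits produced by shadowing handle the degenerate case, and Theorem~\ref{cor:dense} gives the $\sup_\ell P(\varphi,\Lambda_\ell)$ statements. One small inaccuracy: $P_{\rm hyp\,var}(\varphi)\le P_{\rm hyp}(\varphi)$ is \emph{not} immediate from the definitions, since the support of a hyperbolic ergodic measure need not be a hyperbolic set (e.g.\ $\widetilde m$ is hyperbolic with support $T^1M$ when $\cH\ne\emptyset$); the paper instead orders the easy chain as $P_{\rm hyp}\le P_{\rm bas\,var}\le P_{\rm hyp\,var}\le P(\varphi)$, using Proposition~\ref{lem:horse} for the first link. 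This does not hurt you, because $P_{\rm hyp\,var}\le P(\varphi)$ is trivial and your sandwich closes once $P(\varphi)\le P_{\rm bas\,var}$ is known.

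The genuine gap is exactly the step you flag in the case $\chi(\mu)=0$: you need periodic (hence basic) measures $\nu_k$ with $\int\varphi\,d\nu_k\to c=\varphi|_\cH$, and you propose to get weak-$*$ concentration near $\cH$ by rerunning the shadowing construction with curvature threshold $a\downarrow 0$ and geodesics confined to small tubes around an orbit in $\cH$. As written this does not work: in the construction of Proposition~\ref{lem:exist-2} the closed orbit must leave any small tube around the flat orbit in order to reach the region of definitely negative curvature where the closing-up takes place, and between the moment it leaves the tube and the times $t_k^{\pm}$ there is no control whatsoever on where it travels --- near-zero curvature at the footpoint does not mean proximity to $\cH$ in $T^1M$, which is precisely the obstruction you name and then leave unresolved. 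The point you are missing is that no refinement of the construction is needed: Proposition~\ref{lem:exist-2} together with Lemma~\ref{lem:exp} already gives $\chi(\nu_k)\to 0$, and since $\chi(\nu_k)=-\int\varphi^u\,d\nu_k$ with $\varphi^u$ continuous, any weak-$*$ limit $\nu$ of the $\nu_k$ satisfies $\chi(\nu)=0$; the same rigidity argument you already carried out for $\mu$ (vanishing of $\varphi^u$ along a.e.\ orbit forces $K\equiv 0$ along the geodesic, via Lemma~\ref{nonneglemma}) then shows $\nu$ is supported on $\cH$, so $\int\varphi\,d\nu_k\to c$ automatically, and by the portmanteau inequality the orbits indeed spend most of their time in any prescribed neighborhood of $\cH$. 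This soft compactness argument is what justifies the paper's Case 1 (``a closed orbit that most of its time stays close to $\cH$''), and with it your proof closes; without it, the hardest step of your argument remains an unproved assertion whose proposed proof route is doubtful.
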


\begin{proof}%[Proof of Theorem~\ref{theorem:3}]
If $\cH=\emptyset$ then $T^1M$ is hyperbolic for $G$ and the claimed property immediately follows.
Let us hence assume that $\cH\ne\emptyset$. Without loss of generality we can assume that $\varphi=0$ on $\cH$. Indeed, otherwise let $c\eqdef \varphi|_\cH$ and replace $\varphi$ by $\varphi-c$ and observe that any of the above pressure functions satisfies $P(\varphi-c)=P(\varphi)-c$.

Given any hyperbolic set $\Lambda\subset T^1M$, by the variational principle applied to $G|_\Lambda$ and Proposition~\ref{lem:horse} we obtain
\[
	P(\varphi,\Lambda)
	= \sup_{\nu\in\cM_{\rm e}(\Lambda)}P(\varphi,\nu)
	\le  \sup_{\nu\in\cM_{\rm e}(\Lambda)\text{ basic}}P(\varphi,\nu)
	\le  P_{\rm bas\, var}(\varphi).
\]
With this property the following inequalities
\[
	P_{\rm hyp}(\varphi)
		\le P_{\rm bas\,var}(\varphi)
		\le P_{\rm hyp\,var}(\varphi)
		\le P(\varphi)\] are easy to verify.
%Since $P _{{G}|\Lambda}(\varphi) \leq P(\varphi)$ for each $\Lambda\subset T^1M$, we clearly have $P_{\rm hyp}(\varphi) \leq P(\varphi)$.

Let us prove the opposite inequality $P(\varphi)\le P_{\rm hyp}(\varphi)$.
The hypothesis that $\varphi|_\cH=0$ ensures that $P(\varphi) \geq 0$.
Indeed, if $\mu$ is a measure supported on $\cH$, we have $\int \varphi\,d\mu = 0$ and $h(g^1,\mu) = 0$ by the Ruelle inequality, and then the variational principle ensures that $P(\varphi) \geq 0$. We distinguish two cases.\\[0.1cm]
\textbf{Case 1:} If $P(\varphi) = 0$, use the method of the proof of Lemma~\ref{lem:exist-1} to find a closed orbit $\cO$ in $\cR$ that most of its time stays close to $\cH$. The invariant measure $\nu$ that is supported on such an orbit is hyperbolic and satisfies
\[
	\int\varphi\,d\nu\ge -\varepsilon
\]	
for some small $\varepsilon > 0$. Hence $P_{\rm hyp}(\varphi)\ge P(\varphi,\cO)\ge -\varepsilon$. As $\varepsilon$ can be made arbitrarily small, we obtain $P_{\rm hyp}(\varphi)\ge 0=P(\varphi)$.
\\[0.1cm]
\textbf{Case 2:}
Now suppose that $P(\varphi) > 0$. Let $\mu$ be an equilibrium state for $\varphi$. Without loss of generality we can assume that $\mu$ is ergodic. Indeed, every ergodic component is also an equilibrium state. 
By Proposition~\ref{lem:horse} (iii), for every $\varepsilon>0$ there is a basic set $\Lambda\subset\cR$ such that
\[
	P_{\rm hyp}(\varphi)
	\ge P(\varphi,{\Lambda})
	\ge P(\varphi,\mu) %h(\mu)+\int\varphi\,d\mu
		-\varepsilon=P(\varphi)-\varepsilon.
\]	
As $\varepsilon$ can be made arbitrarily small, we obtain $P_{\rm hyp}(\varphi)\ge  P(\varphi)$.

Since the increasing family of hyperbolic sets $\Lambda_\ell$ eventually contains any hyperbolic invariant set, we have
\[
	P(\varphi) \ge
 	\sup_{\ell \geq 0} P(\varphi,{\Lambda_\ell})
 	= \lim_{\ell \to \infty}P(\varphi,{\Lambda_\ell}).
\]
Together with Proposition~\ref{lem:horse} we finally conclude $\sup_\ell P(\varphi,{\Lambda_\ell})\ge P_{\rm hyp}(\varphi)$.

This proves the theorem.
\end{proof}

%--------------------------------------------------------------------------------------------------
\subsection{Approximating by the pressure on basic sets}
%-------------------------------------------------------------------------------------------------
We now come to a simple but  crucial proposition. Let $\Lambda_\ell$ be the sequence of basic sets in Theorem~\ref{cor:dense}. It will be convenient to write $\cP_\ell$, $\cD_\ell$, $\cE_\ell$, and $\cF_{\ell,\alpha}$ instead of $\cP_{\Lambda_\ell}$, $\cD_{\Lambda_\ell}$, $\cE_{\Lambda_\ell}$, and $\cF_{{\Lambda_\ell,\alpha}}$.

\begin{proposition}\label{unifconvprop}
	$\cP_\ell$ converges to $\cP$ uniformly on compact intervals.
\end{proposition}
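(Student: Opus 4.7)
The plan is to reduce the proposition to a pointwise convergence statement, which is already an immediate consequence of Theorem~\ref{theorem:3}, and then upgrade pointwise convergence to uniform convergence on compact intervals via a monotonicity/continuity argument.

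\textbf{Step 1 (Pointwise convergence).} Fix $q \in \bR$ and apply Theorem~\ref{theorem:3} to the continuous potential $\varphi = q\varphi^u$. The hypothesis that $\varphi|_\cH$ be constant (needed when $\cH \ne \emptyset$) is satisfied: the function $\varphi^u$ vanishes identically on $\cH$, because unstable Jacobi fields are covariantly constant along geodesics tangent to vectors in $\cH$; hence $q\varphi^u|_\cH \equiv 0$. Theorem~\ref{theorem:3} therefore gives
\[
\cP_\ell(q) \;=\; P(q\varphi^u,\Lambda_\ell) \;\xrightarrow[\ell\to\infty]{}\; P(q\varphi^u) \;=\; \cP(q).
\]

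\textbf{Step 2 (Monotonicity and continuity).} Since $\Lambda_\ell \subset \Lambda_{\ell+1} \subset T^1M$, the monotonicity of the pressure with respect to the underlying set yields $\cP_\ell(q) \le \cP_{\ell+1}(q) \le \cP(q)$ for every $q \in \bR$, so $\{\cP_\ell\}$ is a monotone nondecreasing sequence. Each $\cP_\ell$ is real analytic by Proposition~\ref{convpropforbasic}, while $\cP$ is convex on $\bR$ by Proposition~\ref{convprop} and hence continuous.

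\textbf{Step 3 (Uniform convergence).} Given any compact interval $[a,b] \subset \bR$, Dini's theorem applies to the monotone sequence of continuous functions $\cP_\ell$ converging pointwise to the continuous limit $\cP$ on the compact set $[a,b]$, and the convergence is therefore uniform on $[a,b]$.

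The essential content sits in Step~1 and is entirely inherited from Theorem~\ref{theorem:3}; the main thing to verify is that the hypothesis of that theorem is met for every $q \in \bR$, and this follows at once from $\varphi^u|_\cH = 0$. Neither step presents a substantive obstacle, which is consistent with the paper's framing of this as a simple but crucial proposition that packages the pressure-approximation theorem into a form convenient for the Legendre-transform arguments that yield Theorem~\ref{mainthm}.
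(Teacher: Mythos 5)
Your proof is correct and follows essentially the same route as the paper: pointwise convergence from Theorem~\ref{theorem:3} (applied to $q\varphi^u$, which is legitimate since $\varphi^u|_\cH\equiv 0$), monotonicity from the nesting $\Lambda_\ell\subset\Lambda_{\ell+1}$, continuity of the limit from convexity, and then Dini's theorem. Your explicit verification of the hypothesis of Theorem~\ref{theorem:3} is a welcome detail the paper leaves implicit.
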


\begin{proof}The functions $\cP_\ell$ and $\cP$ are continuous
because they are convex. The sequence $\cP_{\Lambda_\ell}$ is monotone
because the sets $\Lambda_\ell$ are nested. Theorem~\ref{theorem:3} shows
that $\cP_\ell$ converges to $\cP$ pointwise.
Now apply Dini's theorem.
\end{proof}

It follows from Theorem~\ref{cor:dense} that for each $\alpha\in(\underline\chi,\overline\chi)$ the linear function $\cF_{\ell,\alpha}$, whose graph is the supporting line to $\cP_\ell$ with slope $-\alpha$,  is defined for all large enough~$\ell$.

\begin{proposition} \label{lineconv}
	For each $\alpha \in(\underline\chi,\overline\chi)$ we have
	\[
		\cF_\alpha = \lim_{\ell \to \infty} \cF_{\ell,\alpha}.
	\]	
\end{proposition}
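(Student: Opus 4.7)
The plan is to reduce the convergence of supporting lines to convergence of their vertical intercepts. Since $\cF_\alpha$ and $\cF_{\ell,\alpha}$ are both affine with slope $-\alpha$, we have $\cF_\alpha(q)=\cE(\alpha)-q\alpha$ and $\cF_{\ell,\alpha}(q)=\cE_\ell(\alpha)-q\alpha$, so it suffices to prove $\cE_\ell(\alpha)\to\cE(\alpha)$; this is automatically uniform in $q$ on compacts because the difference $\cF_{\ell,\alpha}-\cF_\alpha$ is constant. The inclusion $\Lambda_\ell\subset T^1M$ together with monotonicity of topological pressure give $\cP_\ell\le\cP$, hence $\cE_\ell(\alpha)\le\cE(\alpha)$, so only the reverse inequality in the limit requires work.

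The heart of the argument is to localize the infimum defining $\cE_\ell(\alpha)$ onto a compact interval common to all sufficiently large $\ell$. Fix $\beta\in(\underline\chi,\alpha)$ and $\gamma\in(\alpha,\overline\chi)$. Theorem~\ref{cor:dense} yields $\underline\chi(\Lambda_\ell)<\beta$ and $\overline\chi(\Lambda_\ell)>\gamma$ for all sufficiently large $\ell$, so the characterization of $\cE_\Lambda$ as a maximal entropy recorded in~\eqref{dimfors} gives $\cE_\ell(\beta)\ge 0$ and $\cE_\ell(\gamma)\ge 0$ (entropies being non-negative). Unwinding the definitions, this produces the estimates
\[
\cP_\ell(q)+q\alpha\ge q(\alpha-\beta)\ \text{ for }\ q\ge 0,\qquad
\cP_\ell(q)+q\alpha\ge |q|(\gamma-\alpha)\ \text{ for }\ q\le 0.
\]
Choosing $Q>h/\min(\alpha-\beta,\gamma-\alpha)$, both right-hand sides strictly exceed $h\ge\cE(\alpha)\ge\cE_\ell(\alpha)$ whenever $|q|>Q$, so the infimum defining $\cE_\ell(\alpha)$ must be attained on $[-Q,Q]$. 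Applying the same reasoning to $\cP$ itself, using $\cE(\beta),\cE(\gamma)\ge 0$ from Lemma~\ref{Echarac}, shows the infimum defining $\cE(\alpha)$ is likewise attained on $[-Q,Q]$.

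The conclusion now follows from Proposition~\ref{unifconvprop}: letting $\varepsilon_\ell\eqdef\sup_{|q|\le Q}|\cP_\ell(q)-\cP(q)|$, we have $\varepsilon_\ell\to 0$ and
\[
\cE_\ell(\alpha)=\inf_{|q|\le Q}\bigl(\cP_\ell(q)+q\alpha\bigr)\ge\inf_{|q|\le Q}\bigl(\cP(q)+q\alpha\bigr)-\varepsilon_\ell=\cE(\alpha)-\varepsilon_\ell,
\]
which combined with $\cE_\ell(\alpha)\le\cE(\alpha)$ forces $\cE_\ell(\alpha)\to\cE(\alpha)$. The main obstacle is the uniform-in-$\ell$ localization of the infima onto a common compact interval; this rests on the convergence of the extremal exponents supplied by Theorem~\ref{cor:dense} together with the fact that, for large $\ell$, every interior exponent in $(\underline\chi(\Lambda_\ell),\overline\chi(\Lambda_\ell))$ is realized by a measure on $\Lambda_\ell$ of non-negative entropy, which controls $\cP_\ell(q)+q\alpha$ linearly as $|q|\to\infty$.
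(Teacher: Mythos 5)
Your argument is correct. It is close in spirit to the paper's proof: both establish the easy inequality from $\cP_\ell\le\cP$ (in the paper, from monotonicity of the nested sequence) and both obtain the reverse inequality by localizing the comparison to a fixed compact interval of $q$'s and then invoking the uniform convergence of Proposition~\ref{unifconvprop}. The difference lies in the localization device. The paper works directly with the lines: it considers the set $J_\ell=\{q\colon \cP_\ell(q)\le\cF_\alpha(q)-\varepsilon\}$, notes it is a closed interval by convexity, bounds $J_{\ell_0}$ for one $\ell_0$ with $\alpha\in(\underline\chi(\Lambda_{\ell_0}),\overline\chi(\Lambda_{\ell_0}))$ using the supporting-line property of $\cP_{\ell_0}$ (Proposition~\ref{convpropforbasic}), and uses monotonicity $J_\ell\subset J_{\ell_0}$. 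You instead pass to the intercepts $\cE_\ell(\alpha)$ and trap the infimum defining the Legendre transform in a single interval $[-Q,Q]$ uniformly in $\ell$, by exploiting nonnegativity of $\cE_\ell$ at two exponents $\beta<\alpha<\gamma$ (i.e., nonnegative entropy of measures on $\Lambda_\ell$ with those exponents), which yields the linear lower bounds $\cP_\ell(q)+q\alpha\ge q(\alpha-\beta)$ for $q\ge0$ and $\ge|q|(\gamma-\alpha)$ for $q\le0$. Your version is more quantitative and does not need monotonicity of the sequence $\cP_\ell$ (only $\cP_\ell\le\cP$), at the mild cost of invoking the realization of interior exponents by measures on basic sets (via~\eqref{dimfors} or Proposition~\ref{convpropforbasic}); the paper's version gets by with convexity and a single supporting line. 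Both hinge on Theorem~\ref{cor:dense} to place the relevant exponents inside $(\underline\chi(\Lambda_\ell),\overline\chi(\Lambda_\ell))$ for large $\ell$, and your final step, including the observation that convergence of the intercepts gives (even uniform) convergence of the lines, is sound.
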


  \begin{proof} It follows easily from the monotone convergence established in the proof of  Proposition~\ref{unifconvprop} that the sequence $\cF_{\ell,\alpha}$ is nondecreasing and $$\lim_{\ell \to \infty} \cF_{\ell,\alpha} \leq \cF_\alpha.$$
  We now show that for any $\varepsilon > 0$ we have  $\cF_{\ell,\alpha} \geq \cF_\alpha - \varepsilon$ for all large enough $\ell$. It will suffice to show that $\cP_\ell \geq \cF_\alpha - \varepsilon$ for all large enough $\ell$. Let
  $$
  J_\ell = \big\{q\colon \cP_\ell(q) \leq \cF_\alpha(q) - \varepsilon\big\}.
  $$
  Since $\cP_\ell$ is convex and $\cF_\alpha$ is linear, $J_\ell$ is a closed interval. By
 Theorem~\ref{cor:dense}, we can choose $\ell_0$ such that $\alpha \in (\underline\chi(\Lambda_{\ell_0}), \overline\chi(\Lambda_{\ell_0}))$. It then follows from (\ref{PL2}) of Proposition~\ref{convpropforbasic} that $J_{\ell_0}$ is bounded. For $\ell \geq \ell_0$, we have $J_\ell \subset J_{\ell_0}$ since $\cP_\ell \geq \cP_{\ell_0}$. Hence $\cP_\ell \geq \cF_\alpha - \varepsilon$ outside $J_{\ell_0}$ for all $\ell \geq \ell_0$.

 On the other hand, since $\cP_\ell \to \cP$ uniformly on compact sets by Proposition~\ref{unifconvprop}, we have $\cP_\ell \geq \cP - \varepsilon$ on $J_{\ell_0}$ for all large enough $\ell$.  But $\cP \geq \cF_\alpha$, so we obtain $\cP_\ell \geq   \cF_\alpha - \varepsilon$ on $J_{\ell_0}$ for all large enough $\ell$.
\begin{figure}
\begin{minipage}[t]{\linewidth}
\centering
	\begin{overpic}[scale=.90,%grid,tics=10
  ]{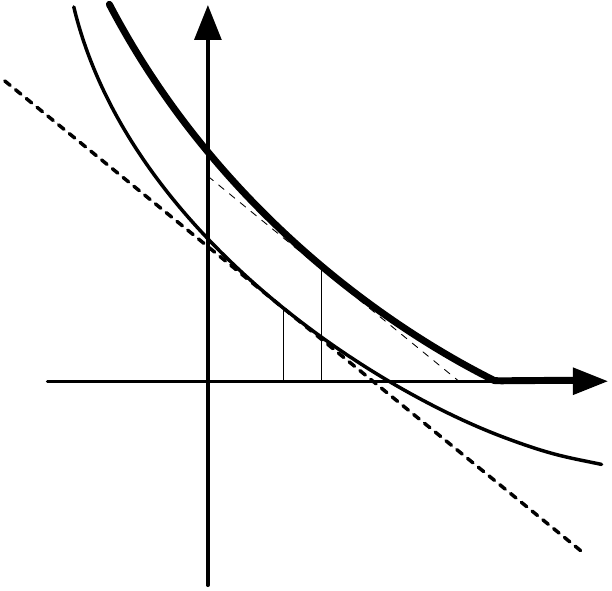}
	\put(101,31){$q$}	
	\put(70,50){$\cP(q)$}	
	\put(41,25){$q(\alpha)$}	
	\put(105,3){$\cF_{\ell,\alpha}(q)$} %h_{\mu_{q(\alpha)}}(G)-q\alpha$}	
	\put(105,19){$\cP_{\ell}(q)$}%(-q\varphi^u)$}	
\end{overpic}
\end{minipage}
\caption{Approximation of pressure by the pressure on a basic sets $\Lambda_\ell\subset T^1M$}\label{fig.map4}
\end{figure}
 \end{proof}

Let us now provide the proof of Proposition~\ref{nonemptyprop} stating that $\cL(\alpha) \neq \emptyset$ for $\alpha \in [\underline\chi,\overline\chi]$.

\begin{proof}[Proof of Proposition~\ref{nonemptyprop}]
	Given a basic set $\Lambda\subset T^1M$, with~\cite[Corollary 5.1]{PesSad:01} the range of Lyapunov exponents of Lyapunov regular vectors in $\Lambda$ is  the closed interval $[\underline\chi(\Lambda),\overline\chi(\Lambda)]$.
	For every $\alpha\in(\underline\chi,\overline\chi)$, by Theorem~\ref{cor:dense}  there exists a basic set $\Lambda_\ell$, $\ell\ge1$, such that $\alpha\in(\underline\chi(\Lambda_\ell),\overline\chi(\Lambda_\ell))$ and hence $\cL(\alpha)\supset\cL(\alpha)\cap\Lambda_\ell\ne\emptyset$.
	Finally $\cL(\underline\chi)$, $\cL(\overline\chi)\ne\emptyset$ by Corollary~\ref{expmeas}.
\end{proof}

We can now prove Theorem~\ref{mainthm} stating that $\dim_{\rm H}\cL(\alpha) \geq 1+ 2\,\cD(\alpha)$ and $h(\cL(\alpha)) \geq \cE(\alpha)$ for $\alpha \in (\underline\chi,\overline\chi)$.

\begin{proof}[Proof of Theorem~\ref{mainthm}]
It follows from Proposition~\ref{lineconv} that $\cD_\ell \to \cD$ and $\cE_\ell \to \cE$ as $\ell \to \infty$. Since  $\dim_{\rm H}(\cL(\alpha) \cap \Lambda_\ell)$ and $h(\cL(\alpha) \cap \Lambda_\ell)$ are lower bounds for
  $\dim_{\rm H}\cL(\alpha)$ and $h(\cL(\alpha))$, it follows immediately from Proposition~\ref{unifconvprop} that
  $$
  \text{$\dim_{\rm H}\cL(\alpha) \geq \cD(\alpha)$\quad and \quad $h(\cL(\alpha)) \geq \cE(\alpha)$.}
  $$
  This proves the theorem.
\end{proof}

We can also provide the proof of Theorem~\ref{lem:lowe} stating that the entropies $h(\Lambda)$ of basic sets $\Lambda\subset T^1M$ are dense in $[0,h]$.

\begin{proof}[Proof of Theorem~\ref{lem:lowe}]
We distinguish two cases:\\[0.1cm]
\smallskip
\textbf{Case $\cH\ne\emptyset$:} By the variational principle for entropy, for any small $\delta>0$ there exists $\mu\in\cM_{\rm e}$ such that $h(\mu)\ge h-\delta/2$, which in particular implies  that $\mu$ is hyperbolic. By Proposition~\ref{lem:horse}, there exists a basic set $\Lambda$ such that $h(\Lambda)\ge h(\mu)-\delta/2\ge h-\delta$.  By Ruelle's inequality, in particular we have $\overline\chi(\Lambda)\ge h-\delta$.
  By Proposition~\ref{lem:exist-1}, there exists a closed orbit $\cO(v)$ through a vector  $v\in\cR$ with Lyapunov exponent $0<\chi(v)<\delta$. By Proposition~\ref{p:p5.3}, there exists a basic set $\widehat\Lambda$ that contains $\Lambda\cup \cO(v)$ and hence satisfies
  \[
	  \underline\chi(\widehat\Lambda)
  	\in (0, \delta),\quad
	\overline\chi(\widehat\Lambda)\ge h-\delta,\quad\text{ and }\quad
	h(\widehat\Lambda)\ge h-\delta.
  \]
In particular (compare Section~\ref{sec:3.1basicsets}), we conclude that the range of the entropy spectrum $\alpha\mapsto h(\cL(\alpha)\cap\widehat\Lambda)$ contains the interval $[\delta,h(\Lambda)]$ and for any value $h'\in (\delta,h-\delta)$
there exist a number $q$ and the equilibrium state $\mu_q$ of $q \varphi^u$ with respect to ${G}|_{\widehat\Lambda}$ satisfying
\[
	h_{\mu_q} = P(q \varphi^u,\widehat\Lambda\,) - q\,\chi(\mu_q) = h'.
\]
Now applying Proposition~\ref{lem:horse} one more time finishes the proof.\\[0.1cm]
\textbf{Case $\cH=\emptyset$:} Choose a nonpositive H\"older continuous function $\varphi\colon T^1M \to \bR$ that is $0$ on one closed orbit $\cO$ and negative elsewhere and consider the equilibrium states for the functions $q\,\phi$ for $q \geq 0$. The function $q\mapsto P(q\varphi,\Lambda)$ is real analytic and strictly convex function and there is a unique equilibrium state for each $q$. The entropy of this equilibrium state is equal $h(\Lambda)$ for $q = 0$ and decreases as $q$ increases. Note that the measure $\mu$ that is supported on the periodic orbit $\cO$ satisfies $P(q\varphi,\cO)=0$ for all $q$. Hence the pressure is nonnegative for all $q$. For large $q$, the integral of $\varphi$ with respect to equilibrium measure for $q\varphi$ must be small. Weak star compactness and the fact that the closed orbit where $\phi$ vanishes supports the only measure for which the integral of $\varphi$ is $0$ mean that the equilibrium state converges to this measure as $q \to \infty$. Then upper semicontinuity of the measure entropy (which follows from expansivity) forces the entropies of the equilibrium states to approach $0$.
\end{proof}

\bibliographystyle{amsplain}

\begin{thebibliography}{11}
\bibitem{Abr:59} L.~M.~Abramov, \emph{On the entropy of a flow}, Doklad. Acad. Nauk. \textbf{128} (1959), 873--875.

\bibitem{Ano:} D.~Anosov, private communications.

\bibitem{Bal:82} W.~Ballmann, \emph{Axial isometries of manifolds of non-positive curvature}, Math. Annal. \textbf{259} (1982), 131--144.

\bibitem{BarDou:04} L.~Barreira and P.~Doutor, \emph{Birkhoff averages for hyperbolic flows: variational principles and applications}, J. Statist. Phys. \textbf{115} (2004), 1567--1603.

\bibitem{Bow:73b} R.~Bowen, \emph{Topological entropy for noncompact sets}, Trans. Amer. Math. Soc. \textbf{184} (1973), 125--136.
    
\bibitem{BowRue:75} R.~Bowen and D.~Ruelle, \emph{The ergodic theory of Axiom A flows}, Invent. Math. \textbf{29} (1975), 181--202.

\bibitem{CouSha:} Y.~Coudene and B.~Schapira, \emph{Generic measures for hyperbolic flows on no-compact spaces}, Israel. J. Math. \textbf{179} (2010), 157--172.

\bibitem{Cro:02} S.~Crovisier, \emph{Une remarque sur les ensembles hyperboliques localement maximaux}, C. R. Math. Acad. Sci. Paris \textbf{334}  (2001), 401--404.

\bibitem{Ebe:73b} P.~Eberlein, \emph{When is a geodesic flow of Anosov-type?}, J. Diff. Geom. \textbf{8} (1973), 437--463.

\bibitem{Ebe:73trans} P.~Eberlein, \emph{Geodesic flows on negatively curved manifolds}, Trans. Amer. Math. Soc.  \textbf{178} (1973), 57--82.

\bibitem{Ebe:96} P.~Eberlein, \emph{Geometry of Nonpositively Curved Manifolds},  Chicago Lectures in Mathematics, University of Chicago Press, Chicago, IL, 1996.

\bibitem{EbeNei:73} P.~Eberlein and B.~O'Neill, \emph{Visibility manifolds}, Pacific J. Math. \textbf{46} (1973), 45--109.

\bibitem{Fis:06} T.~Fisher, \emph{Hyperbolic sets that are not locally maximal}, Ergodic Theory Dynam. Systems \textbf{26} (2006), 1491-1509.
 
\bibitem{Gro:78} M.~Gromov, \emph{Manifolds of negative curvature}, J. Diff. Geom. \textbf{13} (1978), 223--230.

\bibitem{Ito:69} S.~Ito, \emph{On the topological entropy of a dynamical system}, Proc. Japan Acad. \textbf{45} (1969), 383--840.

\bibitem{Kat:80} A.~Katok, \emph{Lyapunov exponents, entropy and periodic orbits for diffeomorphisms}, Publ. Math. Inst. Hautes \'Etudes Sci. \textbf{51} (1980), 137--173.

\bibitem{Kat:82} A.~Katok, \emph{Entropy and closed geodesics},  Ergodic Theory Dynam. Systems  \textbf{2}  (1982),   339--365.

\bibitem{Kat:84} A.~Katok, \emph{Nonuniform hyperbolicity and structure of smooth dynamical systems}, \emph{(Warszawa, 1983) (Proceedings of the International Congress of Mathematicians)}. Eds. Z.~Ciesielski and C.~Olech, North-Holland, 1984, pp. 1245--1253.

\bibitem{KatHas:95} A.~Katok and B.~Hasselblatt, \emph{Introduction to the Modern Theory of Dynamical Systems},  Encyclopedia of Mathematics and Its Applications \textbf{54}, Cambridge: Cambridge Univ. Press, 1995.
    
\bibitem{Kni:98} G.~Knieper, \emph{The uniqueness of the measure of maximal entropy for geodesic flows on rank $1$ manifolds}, Ann. of Math. (2) \textbf{148} (1998), 291--314.
    
\bibitem{New:88} S.~Newhouse, \emph{Entropy and volume},   Ergodic Theory Dynam. Systems \textbf{8}   (1988),  283--299.

\bibitem{New:89} S.~Newhouse, \emph{Continuity of the entropy}, Ann. of Math. (2) (1989), 215--235.

\bibitem{Ohn:80} T.~Ohno, \emph{A weak equivalence and topological entropy}, Publ. Res. Inst. Math. Sci. \textbf{16} (1980), 289--298.

\bibitem{Pes:77} % proof of ergodicity of $\widetilde m$
	Ya.~Pesin, \emph{Geodesic flows in closed Riemannian manifolds without focal points}, Izv. Acad. Nauk SSSR Ser. Mat. \textbf{41} (1977), 1252--1288.

\bibitem{Pes:97} Ya.~Pesin, \emph{Dimension Theory in Dynamical Systems: Contemporary Views and Applications}, Chicago Lectures in Math., Chicago University Press, 1997.

\bibitem{PesPit:84} Ya.~Pesin and B.~Pitskel, \emph{Topological pressure and the variational principle for noncompact sets}, Funct. Anal. Appl. \textbf{18} (1984), 307--318.

\bibitem{PesSad:01}  Ya.~Pesin and V.~Sadovskaya, \emph{Multifractal analysis of conformal axiom A flows}, Commun. Math. Phys. \textbf{216} (2001), 277--312.

\bibitem{Wal:81}  P.~Walters, \emph{An Introduction to Ergodic Theory},
  Graduate Texts in Mathematics 79, Springer, 1981.

\end{thebibliography}

\end{document}